\newtheorem{theorem}{Theorem}
\newtheorem{corollary}[theorem]{Corollary}
\newtheorem{definition}[theorem]{Definition}
\newtheorem{example}[theorem]{Example}
\newtheorem{lemma}[theorem]{Lemma}
\newtheorem{proposition}[theorem]{Proposition}
\newtheorem{remark}[theorem]{Remark}
\numberwithin{theorem}{section}
\def\text{\mbox}
\def\ra{\rightarrow}
\def\bi{\begin{itemize}}
\def\ei{\end{itemize}}
\def\bem{\begin{emuerate}}
\def\eem{\end{enumerate}}
\def\beq{\begin{eqnarray*}}
\def\eeq{\end{eqnarray*}}
\def\tb{\textbf}
\def\ti{\textit}
\def\N{\mathbb{N}} 
\def\Z{\mathbb{Z}} 
\def\R{\mathbb{R}} 
\def\P{\mathbb{P}} 
\def\E{\mathbb{E}} 
\def\T{\mathbb{T}} 
\def\1{\mathbb{1}}
\def\actsl{\curvearrowright}
\newcommand*\Laplace{\mathop{}\!\mathbin\bigtriangleup}
\title{Global Geometry within an SPDE Well-Posedness Problem}
\author{Hongyi Chen \\ hchen238@uic.edu \and Cheng Ouyang \\ couyang@uic.edu}
\begin{document}

\maketitle

\begin{abstract}
    \noindent On a closed Riemannian manifold, we construct a family of intrinsic Gaussian noises indexed by a regularity parameter $\alpha\geq0$ to study the well-posedness of the parabolic Anderson model. We show that with rough initial conditions, the equation is well-posed assuming non-positive curvature with a condition on $\alpha$ similar to that of Riesz kernel-correlated noise in Euclidean space.  Non-positive curvature was used to overcome a new difficulty introduced by non-uniqueness of geodesics in this setting, which required exploration of global geometry. The well-posedness argument also produces exponentially growing in time upper bounds for the moments. Using the Feynman-Kac formula for moments, we also obtain exponentially growing in time second moment lower bounds for our solutions with bounded initial condition.
\end{abstract}

\tableofcontents

\section{Introduction}
Let $M$ be a $d$-dimensional compact Riemannian manifold.
We consider the formal Cauchy problem 
\begin{align}\label{PAM}
    \begin{cases}
    \left(\partial_t+\frac{1}{2}\Laplace_M\right)u(t,x)=\beta u(t,x)\cdot {\dot{W}},\quad (t,x)\in \R_+\times M,\\
    u(0,x)=\mu,
    \end{cases}
\end{align}
where $\beta>0$ is a constant, $\Laplace_M=-\text{div}(\text{grad})$ is the Laplace-Beltrami operator (we follow the geometer convention with the negative sign), and $\mu$ is a finite measure on $M$. Equation \eqref{PAM} is usually named the parabolic Anderson model (PAM) in the literature. It arises in a large number of diverse questions in probability theory and mathematical physics. For example, it gives rise to the free energy of the directed polymer and to the Cole-Hopf solution of the KPZ equation \cite{ACQ,BertiniGiacomin97,CorwinKPZ,kardar87,KPZ86,MQR21}; it also has direct connections with the stochastic Burger's equation \cite{carmonaMolchanov,BCJ94} and Majda's model of shear-layer flow in turbulent diffusion \cite{majda93}.
We say that a random field $\{u(t,x)\}_{(t,x)\in \R_+\times M}$ is a mild solution to \eqref{PAM} if it satisfies  \begin{equation}\label{pam:mild}
    u(t,x)=\int_M P_t(x,y)\mu(dy)+\beta \int_0^t\int_M P_{t-s}(x,y)u(s,y)W(dy,ds)=J_0(t,x)+\beta I(t,x)
\end{equation}
where $P_t(x,y)$ is the heat kernel on $M$ and $J_0(t,x):=\int_M P_t(x,y)\mu(dy)$ is the homogeneous solution to the heat equation. 
The second integral $I(t,x)$ is to be understood in the sense of It\^{o}-Walsh \cite{Walsh86}.\\

The It\^{o}-Walsh solution theory for \eqref{PAM} has been successful in $d=1$ with $W$ being space-time white noise since its introduction by Walsh. However, the white noise becomes too singular to apply the Walsh theory when space dimension is greater than 1; and solutions have been constructed in the Stratonovich sense via renormalization techniques (see \cite{HairerLabbe2D,HairerLabbe3D} for constant in time white noise on Euclidean spaces for $d=2,3$, and \cite{BailleulDangMouzard,HairerSingh,ShenZhuZhuPAM} for closed manifolds with constant in time white noise for $d=2,3$). Finer properties have been difficult to study in these scenarios due to the complex regularity structures associated to the system (exceptions being  \cite{GhosalYi,KönigPerkowskiZuijlen}). In particular, there is a lack of literature on the effects of geometry and topology on the properties of the solution.\\

In Euclidean space $\R^d$, \cite{Dalang98} extended the Walsh theory assuming bounded initial condition to $d\geq 2$ and noise white in time and colored in space with homogeneous covariance $G(x,x')=G(x-x')$. Here, the necessary and sufficient condition \begin{equation}\label{Dalang cond Rd}
    \int_{\R^d}\frac{\hat{G}(d\xi)}{1+\abs{\xi}^2}<+\infty,
\end{equation} was given for such noise, where $\hat{G}$ is the  Fourier transform of $G$. Condition \eqref{Dalang cond Rd} is usually referred as Dalang's condition in the literature and understood as a regularity requirement on $W$. The result in \cite{Dalang98} was extended to measure-valued initial conditions in \cite{ChenKim19}. In this setting, many interesting properties such as fluctuations \cite{CSZ17,DunlapGu22AOP,GCL23,KN24,Tao24}, spacial ergodicity\cite{CKNP21}, and intermittency \cite{CH19,ChenKim19}  were established.\\

The present paper arises from the natural question of how the geometry of the underlying space (in our case a Riemannian manifold) would influence the behavior of the solution to equation \eqref{PAM}. Indeed, one expects that the underlying Brownian motion (associated to the Laplace operator) in the space plays an important role in the solution, whereas the movement of a Brownian motion certainly feels the geometry of the space. For example, it is well understood that Dalang's condition ensures the existence of a solution in the It\^{o} sense. When solving \eqref{PAM} on Heisenberg groups, it was shown in \cite{BOTW-Heisenberg} that Dalang's condition appears in the correct form in terms of the Hausdorff dimension instead of the topological dimension of the space. In addition, some new Lyapunov exponents have been revealed in \cite{BCHOTW} for solutions to \eqref{PAM} on metric measure spaces such as metric graphs and fractals. A recent paper \cite{BCO24} also studied \eqref{PAM} on Cartan-Hadamard manifolds. \\

All aforementioned three papers assumed that equation \eqref{PAM} starts from a nice initial condition, in which case solution theory largely relies on good heat kernel estimates. Keeping in mind the connection between the PAM and directed polymers, in this article we are interested in more general measure-valued initial conditions for \eqref{PAM}. Among these is the Dirac delta initial condition, for which solutions of \eqref{PAM} are already known to behave differently than those starting from nice initial conditions \cite{ACQ,CorwinKPZ,MQR21} in $d=1$. As an initial exploration of the connection between the geometry of a manifold and the equation \eqref{PAM}, we restrict our analysis on a compact Riemannian manifold throughout this paper. An immediate difficulty for the exploration in this direction is that one only starts to see interesting geometries of a manifold when the dimension is greater or equal to 2, however the white noise is already too singular to drive \eqref{PAM} in dimension 2 in the It\^{o} sense. To overcome this difficulty, we construct a family of intrinsic noises on manifolds that are white in time and colored in space. Moreover, they are more regular than space-time white noise in space variables so that one can still solve \eqref{PAM} in the framework of It\^{o}-Walsh. As one will see in Section 2 below, the spatial covariance function of our colored noise is a canonical function on the manifold and is the analogue of the Riesz kernel on $\mathbb{R}^d$. Similar constructions also appeared in \cite{COV23,BOTW-Heisenberg,BCHOTW, BCO24}. \\

The main result of this paper can be summarized as follows. More precise statements can be found in Theorem \ref{main result1} and Theorem \ref{lower bound} below.
\begin{theorem}\label{th:intro} Let $W=W_{\alpha,\rho},\alpha,\rho\geq0$ be a the noise given in Definition \ref{def: Colored Noise}.  Assume that Dalang's condition $\alpha>{(d-2)}/{2}$ holds and that $M$ is a compact Riemannian manifold with non-positive sectional curvature.\newline
(1)For any finite measure $\mu$ on $M$, the Cauchy problem (1.1) has a random field solution $\set{u(t,x)}_{t>0,x\in M}$ with the following exponential upper bound for some positive constants $C$ and $\theta$ depending on $p\geq2, \beta$ and $M$, 
    $$\E[|u(t,x)|^p]^{\frac{1}{p}}\leq CJ_0(t,x)e^{\theta t},\quad \mathrm{for\ all}\ t>0.$$
    Here $J_0(t,x)$ is the solution to the homogeneous heat equation starting from $\mu$.  
    
  (2) Suppose $\mu(dx)=f(x)dx$, where $f\in L^\infty(M)$ and $\inf_{x\in M}f(x)\geq \varepsilon>0$. Suppose in addition $\rho>0$. Then there exists a positive constant $c$ such that, $$\E[u(t,x)^2]\geq \varepsilon^2 e^{c\, t},\quad\mathrm{for\ all}\ t>0.$$ 
\end{theorem}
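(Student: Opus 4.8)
\medskip
\noindent\textbf{Proof proposal.}
The plan is to treat the two assertions by different mechanisms: (1) is a well-posedness statement proved by a weighted Picard iteration whose contraction estimate is exactly where the curvature hypothesis is spent, while (2) is a soft lower bound obtained from a second-moment Feynman--Kac formula together with Jensen's inequality and the spectral gap of $\Laplace_M$. For part (1), granting that $W=W_{\alpha,\rho}$ defines a bona fide Walsh martingale measure under Dalang's condition $\alpha>(d-2)/2$ (this is where that condition is used at the level of the \emph{noise}, established together with Definition~\ref{def: Colored Noise}), I would run the Picard scheme $u_0(t,x)=J_0(t,x)$ and $u_{n+1}(t,x)=J_0(t,x)+\beta\int_0^t\!\int_M P_{t-s}(x,y)u_n(s,y)\,W(dy,ds)$ in the Banach space with weighted norm $\mathcal N_{p,\theta}(v)=\sup_{t>0,\,x\in M}J_0(t,x)^{-1}e^{-\theta t}\|v(t,x)\|_{L^p(\Omega)}$ (here $J_0>0$ on $(0,\infty)\times M$ as soon as $\mu\ne0$). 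By Burkholder--Davis--Gundy, the It\^o--Walsh isometry and Minkowski's inequality (dominating $G_{\alpha,\rho}$ by a positive kernel of the same type if need be), the increments $\mathcal N_{p,\theta}(u_{n+1}-u_n)$ are controlled through the deterministic quantity
\[
\mathcal Q_\theta(t,x):=\int_0^t\!\int_M\!\int_M P_{t-s}(x,y)P_{t-s}(x,z)\,J_0(s,y)J_0(s,z)\,e^{2\theta s}\,G_{\alpha,\rho}(y,z)\,dy\,dz\,ds,
\]
and the whole argument reduces to proving a bound of the form $\mathcal Q_\theta(t,x)\le\Phi(\theta)\,J_0(t,x)^2e^{2\theta t}$ with $\Phi(\theta)\to0$ as $\theta\to\infty$: for $\theta$ large this makes $v\mapsto\beta I[v]$ a contraction, yielding a mild solution (unique in this class), and since the scheme starts from $u_0=J_0$ one reads off $\|u(t,x)\|_p\le C_p J_0(t,x)e^{\theta t}$.

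The hard part is the estimate on $\mathcal Q_\theta$, and this is where global geometry enters. Writing $J_0(s,y)=\int_M P_s(y,w)\mu(dw)$ and using Fubini, it suffices to prove, uniformly in $x,w,w'\in M$ and with an $s$-weight that gains in $\theta$ after integration,
\[
\int_M\!\int_M P_{t-s}(x,y)P_{t-s}(x,z)\,P_s(y,w)\,P_s(z,w')\,G_{\alpha,\rho}(y,z)\,dy\,dz\;\le\;K(s,t-s)\,P_t(x,w)\,P_t(x,w'),
\]
a Chapman--Kolmogorov-type identity in which the insertion of $G_{\alpha,\rho}$ costs only a weight $K$ with $\int_0^tK(s,t-s)e^{-2\theta(t-s)}ds\to0$. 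Two ingredients are needed. First, a ``heat-kernel times Riesz-kernel'' bound $\sup_y\int_M P_\tau(x,z)G_{\alpha,\rho}(y,z)\,dz\lesssim\tau^{-\kappa}$ with $\kappa<1$; using the spectral form of $G_{\alpha,\rho}$ and Weyl's law $\lambda_n\asymp n^{2/d}$ one checks that $\kappa<1$ holds precisely when $\alpha>(d-2)/2$, so Dalang's condition is consumed here as well, and this step needs no curvature. Second, and this is the crux, one must split $P_{t-s}(x,y)P_s(y,w)\lesssim P_t(x,w)\cdot\Pi^{x,w}(y)$ where $\Pi^{x,w}$ is a probability density in $y$ concentrated near the minimizing geodesic from $x$ to $w$ with Gaussian transverse decay; proving this from the Gaussian bound $P_\tau(p,q)\le C\tau^{-d/2}e^{-d(p,q)^2/(C\tau)}$ requires controlling $d(x,y)^2+d(y,w)^2-d(x,w)^2$ along near-minimizers, a convexity/triangle property that fails on a general compact manifold near conjugate points and the cut locus, where minimizing geodesics are not unique. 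Non-positive sectional curvature is exactly what cures this: one lifts to the universal cover $\widetilde M$, a Cartan--Hadamard manifold on which $d_{\widetilde M}^2$ is convex (Hessian comparison) and minimizing geodesics are unique, proves the splitting there, and transfers it down via $P^M_\tau(x,y)=\sum_{\gamma\in\Gamma}P^{\widetilde M}_\tau(\tilde x,\gamma\tilde y)$, the sum over the deck group $\Gamma$ converging uniformly because the Gaussian decay of $P^{\widetilde M}$ beats the at-most-exponential volume growth of $\widetilde M$. I expect the main obstacle to be precisely this transfer --- organizing all error terms so that they stay summable over $\Gamma$ uniformly in $(x,w,w')$ --- which is the ``exploration of global geometry'' the abstract refers to.

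For part (2), I would invoke the Feynman--Kac representation for the second moment (legitimate here since $f\in L^\infty$ and, with $\rho>0$, $G_{\alpha,\rho}$ is mild enough; if necessary obtained by mollifying $W$ and passing to the limit with the help of the moment bound of part (1)): with $B^1,B^2$ independent Brownian motions on $M$ generated by $\tfrac12\Laplace_M$ and started at $x$,
\[
\E[u(t,x)^2]=\E^{x,x}\!\Big[f(B^1_t)f(B^2_t)\exp\Big(\beta^2\!\int_0^t G_{\alpha,\rho}(B^1_s,B^2_s)\,ds\Big)\Big]\;\ge\;\varepsilon^2\,\E^{x,x}\!\Big[\exp\Big(\beta^2\!\int_0^t G_{\alpha,\rho}(B^1_s,B^2_s)\,ds\Big)\Big],
\]
using $f\ge\varepsilon$. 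By Jensen's inequality the right-hand side is $\ge\varepsilon^2\exp\big(\beta^2\int_0^t\E^{x,x}[G_{\alpha,\rho}(B^1_s,B^2_s)]\,ds\big)$. Expanding $G_{\alpha,\rho}$ in the eigenbasis $\{\phi_n\}$ of $\Laplace_M$ with eigenvalues $0=\lambda_0<\lambda_1\le\cdots$ and using $\E^x[\phi_n(B_s)]=e^{-s\lambda_n/2}\phi_n(x)$, one obtains $\E^{x,x}[G_{\alpha,\rho}(B^1_s,B^2_s)]=\sum_n(\rho+\lambda_n)^{-\alpha}e^{-s\lambda_n}\phi_n(x)^2$, a series of nonnegative terms whose $n=0$ contribution is the strictly positive constant $\rho^{-\alpha}\phi_0^2=\rho^{-\alpha}/\mathrm{Vol}(M)$ --- and this is exactly where $\rho>0$ is indispensable, since for $\rho=0$ the zero mode is projected out and the exponent's mean no longer grows linearly in $t$. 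Consequently $\int_0^t\E^{x,x}[G_{\alpha,\rho}(B^1_s,B^2_s)]\,ds\ge\rho^{-\alpha}t/\mathrm{Vol}(M)$, hence $\E[u(t,x)^2]\ge\varepsilon^2 e^{ct}$ with $c=\beta^2\rho^{-\alpha}/\mathrm{Vol}(M)$. The only genuinely nontrivial point here is the rigorous justification of the Feynman--Kac formula under the stated hypotheses; granting that, the rest is Jensen's inequality and the spectral decomposition of $\Laplace_M$.
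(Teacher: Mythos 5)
For part (2) your route coincides with the paper's: Feynman--Kac for the second moment, Jensen, spectral expansion, and the key observation that the zero mode gives the linear-in-$t$ term so $\rho>0$ is indispensable. One slip is the spectral form of the kernel: by Definition~\ref{def: Colored Noise} and \eqref{def: G_alpha etc} one has $G_{\alpha,\rho}(x,y)=\rho/m_0+\sum_{n\geq1}\lambda_n^{-\alpha}\phi_n(x)\phi_n(y)$, so $\E^{x,x}[G_{\alpha,\rho}(B^1_s,B^2_s)]=\rho/m_0+\sum_{n\geq1}\lambda_n^{-\alpha}e^{-2\lambda_ns}\phi_n(x)^2$, not $\sum_n(\rho+\lambda_n)^{-\alpha}e^{-s\lambda_n}\phi_n(x)^2$; the conclusion is unchanged, but it is worth getting the kernel right since the paper also needs $G_{\alpha+1}(x,x)<\infty$ from Proposition~\ref{Prop: G_alpha}.

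For part (1) you identify the right reduction --- a weighted Picard fixed point reduces to a deterministic estimate on a quantity of the type $\mathcal Q_\theta$, which is essentially the integral \eqref{main quantity for iteratin} --- and the right ingredients (Gaussian heat-kernel bounds, Cartan--Hadamard universal cover, Dalang's condition governing the time-integrability), but the mechanism you propose is genuinely different from the paper's. You would expand $P^M_\tau$ as a deck-group sum $\sum_{\gamma\in\Gamma}P^{\widetilde M}_\tau(\tilde x,\gamma\tilde y)$, prove a splitting on $\widetilde M$ via convexity of $d^2$, and resum. The paper never expands the heat kernel: it uses the direct bound of Lemma~\ref{th: heat kernel upper bound} on $M$, isolates the function $F_a(z,x,y)$ in \eqref{def: F}, proves uniform finiteness of the geodesic count $N_{L}(x,y)$ (Lemma~\ref{Finite geodesic count}), partitions $M$ into the sausages of Definition~\ref{sausages and restricted balls}, and in each sausage lifts the single triangle $\triangle xyz$ to $\overline M$ where CAT(0) comparison gives $F_a(z,x,y)\geq d(z,\gamma_i(a))^2$ (Lemma~\ref{behavior of $F$ in sausages}). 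Your deck-sum approach would have to control the cross-terms $\gamma\ne\gamma'$ in products of two such sums against the Riesz kernel uniformly in $x,w,w'$, which is precisely the bookkeeping the sausage decomposition is designed to avoid. You also misattribute where Dalang's condition $\alpha>(d-2)/2$ is used: the noise is a bona fide Walsh measure for any $\alpha\geq0$; the condition enters only to make $k(s)\lesssim 1+s^{(2\alpha-d)/2}$ locally integrable in $s$, i.e.\ in the kernel estimate (Theorems~\ref{theorem:large time L1 bound} and~\ref{theorem:short time L1 bound}), not at the level of the noise. As written, the crucial estimate on $\mathcal Q_\theta$ is identified but not carried out; the sausage/CAT(0) lemma is the missing piece you would need to supply.
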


The exponential growth of moments in time has been linked to the study of intermittency, which is the presence of high peaks in the graph of the solution \cite{BertiniCancrini,carmonaMolchanov,khoshnevisan14,molchanov91}. For $d=1$ space-time white noise, \cite{BorodinCorwin14} gave a formula for the second moment starting from the Dirac delta initial condition using discrete approximations, which was re-proven in \cite{ChenThesis,ChenDalangAOP} using stochastic analysis, as a special case of a general result for measure-valued initial conditions. \cite{ChenThesis,ChenDalangAOP} also proved exponentially growing $p-$th moment upper bounds, which were extended to $d\geq 2$ in \cite{ChenKim19} with noise white in time and homogeneously colored in space assuming \eqref{Dalang cond Rd}. On compact manifolds, \cite{TindelViens} showed an exponentially growing in time almost sure upper bound hold for nice noise and uniform initial condition, which hints that intermittency is a local property. In \cite{BOTW-Heisenberg}, second moment upper and lower bounds were proven for bounded initial conditions on the sub-Riemannian Heisenberg group. For measure-valued initial conditions, $p-$th moment upper bounds and second moment lower bounds were shown in bounded domains in Euclidean space \cite{CCL23} and the Torus $\T^d$ \cite{COV23}, following the ideas of \cite{ChenThesis,ChenDalangAOP}. \\

Finally, let us briefly explain the main idea of our approach and where the curvature condition is used in order to prove Theorem \ref{th:intro}. We take the iteration procedure developed in \cite{ChenThesis,ChenDalangAOP,CJKS11} which study the PAM with measure-valued initial condition. An observation made in \cite{COV23} is that the success of their iteration procedure hinges on a careful analysis of the following integral,
\begin{align}\label{main quantity for iteratin}\int_0^t ds\int_{M^2}dzdz' P_{t,x_0,x}(s,z)P_{t,x_0',x'}(s,z')\bm{G}_{\alpha,\rho}(z,z'),\end{align}
where $P_{t,x,y}(s,z)$ is the density of the Brownian bridge that starts at $x$ and reaches $y$ at time $t$, and $\bm{G}_{\alpha,\rho}$ is the spatial covariance function of the noise.
Clearly a proper estimate of the above integral requires a good understanding of how the measure of a Brownian bridge is concentrated for all time $t$ and $0<s<t$ and for all $x$ and $y$. Since the density of a Brownian bridge can be expressed in terms of the heat kernel,
$$P_{t,x,y}(s,z)=\frac{P_{s}(x,z)P_{t-s}(z,y)}{P_t(x,y)},$$
and one usually expects a Gaussian type heat kernel estimate (see  Lemma \ref{th: heat kernel upper bound} below), the concentration of the measure of a Brownian bridge is thus controlled by the interplay of three distance functions (coming from the exponential terms in the heat kernel estimates),
  \begin{align}\label{func: F}F_{s,t;x,y}(z):=-\frac{\bm{d}(x,y)^2}{2t}+\frac{\bm{d}(x,z)^2}{2s}+\frac{\bm{d}(z,y)^2}{2(t-s)}.\end{align}

This is where global geometry enters and imposes the main difficulty for our analysis. Indeed, it is not hard to see that $F$ takes its minimum when $z$ lies on geodesics connecting $x$ and $y$. Hence, most of the measure is concentrated around the minimizer on the geodesic, especially for small $t$.  When $x$ and $y$ are in the cut-locus of each other, there are multiple (possibly infinitely many) distance minimizing geodesics connecting $x$ and $y$ making the analysis of $F$ not easily accessible. In order to tackle this difficulty, we assume throughout our discussion that the sectional curvature of $M$ is non-positive. This curvature condition ensures that there are only finitely many distance minimizing geodesics between an two points, which simplifies the analysis; it also allows to give a careful analysis of $F$ by comparing triangles on $M$ to Euclidean triangles. To our best knowledge, this is the first instance of global geometry appearing in the study of well-posedness for a linear differential equation of this type. The exponential lower bound of the second moment of the solution stated in Theorem \ref{th:intro}-(2) is due to the compactness of the manifold $M$; thus the Brownian motion is ergodic. The extra assumption of a nice initial condition allows us to use the Feynman-Kac formula for the second moment.\\

It is not clear at the moment whether the non-positive curvature condition assumed in Theorem \ref{th:intro} is only a technical condition or not. However, from the analysis below, we expect that Dalang's condition might take a different form when $M$ is a sphere given that the measure of Brownian bridge concentrates around lines of latitude (as opposed to finite many points under the non-positive curvature condition) when $t$ is small and when $x$ and $y$ are antipodal points. This will be investigated in a subsequent work. In addition, we think our approach is quite general and is robust enough to be extended to study the PAM with measure-valued initial data in other complex spaces, such as fractals. \\

The rest of the paper is organized as follows. In Section 2, we construct a family of colored noises on $M$ that are smoother than the white noise. In Section 3, we introduce the iteration procedure developed \cite{CJKS11,ChenThesis,ChenDalangAOP} and analyze the integral in \eqref{main quantity for iteratin}. Along the way, we identify the specific geometric difficulty mentioned above: the analysis of the function $F$ given in \eqref{func: F}. All is then related to the geometry of geodesics because the Brownian bridge in short time sees the number of minimizing geodesics (see \cite{hsu90} for a large deviation characterization of this statement), and the fact that they are finite for non-positively curved manifolds gives us a handle on it. The execution of this intuition is laid out in Section 3.2. It requires precise use of the geometry and topology of negatively curved spaces and is the core of the paper. Once all the estimates needed for the iteration are in place, the well-posedness and moment bound of the solution follow similarly to the Euclidean case, which is the content of Section 4.  Finally in Section 5, we use Feynman-Kac formula for moments and the structure of our noise to produce a lower bound which grows exponentially in time, which strengthens the belief that the solution is intermittent on all compact manifolds.

\bigskip

We list here some conventions and notations we employ in the rest of the paper.
\begin{itemize}
    \item  We follow convention and use $C_1,C_2,C_3$  and $c_1,c_2$ etc. to denote generic constants that are independent of quantities of interest. We will also use $C_M$ to denote a constant depending on $M$. The exact values of these constants may change from line to line.
    \item For $x\in M$ and $r>0$, $B(x,r)$ will denote the geodesic ball of radius $r$ centered at $x$.
    \item $B_{\R^d}(r)$ will be a ball of radius $r>0$ in $\R^d$.
    \item $m_0=\int_M dx$ will be the volume of the manifold.
    \item $i_M>0$ will be the injectivity radius of $M$. We will also fix a constant $\delta=i_M/8$.
    \item $\bm{d}(x,y)$ will denote the distance between $x,y\in M$.
\end{itemize}

\section{Colored Noise on Compact Riemannian Manifolds}

In order to construct a (centered) Gaussian noise on $M$ smoother than the white noise, one essentially needs a positive-definite function $\mathbf{G}(x,y)$ on $M\times M$ that is less singular than the Dirac delta on diagonal $D=\{(x,x); x\in M\}$. When $M=\mathbb{R}^d$, such functions can be obtained through Fourier transforms, thanks to Bochner's theorem. On a compact manifold $M$, the spectral decomposition of the Laplace-Beltrami operator (which corresponds to the "Fourier transform" on $M$) becomes handy. In this section, we construct an intrinsic family of Gaussian noises on $M$ that we call \textit{colored noise} on manifold. As we will see below, these noises are smoother than space-time white noise and allows us to study \eqref{PAM} in the It\^{o} sense.\\

Denote by $0=\lambda_0<\lambda_1\leq\lambda_2\leq\dots$ the eigenvalues of $\triangle_M$ and by $\phi_0, \phi_1,\phi_2,\dots$ an orthonormal sequence of corresponding eigenfunctions. Thus $\triangle_M \phi_n=\lambda_n \phi_n$ and $\int_M \phi_i\phi_j dm=\delta_{ij}$. 
For any $\varphi\in L^2(M)$, there
is a unique decomposition
\begin{align}\label{E: func decomp}
  \varphi(x) = \sum_{n\geq0}a_n\phi_n(x).
\end{align}
In particular, $a_0={m_0}^{-1/2}\int_M\varphi  dm$ where $m_0=m(M)$ is the volume of $M$.

We introduce a family of spatial Gaussian noises $\dot{W}$ on $M$ with parameters
$\alpha$, $\rho\ge 0$ as follows. Let $(\Omega, \mathcal{F},\mathbb{P})$ be a
complete probability space such that for any $\varphi$ and $\psi$ on
$M$ both ${W}\left(\varphi\right)$ and
${W}\left(\psi\right)$ are centered Gaussian random variables with
covariance given by
\begin{align}\label{E:NoiseCov}
 \mathbb{E} \left({W}\left(\varphi\right){W}\left(\psi\right)\right)
    = \langle\varphi,\psi\rangle_{\alpha,\rho} :=\rho a_0 {b}_0+\sum_{n\not=0} \frac{a_n{b}_n}{\lambda_n^{\alpha}}
   \end{align}
where $a_n$'s and $b_n$'s are the coefficients of $\varphi$ and $\psi$ in decomposition \eqref{E: func decomp},
respectively. For $\rho>0$, let $\mathcal{H}^{\alpha,\rho}$ be the completion of
$L^2(M)$ under $\langle\cdot,\cdot\rangle_{\alpha,\rho}$. It is clear that $\mathcal{H}^{\alpha,\rho}$ is a Hilbert space, and, by general construction (see, e.g., \cite[Chapter 1.1]{nualart2006malliavin}), 
one obtains an abstract Wiener space $(\Omega, \mathcal{H}^{\alpha,\rho},\mathbb{P})$.

\begin{remark}When $\rho=0$, some special care is needed in order to identify a suitable
Hilbert space $\mathcal{H}^{\alpha,0}$. Let $L^2_0(M)$ be the space of $L^2(M)$
functions on $M$ such that $a_0=0$. Denote by
$\mathcal{H}^{\alpha}_0$ the completion of $L^2_0$ under
$\langle\cdot,\cdot\rangle_{\alpha,\rho}$. One could have set
$\mathcal{H}^{\alpha,0}=\mathcal{H}^{\alpha}_0$. However, when solving SPDEs on
compact manifolds, it is desirable to consider Wiener integrals
${W}\left(\varphi\right)$ where $\varphi$ is a function on the
manifold such that $a_0=\frac{1}{m_0}\int_{M}\varphi(x) d x \not=0$, where $m_0$ is the volume of $M$.
For this purpose, consider $\mathcal{H}^{\alpha}_0+\mathbb{R} \coloneqq
\{\varphi+c: \varphi\in\mathcal{H}^{\alpha}_0, \textnormal{and}\
c\in\mathbb{R}\}$. We can identify $\mathcal{H}^{\alpha}_0+\mathbb{R}$ with
$\mathcal{H}^{\alpha}_0$ through the equivalence relation $\sim$, in which
$\varphi\sim\psi$ if $\varphi-\psi$ is a constant. Finally, we set
\begin{align*}
  \mathcal{H}^{\alpha,0}=(\mathcal{H}^{\alpha}_0+\mathbb{R})/ \sim.
\end{align*}
Throughout the
rest of our discussion, we will also adopt the short-hand $\mathcal{H}^\alpha$
for $\mathcal{H}^{\alpha,0}$.
\end{remark}

\begin{remark}
  It is clear from~\eqref{E:NoiseCov} that $L^2(M)\subset
  \mathcal{H}^{\alpha,\rho}\subset\mathcal{H}^{\beta,\rho}$ for
  $0\leq\alpha<\beta$. Moreover, the colored noise includes the white noise on $M$ if we pick $\rho=1$ and $\alpha=0$.
\end{remark}

The covariance structure $\langle\cdot,\cdot\rangle_{\alpha,\rho}$ admits a kernel. Indeed, let $p_t(x,y)$ be the heat kernel on $M$ and set for $\alpha,\rho>0$, 
\begin{align}\label{def: G_alpha etc}\bm{G}_\alpha(x,y):=\frac{1}{\Gamma(\alpha)}\int_0^\infty t^{\alpha-1}\left(P_t(x,y)-\frac{1}{m_0}\right) dt,\quad \textnormal{and}\ \ \bm{G}_{\alpha,\rho}(x,y):=\frac{\rho}{m_0}+\bm{G}_\alpha(x,y).
\end{align}


By the spectral representation of the heat kernel 
$$P_t(x,y)=\frac{1}{m_0}+\sum_{n\geq 1}e^{-\lambda_n t}\phi_n(x)\phi_n(y),$$
one has
\begin{align}\label{covar spectr}
\bm{G}_\alpha(x,y)=\sum_{n\geq 1}\frac{1}{\lambda_n^\alpha}\phi_n(x)\phi_n(y),
\end{align}hence
\begin{align*}
\langle\varphi,\psi\rangle_{\alpha,\rho}=\int_{M^2}\phi(x)\bm{G}_{\alpha,\rho}(x,y)\psi(y)m(dx)m(dy).
\end{align*}

\begin{remark}
    It is clear from \eqref{covar spectr} that $\bm{G}_\alpha$ is the analogue of the Riesz kernel on $\mathbb{R}^d$. By \eqref{def: G_alpha etc} one has $\int_M \bm{G}_\alpha(x,y)m(dy)=0$. Hence $\bm{G}_\alpha$ is not non-negative. However, it can be shown that $\bm{G}_\alpha$ is bounded below on $M$ (see \cite{COV23} for example). We therefore can always pick a large enough $\rho$ so that the spatial covariance function $\bm{G}_{\alpha,\rho}$ is non-negative. 
\end{remark}

The following proposition gives the regularity of $\bm{G}_{\alpha}$ (hence $\bm{G}_{\alpha,\rho}$ as well) on diagonal. 

\begin{proposition}\label{Prop: G_alpha}
    For any $\alpha> 0$, we have $$\abs{\bm{G}_{\alpha}(x,y)}\leq \begin{cases}
        C_\alpha,& \alpha>d/2\\
        C_\alpha(1+\log^-\bm{d}(x,y)),& \alpha=d/2\\
        C_\alpha \bm{d}(x,y)^{2\alpha-d},& \alpha<d/2.
    \end{cases}$$
    Where $\log^-(z)=\max(z,-\log z)$ and $\bm{d}(x,y)$ is the Riemannian distance on $M$.
\end{proposition}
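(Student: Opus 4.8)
The plan is to control the defining integral in~\eqref{def: G_alpha etc} by splitting it at $t=1$,
$$\Gamma(\alpha)\,G_\alpha(x,y)=\int_0^1 t^{\alpha-1}\Big(P_t(x,y)-\tfrac{1}{m_0}\Big)\,dt+\int_1^\infty t^{\alpha-1}\Big(P_t(x,y)-\tfrac{1}{m_0}\Big)\,dt=:A(x,y)+B(x,y),$$
and to show that the large-time term $B$ is bounded by a constant independent of $(x,y)$, so that all of the $(x,y)$-dependence in the stated estimate comes from the small-time term $A$. For $B$ one uses the spectral gap $\lambda_1>0$: from the identity $P_t(x,y)-\tfrac1{m_0}=\int_M\big(P_{t/2}(x,z)-\tfrac1{m_0}\big)\big(P_{t/2}(z,y)-\tfrac1{m_0}\big)\,m(dz)$, Cauchy--Schwarz, and $\|P_{t/2}(x,\cdot)-\tfrac1{m_0}\|_{L^2(M)}^2=P_t(x,x)-\tfrac1{m_0}=\sum_{n\geq1}e^{-\lambda_n t}\phi_n(x)^2\leq C_M e^{-\lambda_1 t}$ for $t\geq1$, one gets $|P_t(x,y)-\tfrac1{m_0}|\leq C_M e^{-\lambda_1 t}$ for $t\geq1$, hence $|B(x,y)|\leq C_M\int_1^\infty t^{\alpha-1}e^{-\lambda_1 t}\,dt=:C_{\alpha,M}<\infty$ for every $\alpha>0$.

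For the small-time term I would invoke the Gaussian heat kernel upper bound of Lemma~\ref{th: heat kernel upper bound}, of the form $P_t(x,y)\leq C_1 t^{-d/2}\exp\!\big(-d(x,y)^2/(C_2 t)\big)$ for $0<t\leq 1$, together with $|P_t(x,y)-\tfrac1{m_0}|\leq P_t(x,y)+\tfrac1{m_0}$. The constant piece contributes $\int_0^1 t^{\alpha-1}\tfrac1{m_0}\,dt=\tfrac{1}{\alpha m_0}$, and, writing $r:=d(x,y)$, the main piece is at most $C_1\int_0^1 t^{\alpha-1-d/2}e^{-r^2/(C_2 t)}\,dt$. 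The crucial step is the scaling substitution $t=r^2 v$, under which this becomes
$$C_1\, r^{2\alpha-d}\int_0^{1/r^2} v^{\alpha-1-d/2}\,e^{-1/(C_2 v)}\,dv,$$
so everything reduces to estimating $\mathcal I(R):=\int_0^{R}v^{\alpha-1-d/2}e^{-1/(C_2 v)}\,dv$ for $R=1/r^2$ --- note that, whatever the sign of $\alpha-1-d/2$, the integrand is integrable near $v=0$ because of the exponential factor, so the only issue is the behaviour at $v=\infty$.

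It then remains to run the three cases. If $\alpha>d/2$, then $\alpha-1-d/2>-1$, so $\mathcal I(R)\leq\int_0^R v^{\alpha-1-d/2}\,dv=\tfrac{R^{\alpha-d/2}}{\alpha-d/2}$ and the main piece of $A$ is $\leq\tfrac{C_1}{\alpha-d/2}\,r^{2\alpha-d}\,r^{d-2\alpha}$, a constant; combined with $B$ and the constant piece this gives $|G_\alpha(x,y)|\leq C_\alpha$. If $\alpha<d/2$, then $v^{\alpha-1-d/2}$ is integrable at $v=\infty$, so $\mathcal I(R)\leq\int_0^\infty v^{\alpha-1-d/2}e^{-1/(C_2 v)}\,dv=:K_\alpha<\infty$ and the main piece of $A$ is $\leq C_1 K_\alpha\,r^{2\alpha-d}$; since $2\alpha-d<0$ and $r\leq\operatorname{diam}(M)$, the bounded contributions from $B$ and the constant piece are $\leq C r^{2\alpha-d}$ as well, giving $|G_\alpha(x,y)|\leq C_\alpha d(x,y)^{2\alpha-d}$. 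Finally, if $\alpha=d/2$ the integrand is $v^{-1}e^{-1/(C_2 v)}$ and $\mathcal I(R)\leq\int_0^1 v^{-1}e^{-1/(C_2 v)}\,dv+\int_1^{\max(R,1)}v^{-1}\,dv\leq C+2\log^+(1/r)$, so the main piece of $A$ is $\leq C(1+\log^- d(x,y))$ and, again absorbing the bounded terms, $|G_\alpha(x,y)|\leq C_\alpha(1+\log^- d(x,y))$.

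I do not expect a genuine obstacle in this argument --- once the Gaussian upper bound and the spectral-gap decay are in hand it is essentially bookkeeping. The one point that needs care is uniformity over $(x,y)\in M\times M$: the estimate must simultaneously handle $r\to 0$ (where $R=1/r^2\to\infty$, which is what produces the singular factor $r^{2\alpha-d}$ or $\log^- r$) and $r$ bounded away from $0$ (where the constant contributions from $B$ and from $1/m_0$ dominate), and this is exactly what the substitution $t=r^2v$ together with the compactness bound $r\leq\operatorname{diam}(M)$ accomplishes. Note also that only the heat kernel upper bound is used; no lower bound is needed, since the claim concerns $|G_\alpha|$.
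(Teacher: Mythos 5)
Your proof is correct. The paper itself does not give an argument for this proposition --- it simply cites \cite{Brosamler} --- so your self-contained derivation (split the heat-kernel integral at $t=1$; handle $t\geq 1$ via the $L^2$-contraction/spectral-gap bound $|P_t(x,y)-1/m_0|\leq C_Me^{-\lambda_1 t}$; handle $t\leq 1$ via the Gaussian upper bound and the scaling substitution $t=r^2v$) is the standard route one would expect, and the case analysis $\alpha\gtrless d/2$ at the end is carried out correctly, including the reduction $\log^+(1/r)\leq\log^- r$ under the paper's (nonstandard) definition $\log^-(z)=\max(z,-\log z)$.

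One small point worth flagging: you invoke Lemma~\ref{th: heat kernel upper bound} for the small-time Gaussian bound, but that lemma is proved in the paper under the hypothesis of non-positive sectional curvature, whereas Proposition~\ref{Prop: G_alpha} is stated without any curvature assumption. What your argument actually needs is only the off-diagonal Gaussian upper bound $P_t(x,y)\leq C_1 t^{-d/2}\exp(-d(x,y)^2/(C_2 t))$ for $t\leq 1$ with some $C_2>2$, which holds on any compact manifold by Li--Yau (this is precisely the estimate \eqref{relaxed HK bound 2} in the remark following Lemma~\ref{th: heat kernel upper bound}, which the paper notes does not require the curvature condition). Citing that remark rather than the lemma keeps the proof free of the unneeded curvature hypothesis. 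With that adjustment the argument is complete.
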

\begin{proof}
    See \cite{Brosamler}.
\end{proof}



Thanks to Proposition \ref{Prop: G_alpha}, the colored noise constructed above is indeed smoother than white noise for all $\alpha>0$, and defines a worthy martingale measure in the sense of Walsh\cite{Walsh86}.

\begin{definition}\label{def: Colored Noise}
    Let $\alpha>0$ and consider the following Hilbert space of space-time functions,
\begin{align}\label{eq-hilb}
\mathcal{H}_{\alpha,\rho}=L^2(\mathbb{R}_+, \mathcal{H}^{\alpha,\rho}).  
\end{align}
On a complete probability space $(\Omega, \mathcal{F},\mathbb{P})$ we define a centered Gaussian family $\{W_{\alpha,\rho}(\phi); \phi\in L^2(\R_+)\cap\mathcal{H}_{\alpha,\rho}(M)\}$, whose covariance is given by 
\[
\mathbf E\left[ W_{\alpha,\rho}(\varphi) W_{\alpha,\rho}(\psi)\right]
=\int_{\R_+}\ \left\langle \varphi (t,\cdot) , \psi (t,\cdot)\right\rangle_{\alpha,\rho}  dt \, ,
\]
for $\varphi$, $\psi$ in $\mathcal{H}_{\alpha,\rho}$ in the space variable. This family is called
colored noise on $M$ that is white in time.
\end{definition}

To simplify notation, we will drop the indexes $\alpha$ and $\rho$ and use $W$ for $W_{\alpha,\rho}$ throughout the rest of the paper. We will also write $dz$ instead of $m(dz)$ when integrating over $M$.

\section{The $\rhd$ operator and $\mathcal{L}_n$}
In order to establish the existence and uniqueness of the solution to equation \eqref{PAM} with measure-valued initial condition, we follow the iteration strategy developed in \cite{ChenThesis,ChenDalangAOP}. For this purpose, we introduce: 

\begin{definition}
   Let $M^4$ be the Cartesian products of four copies of $M$. For $h,w:\R_+\times M^4\ra \R$,  define the  operator $\rhd$ by $$h\rhd w(t,x_0,x,x_0',x'):=\int_0^t ds \iint_{M\times M}dzdz' h(t-s,z,x,z',x')w(s,x_0,z,x_0',z')\bm{G}_{\alpha,\rho}(z,z').$$
Define $\set{\mathcal{L}_n}_{n\geq 0}$ recursively by \begin{align}\label{def: L_n}
    \mathcal{L}_n(t,x_0,x,x_0',x'):=\begin{cases}
        P_t(x_0,x)P_t(x_0',x'),&n=0\\
        \mathcal{L}_0\rhd \mathcal{L}_{n-1}(t,x_0,x,x_0',x'),&n>0.
    \end{cases}
    \end{align}
\end{definition}

The role played by $\mathcal{L}_n$ can be formally explained as follows.  Recall $J_0(t,x)=\int_M P_t(x,y)\mu(dy)$ is the solution to the homogeneous heat equation starting from $\mu$, and define
$$J_1(t,x,x'):=J_0(t,x)J_0(t,x'),\quad  g(t,x,x'):=\E[u(t,x)u(t,x')].$$
It\^{o} isometry then implies 
    $$g(t,x,x')=J_1(t,x,x')+
    \beta^2\int_0^t ds \iint_{M^2}dzdz'P_{t-s}(x,z)P_{t-s}(x',z')\bm{G}_{\alpha,\rho}(z,z')g(s,z,z').$$

Iterating the above relation suggests the following formal equality:
\begin{align}
    g(t,x,x')=J_1(t,x,x')+&\sum_{n=0}^\infty \beta^{2n+2}\int_{0\leq s_n\leq s_{n-1}\leq \cdots \leq s_0\leq t}ds_n\cdots ds_0\iint_{M^{2n+2}}dz_0dz_0'\cdots dz_ndz_n'\nonumber\\
    &\quad\times J_1(s_n,z_n,z_n')\prod_{k=0}^n P_{s_{k-1}-s_k}(z_{k-1},z_k)P_{s_{k-1}-s_k}(z_{k-1}',z_k') \bm{G}_{\alpha,\rho}(z_k,z_k').
\end{align}
Writing $$J_1(s_n,z_n,z_n')=\int_{M^2}\mu(dz)\mu(dz') P_{s_n}(z_n,z)P_{s_n}(z_n',z'),$$ we have
\begin{align}\label{iteration corelation}g(t,x,x')=J_1(t,x,x')+\beta^2\iint_{M^2} \mu(dz)\mu(dz') \sum_{n=0}^\infty \beta^{2n}\mathcal{L}_n(t,x,z,x',z').\end{align}
Observe that the validity of the above computation relies on convergence of the following series: \begin{equation}\label{K_beta}
    \mathcal{K}_\beta(t,x,z,x',z'):=\sum_{n=0}^\infty \beta^{2n}\mathcal{L}_n(t,x,z,x',z').
\end{equation}
It has been shown in \cite{ChenThesis,ChenDalangAOP} that the existence and uniqueness of a solution to equation \eqref{PAM} as well as moment estimates of the solution hinge on proper estimates of $\mathcal{L}_n$. It also has been shown in the same papers that $\mathcal{L}_n$ can be controlled inductively by a proper estimate of $\mathcal{L}_1$. The rest of this section is thus devoted to the analysis of $\mathcal{L}_n, n\geq1$. More specifically, we obtain estimates of $\mathcal{L}_1$ in Sections 3.1 and 3.2 for large and small time, respectively. Then an iteration procedure gives the estimate of $\mathcal{L}_n, n\geq2$ in Section 3.3. Once the estimates of $\mathcal{L}_n$ are in place, we will address the well-posedness and moment bounds of equation \eqref{PAM} in Section 4.

\begin{remark}
    As an alternative to the iteration method described above, one can apply the method in \cite{Huang2016OnSH}, that is,  by dividing both sides of \eqref{pam:mild} by $J_0(t,x)$ and considering the norm
    $\sup_{x\in M}\left\|\frac{u(t,x)}{J_0(t,x)}\right\|_p.$
    However, the heart of the problem is still the estimate of \eqref{main quantity for iteratin} (or equivalently $\mathcal{L}_1$), so all of the geometric machinery in Section 3 remains necessary. 
\end{remark}


\bigskip
We first recall the following heat kernel upper bound on a non-positively curved compact Riemannian manifold.\\
\begin{lemma}\label{th: heat kernel upper bound}  Let $M$ be compact with non-positive sectional curvature. For any $m\geq1$, we have 
\begin{align}\label{def: G_t}P_t(x,y)\leq (2\pi t)^{-\frac{d}{2}}\exp\left(-\frac{\bm{d}(x,y)^2}{2t}\right)+C_H(t^m\wedge 1),\end{align}
for all $t>0,x,y\in M$ and some $C_H>0$.
\end{lemma}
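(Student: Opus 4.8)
The plan is to start from the classical heat kernel asymptotics on a compact manifold and upgrade them to a uniform-in-$(t,x,y)$ bound by splitting into small and large time regimes. First I would handle the small-time regime $0 < t \le t_0$ for some fixed $t_0$. Here the key input is the Minakshisundaram–Pleijel/Molchanov short-time expansion together with the fact that on a manifold with non-positive curvature the injectivity radius is bounded below and, more importantly, the exponential map is a diffeomorphism from each tangent space onto $M$ (Cartan–Hadamard in the simply connected case, and in general the distance function $d(x,y)^2$ still controls the leading exponential term globally since there are no conjugate points). The standard parametrix construction gives, for $x,y$ with $d(x,y)$ less than the injectivity radius,
$$P_t(x,y) = (2\pi t)^{-d/2} e^{-d(x,y)^2/2t}\bigl(u_0(x,y) + O(t)\bigr),$$
where $u_0(x,y) = \Theta(x,y)^{-1/2}$ with $\Theta$ the volume density in normal coordinates; non-positive curvature forces $\Theta \le 1$ (volume comparison), hence $u_0 \le 1$, which is exactly what lets the leading term be bounded by $(2\pi t)^{-d/2} e^{-d(x,y)^2/2t}$ with no multiplicative constant larger than one. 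For $x,y$ beyond the injectivity radius one uses the off-diagonal Gaussian upper bound (Li–Yau type, or the Cheeger–Yau comparison, valid under a lower Ricci bound which non-positive sectional curvature supplies) to absorb everything into the $C(t^m \wedge 1)$ error, since there $d(x,y)$ is bounded below and $t$ is small so $e^{-d(x,y)^2/2t}$ decays faster than any power of $t$.

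For the large-time regime $t \ge t_0$, I would use compactness directly: the heat kernel converges uniformly to $1/m_0$ as $t \to \infty$ (spectral gap, since $M$ is connected compact so $\lambda_1 > 0$), and more quantitatively $|P_t(x,y) - 1/m_0| \le C e^{-\lambda_1 t} \le C(t^m \wedge 1)$ for $t \ge t_0$ after adjusting $C$. On the other hand the Gaussian term $(2\pi t)^{-d/2} e^{-d(x,y)^2/2t} \ge 0$ and $1/m_0$ is a fixed constant, so for $t$ large one simply needs $P_t(x,y) \le 1/m_0 + C \le (2\pi t)^{-d/2} e^{-d(x,y)^2/2t} + C(t^m\wedge 1)$, which holds once $C$ is taken large enough (note $t^m \wedge 1 = 1$ for $t \ge 1$, and on the compact interval $[t_0,1]$ all quantities are bounded). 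Matching the two regimes at $t_0$ just requires taking the larger of the two constants.

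The main obstacle I expect is the small-time off-diagonal control at distances comparable to or exceeding the injectivity radius, i.e., making sure that the "error" term genuinely decays like $t^m \wedge 1$ uniformly and that one is not secretly using the leading Gaussian at points where the naive parametrix breaks down (conjugate points do not occur here, but cut points do, and multiple minimizing geodesics can make the density $\Theta$ singular). The clean way around this is: near the diagonal use the parametrix with $u_0 \le 1$; away from the diagonal (where $d(x,y) \ge \varepsilon$ for some fixed $\varepsilon$ less than the injectivity radius) do not use the parametrix at all but instead the global Li–Yau/Cheeger–Yau Gaussian upper bound $P_t(x,y) \le C t^{-d/2} e^{-d(x,y)^2/(4t) + Ct}$, and observe that for $t \le t_0$ small and $d(x,y) \ge \varepsilon$ the quantity $t^{-d/2} e^{-\varepsilon^2/4t}$ is bounded by $C_m t^m$ for every $m$. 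Citing the known Gaussian heat kernel estimates under a Ricci lower bound (which the stated reference to \cite{Brosamler}, or the standard references on heat kernels on manifolds, provides) makes this rigorous without reproving them; indeed the excerpt already defers the proof to the literature, so the write-up can reasonably just assemble these ingredients.
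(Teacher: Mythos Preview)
Your split into small and large time, and the large-time argument via the spectral gap, match the paper exactly. For small time the paper proceeds differently: rather than separating near-diagonal (parametrix with $u_0\le 1$) from far-from-diagonal (Li--Yau), it observes that non-positive curvature forces only finitely many distance-minimizing geodesics between any two points and then invokes the discussion around Theorem~5.3.4 in Hsu's book, which is effectively a sum over geodesic contributions on the Cartan--Hadamard universal cover. Your route is a reasonable alternative in spirit, and the $u_0\le 1$ observation is indeed the natural local mechanism for the sharp prefactor.

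A few points of caution. Minor slips: under non-positive sectional curvature the Jacobian of $\exp_x$ in normal coordinates satisfies $\Theta\ge 1$ (geodesics spread), so your inequality for $\Theta$ is reversed, though your conclusion $u_0=\Theta^{-1/2}\le 1$ survives; non-positive sectional curvature gives an \emph{upper} bound on Ricci, so the hypothesis for Li--Yau is supplied by compactness, not by the curvature sign; and Cheeger--Yau is a heat-kernel \emph{lower} bound, not the upper bound you want here. More substantively, the multiplicative $O(t)$ remainder in the Minakshisundaram--Pleijel expansion does not convert to the additive $C(t^m\wedge 1)$ error in the lemma: on the diagonal the discrepancy is of order $t\cdot t^{-d/2}$, which diverges as $t\downarrow 0$ for $d\ge 2$. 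To close this gap you would need the sharp comparison $P_t^{\tilde M}(\tilde x,\tilde y)\le (2\pi t)^{-d/2}e^{-\tilde d(\tilde x,\tilde y)^2/2t}$ on the Cartan--Hadamard cover (with no remainder), followed by summing over deck translates whose contributions beyond the nearest one decay like $e^{-c/t}$ and hence are $O(t^m)$; this is precisely what the paper's finite-geodesic observation and its citation of Hsu encode.
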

\begin{proof}  For large $t$, \eqref{def: G_t} follows from the following standard estimate\cite[Chapter 3]{JostRG} on compact manifolds: there exist $\alpha>0, C>0$ such that
\[\sup_{x,y\in M}|P_t(x,y)-m_0^{-1}|\leq Ce^{-\alpha t},\quad t\geq1.\]

The curvature condition is used for small $t$, under which there are only finite many distance minimizing geodesics connecting any two point $x,y\in M$.   The discussion in the proof of Theorem 5.3.4 of \cite{hsu02} therefore implies in short time (say $0<t<1$) we have \[P_t(x,y)\leq \frac{C}{t^{d/2}}e^{-\frac{\bm{d}(x,y)^2}{2t}}.\]   Combining these finishes the proof.
\end{proof}
\begin{remark}
The main result of \cite{LiYau86} implies that 
for any fixed $\theta\in (0,1)$, one has
\begin{align}\label{relaxed HK bound 2}
   P_t(x,y) \leq (2\pi t)^{-\frac{d}{2}}\exp\left(-\frac{\theta \bm{d}(x,y)^2}{2t}\right)+C_H(t^m\wedge 1).
\end{align}

As one will see below, \eqref{relaxed HK bound 2} is sufficient for our analysis. For convenience, we proved the optimal bound with $\theta=1$, for which finitely many geodesics is necessary.
\end{remark}

To proceed, we make a remark on some elementary computations that will be used repeatedly in the sequel.

\begin{remark}\label{rk: turn to eclidean integral}
Throughout the paper, we denote the injectivity radius of $M$ by $i_M$.  Note that for $\delta=i_M/8$ one has 
\begin{align}\label{L1 bound for covar}\norm{\bm{d}(z,\cdot)^{2\alpha-d}}_{L^1(M)}&=\left(\int_{B(z,\delta)}+\int_{B(z,\delta)^c}\right)dz' \bm{d}(z,z')^{2\alpha-d}\nonumber\\
&\leq C_M \int_{B_{\R^d}(0,\delta)} \abs{x}^{2\alpha-d}dx+\frac{m_0}{\delta^{d-2\alpha}}=c_{\alpha,M}.\end{align}
The above estimate will be used repeatedly to bound $\norm{\bm{d}(z,\cdot)^{2\alpha-d}}_{L^1(M)}$ in the sequel. The inequality in \eqref{L1 bound for covar} follows by taking the integral into geodesic normal coordinates around $z$. The estimate is uniform in $z$ thanks to the compactness of $M$. 
    This procedure will be performed every time when moving an integral into normal coordinates without stating so in the rest of the paper. In particular, the computation below will be used repeatedly later:
 \begin{align}\label{int Gaussian M}
      \int_M dz (2\pi s)^{-d/2}e^{-\frac{\bm{d}(z,x)^2}{2s}}
       & =\left(\int_{B(x,\delta)}+\int_{B(x,\delta)^c}\right)dz (2\pi s)^{-d/2}e^{-\frac{\bm{d}(z,x)^2}{2s}}\nonumber\\
&\leq  C_M\int_{B_{\mathbb{R}^d}(0,\delta)} (2\pi s)^{-d/2}e^{-\frac{\abs{z}^2}{2s}}dz +m_0(2\pi s)^{-d/2}e^{-\frac{\delta^2}{2s}}\nonumber\\
  &\leq c_M,\quad\mathrm{for \ all}\  s\geq0.     
    \end{align}
    
\end{remark}

Now we focus on obtaining a proper upper bound of $\mathcal{L}_1$. For simplicity, throughout our discussion below, we will take $m=1$ in \eqref{def: G_t}, and set
\begin{align}\label{def: G_t(x,y)}
G_t(x,y):=(2\pi t)^{-\frac{d}{2}}\exp\left(-\frac{\bm{d}(x,y)^2}{2t}\right)+C_H(t\wedge 1),
\end{align}
\begin{align}\label{def: G_{t,x,y}(s,z)}
G_{t,x,y}(s,z):=\frac{G_{t-s}(x,z)G_s(z,y)}{G_t(x,y)}.
\end{align}

Lemma \ref{th: heat kernel upper bound} implies \begin{align}\label{bound: p by G}
    P_{t-s}(x_0,z)P_s(z,x)\leq&G_t(x_0,x)\frac{G_{t-s}(x_0,z)G_s(z,x)}{G_t(x_0,x)}=G_t(x_0,x)G_{t,x_0,x}(s,z).
\end{align}
Recall the definition of $\mathcal{L}_n$ in \eqref{def: L_n}, in particular
$$\mathcal{L}_1(t,x_0,x,x_0',x')=\int_0^t ds\int_{M^2}dzdz' P_{t-s}(x_0,z)P_s(z,x)P_{t-s}(x_0',z')P_s(z',x')\bm{G}_{\alpha,\rho}(z,z').$$
Thus \eqref{bound: p by G} gives,
$$\mathcal{L}_1(t,x_0,x,x_0',x')\leq G_t(x_0,x)G_t(x_0',x')\int_0^t ds\int_{M^2}dzdz' G_{t,x_0,x}(s,z)G_{t,x_0',x'}(s,z')\bm{G}_{\alpha,\rho}(z,z').$$
Deviating from existing literature \cite{ChenThesis,ChenDalangAOP,ChenKim19,COV23}, the analysis of \eqref{main quantity for iteratin} will be replaced with that of 
\begin{align}\label{main quantity for interation G}\int_0^t ds\int_{M^2}dzdz' G_{t,x_0,x}(s,z)G_{t,x_0',x'}(s,z')\bm{G}_{\alpha,\rho}(z,z').\end{align}
The reason we switch from \eqref{main quantity for iteratin} to \eqref{main quantity for interation G} (that is, swithcing from $P_{t,x,x_0}(s,z)$ to $G_{t,x,x_0}(s,z)$ ) is that $G_{t,x,y}(s,z)$ takes a rather explicit form and still captures the main property of $P_{t,x,y}(s,z)$; however, a good estimate of $P_{t,x,x_0}(s,z)$ may require both  heat kernel upper bound and lower bound.  

\bigskip
An upper bound of \eqref{main quantity for interation G} will be obtained by dividing the cases according to $t\geq \varepsilon$ and $t<\varepsilon$ for a prefixed small $\varepsilon>0$.

\subsection{Upper bound of $\mathcal{L}_1$ for large time ($t\geq \varepsilon$)}
The following upper bound of $\mathcal{L}_1$ is the main result of this section. It relies on the  observation that $G_{t,x,y}(s,z)$ is comparable to $G_s(x,z)$ when $t$ is large and $s<t/2$, which will be detailed in \eqref{C_epsilon} below. In this case, computations are local and do not depend on the global geometry of $M$.
\begin{theorem}\label{theorem:large time L1 bound}
    Assume $\frac{d}{2}>\alpha>\frac{d-2}{2}$ and fix $\varepsilon>0$. Recall the definition of $G_t(x,y)$ in \eqref{def: G_t(x,y)} and
    set $$k_1(s):=\sup_{x,x'\in M}\int_{M^2} dzdz' G_s(x,z)G_s(x',z')\bm{d}(z,z')^{2\alpha-d},\quad s>0.$$
We have, 
$$k_1(s)\leq C_{\alpha,M}(1+s^{\frac{2\alpha-d}{2}}),$$
    for some positive constant $C_{\alpha,M}$ depending on $\alpha$ and $M$. Moreover, for all $t\geq\varepsilon,$ 
    \begin{align}\label{bound:L1 by k1}\mathcal{L}_1(t,x_0,x,x_0',x')\leq C_L G_t(x_0,x)G_t(x_0',x')\left(\int_0^t k_1(s)ds\right),\end{align}
    where $C_L$ is a positive constant depending on $\varepsilon$ and $M$. 
    

\end{theorem}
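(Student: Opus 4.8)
The plan is to split the statement into two parts: first the bound on $k_1(s)$, then the bound \eqref{bound:L1 by k1} on $\mathcal L_1$ for $t\geq\varepsilon$.

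\textbf{Step 1: Bounding $k_1(s)$.} The key observation is that $G_s(x,z)$ is a sum of a Gaussian-type kernel and a bounded remainder $C(s\wedge 1)$, so I would split $G_s(x,z)G_s(x',z')$ into four products. For the bounded$\times$bounded term, the inner integral against $d(z,z')^{2\alpha-d}$ is controlled by $\|d(z,\cdot)^{2\alpha-d}\|_{L^1(M)}\le c_{\alpha,M}$ from Remark~\ref{rk: turn to eclidean integral}, times $m_0(s\wedge 1)^2$, hence bounded by a constant. For the mixed terms, say Gaussian in $z$ and bounded in $z'$, one first integrates in $z'$: $\int_M d(z,z')^{2\alpha-d}\,dz'\le c_{\alpha,M}$ uniformly in $z$ (again Remark~\ref{rk: turn to eclidean integral}), then $\int_M (2\pi s)^{-d/2}e^{-d(x,z)^2/2s}\,dz\le c_M$ uniformly in $s$ by \eqref{int Gaussian M}; so this term is also bounded by a constant. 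The main term is Gaussian$\times$Gaussian. Here I would pass both outer integrals into geodesic normal coordinates around $x$ and $x'$ (paying the usual uniform metric-distortion constants, and splitting off the complement of $B(x,\delta)$ where the Gaussian is exponentially small) to reduce to a Euclidean integral of the form $\int_{\R^d}\!\int_{\R^d}(2\pi s)^{-d/2}e^{-|u|^2/2s}(2\pi s)^{-d/2}e^{-|v|^2/2s}\,\Phi(u,v)\,du\,dv$, where $\Phi(u,v)$ dominates $d(z,z')^{2\alpha-d}$; using the lower metric-comparison $d(z,z')\ge c|u-v|$ valid at small scales (and handling large scales by boundedness of $d(z,z')^{2\alpha-d}$ there since $2\alpha-d<0$), this is bounded by a constant times $\int\!\int (2\pi s)^{-d}e^{-(|u|^2+|v|^2)/2s}|u-v|^{2\alpha-d}\,du\,dv$. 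A change of variables $w=u-v$ and integrating out the center of mass gives $C\int_{\R^d}(4\pi s)^{-d/2}e^{-|w|^2/4s}|w|^{2\alpha-d}\,dw = C s^{\frac{2\alpha-d}{2}}\int_{\R^d}(4\pi)^{-d/2}e^{-|\tilde w|^2/4}|\tilde w|^{2\alpha-d}\,d\tilde w$, and the last integral converges precisely because $2\alpha-d>-2>-d$ (this is where $\alpha>(d-2)/2$ enters; note we only need $2\alpha-d>-d$ for convergence but the Dalang range is the relevant one). Collecting the four contributions yields $k_1(s)\le C_{\alpha,M}(1+s^{\frac{2\alpha-d}{2}})$.

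\textbf{Step 2: From $k_1$ to $\mathcal L_1$ for $t\ge\varepsilon$.} Starting from the displayed bound $\mathcal L_1\le G_t(x_0,x)G_t(x_0',x')\int_0^t\!ds\int_{M^2}G_{t,x_0,x}(s,z)G_{t,x_0',x'}(s,z')G_{\alpha,\rho}(z,z')$, I would split the time integral at $t/2$. On $s\in[0,t/2]$ one has $t-s\ge t/2\ge\varepsilon/2$, so $G_{t-s}(x_0,z)/G_t(x_0,x)$ and the analogous factor are bounded above by a constant $C_\varepsilon$ depending only on $\varepsilon$ and $M$ — this is the comparison alluded to around \eqref{C_epsilon}, following from the explicit form of $G_t$: the Gaussian part $(2\pi(t-s))^{-d/2}e^{-d(x_0,z)^2/2(t-s)}$ is $\le (\pi t)^{-d/2}$ while $G_t(x_0,x)\ge C(t\wedge1)\ge C(\varepsilon\wedge1)$, plus the remainder term is trivially bounded; altogether $G_{t,x_0,x}(s,z)\le C_\varepsilon G_s(z,x_0)$ after absorbing constants, uniformly for $t\ge\varepsilon$, $s\le t/2$. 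Hence the $[0,t/2]$ contribution is $\le C_\varepsilon^2\int_0^{t/2}\!ds\int_{M^2}G_s(x_0,z)G_s(x_0',z')G_{\alpha,\rho}(z,z')$; bounding $G_{\alpha,\rho}$ by $\rho/m_0+C_\alpha d(z,z')^{2\alpha-d}$ via Proposition~\ref{Prop: G_alpha} (the $\alpha<d/2$ case), the $\rho/m_0$ part integrates to a constant (using \eqref{int Gaussian M} twice plus the remainder terms), and the rest is exactly $\le k_1(s)$, so this piece is $\le C_\varepsilon'\int_0^t (1+k_1(s))\,ds$. On $s\in[t/2,t]$ I do the symmetric thing with the roles of $s$ and $t-s$ swapped: substitute $s\mapsto t-s$, so now $s'=t-s\le t/2$ and $G_{t,x_0,x}(s,z)\le C_\varepsilon G_{t-s}(x_0,z)=C_\varepsilon G_{s'}(x_0,z)$, landing on the same bound. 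Finally absorb the additive constants into $k_1$ (note $\int_0^t 1\,ds=t\le C_\varepsilon\cdot$ something is false for large $t$, so instead keep it as $\int_0^t(1+k_1(s))\,ds$ and observe $1\le 1+k_1(s)$, or more cleanly note the statement's RHS has $\int_0^t k_1(s)ds$ and $k_1(s)\ge$ a positive constant is \emph{not} guaranteed — so I would instead bound the constant pieces by $C\int_0^t k_1(s)\,ds$ using $k_1(s)\ge c_{\alpha,M}s^{(2\alpha-d)/2}$ which is integrable near $0$ but, hmm). The cleanest fix: prove the stated inequality with $\int_0^t k_1(s)\,ds$ replaced by $\int_0^t(1+k_1(s))\,ds$, and then note $1+k_1(s)\le C_{\alpha,M}(1+s^{(2\alpha-d)/2})\le C_{\alpha,M}' k_1$-type bound — actually since $k_1(s)\le C_{\alpha,M}(1+s^{(2\alpha-d)/2})$ and $\int_0^t s^{(2\alpha-d)/2}ds<\infty$ because $2\alpha-d>-2$, we get $\int_0^t(1+k_1(s))ds\le C(1+t)\le C_\varepsilon(t) $ and for $t\ge\varepsilon$ comparisons with $\int_0^t k_1$ work since $\int_0^\varepsilon k_1>0$; so merging the constant into $C_L$ depending on $\varepsilon$ gives \eqref{bound:L1 by k1}.

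\textbf{Main obstacle.} The delicate point is Step~1's Gaussian--Gaussian term: transferring the double integral over $M^2$ to a Euclidean double integral while correctly tracking the metric comparison $d(z,z')\asymp |u-v|$ between the two normal-coordinate charts (centered at two \emph{different} points $x,x'$) when $z,z'$ are close to each other, and simultaneously handling the far regime where the Gaussians are negligible but $d(z,z')^{2\alpha-d}$ must still be controlled (harmless since it's bounded there as $2\alpha-d<0$). This requires the compactness of $M$ for uniform distortion constants and is exactly the kind of "turn to Euclidean integral" step flagged in Remark~\ref{rk: turn to eclidean integral}; the rest is bookkeeping with the explicit form of $G_t$.
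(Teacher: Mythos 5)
Your overall strategy coincides with the paper's: split $G_s(x,z)G_s(x',z')$ into Gaussian$\times$Gaussian, mixed, and bounded pieces for the bound on $k_1$; then use $t\geq\varepsilon$ to compare $G_{t,x_0,x}(s,z)$ to $G_s(z,x_0)$ for $s\leq t/2$, and symmetrize in $s\leftrightarrow t-s$. However, the execution of the Gaussian$\times$Gaussian term has a genuine gap. You reduce to the Euclidean integral by putting $z$ in normal coordinates $u$ around $x$ and $z'$ in normal coordinates $v$ around $x'$, and then invoke a comparison $d(z,z')\geq c|u-v|$. This is \emph{false} when $x\neq x'$: take $u=0$ (so $z=x$) and $v$ equal to the normal coordinate of $x$ in the chart around $x'$ (so $z'=x$); then $d(z,z')=0$ while $|u-v|=d(x,x')>0$, and since $2\alpha-d<0$ the needed inequality $d(z,z')^{2\alpha-d}\leq C|u-v|^{2\alpha-d}$ fails. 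The point is that a cross-chart comparison of $d(z,z')$ against a coordinate difference only makes sense when both points are expressed in a \emph{single} chart. The paper resolves this by a case split on $d(x,x')$ relative to $i_M$: if $d(x,x')\geq 5i_M/16$, then for $z\in B(x,\delta)$, $z'\in B(x',\delta)$ one has $d(z,z')\geq i_M/16$, so $d(z,z')^{2\alpha-d}$ is bounded and the whole double Gaussian integral is bounded; if $d(x,x')<5i_M/16$, then $B(x',\delta)\subset B(x,i_M/2)$, so \emph{both} $z$ and $z'$ live in a single normal chart around $x$, where the comparison $d(z,z')\geq c|z-z'|$ is legitimate (the paper gets $c=1$ from non-positive curvature, but compactness alone would give a positive constant). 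Your "far/near" split was in terms of $|u|,|v|$ large vs.\ small inside each chart, which does not address this; you need the far/near split in terms of $d(x,x')$.

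A smaller remark on Step 2: the hand-wringing about whether the additive constant $\rho/m_0$ in $G_{\alpha,\rho}$ can be absorbed into $\int_0^t k_1(s)\,ds$ is unnecessary. Since $M$ is compact, $d(z,z')\leq R_M$ and hence $d(z,z')^{2\alpha-d}\geq R_M^{2\alpha-d}>0$, so $G_{\alpha,\rho}(z,z')\leq \rho/m_0 + C_\alpha d(z,z')^{2\alpha-d}\leq C_3\,d(z,z')^{2\alpha-d}$ outright; the constant piece never needs to be tracked separately, and the final bound \eqref{bound:L1 by k1} then follows directly, as in the paper's argument following \eqref{upper bound L_1 midstep 1}.
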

\begin{proof}
    Recall that $$\mathcal{L}_1(t,x_0,x,x_0',x')=\int_0^t ds\int_{M^2}dzdz' P_{t-s}(x_0,z)P_s(z,x)P_{t-s}(x_0',z')P_s(z',x')\bm{G}_{\alpha,\rho}(z,z').$$
    Write the time integral $\int_0^t=\int_0^{\frac{t}{2}}+\int_{\frac{t}{2}}^t$. 
    We first consider $\int_0^{\frac{t}{2}}$ and show that for some positive constant $C_1$ depending on $\epsilon, M$ (but not on $t$, $x_0,x, x_0'$ and $x'$), one has
\begin{align}\label{est t/2}
\int_0^\frac{t}{2}\leq C_1G_t(x_0,x)G_t(x_0',x')\left(\int_0^\frac{t}{2}k_1(s)ds\right),\quad\mathrm{for\ all}\ \ t\geq \varepsilon.
\end{align}
Then by the symmetry of $s$ and $t-s$ in the definition of $\mathcal{L}_1$, a change of variables $s'=t-s$ gives the same bound for $\int_{t/2}^t$, that is
\begin{align*}
\int_\frac{t}{2}^t\leq C_1G_t(x_0,x)G_t(x_0',x')\left(\int_0^\frac{t}{2}k_1(s)ds\right).
\end{align*}
We thus conclude, observing the positivity of $k_1(s)$, that for all $t\geq\varepsilon$
\begin{align}\mathcal{L}_1
&\leq 2C_1G_t(x_0,x)G_t(x_0',x')\left(\int_0^\frac{t}{2}k_1(s)ds\right)\nonumber\\
&\leq C_LG_t(x_0,x)G_t(x_0',x')\left(\int_0^tk_1(s)ds\right),
\end{align}
which gives the desired upper bound \eqref{bound:L1 by k1}.   To finish the proof, we need to establish \eqref{est t/2} and 
\begin{align}\label{est: int k1}
k_1(s)\leq C_{\alpha,M}(1+s^{\frac{2\alpha-d}{2}}),\quad \mathrm{for\ all}\ s>0.
\end{align}
To this aim, set
$$\tilde{\mathcal{L}}_1(t,x_0,x,x_0',x')=\int_0^\frac{t}{2} ds\int_{M^2}dzdz' P_{t-s}(x_0,z)P_s(z,x)P_{t-s}(x_0',z')P_s(z',x')\bm{G}_{\alpha,\rho}(z,z').$$

Since $0<s<t/2$, we have for all $t\geq \varepsilon$
    \begin{align}
        \frac{G_{t-s}(z,x)}{G_t(x_0,x)}=&\frac{(2\pi (t-s))^{-d/2}e^{-\frac{\bm{d}(z,x)^2}{2(t-s)}}+{C_H(t-s)}\wedge 1}{(2\pi t)^{-d/2}e^{-\frac{\bm{d}(x_0,x)^2}{2t}}+C_H(t\wedge 1)}\nonumber\\
        &\leq \frac{(\pi t)^{-d/2}+C_H}{(2\pi t)^{-d/2}e^{-\frac{R^2_M}{2\varepsilon}}+C_H}
        \leq C_2.\label{C_epsilon}
    \end{align}
    In the above $R_M$ is the diameter of $M$ and $C_2$ is a positive constant depending on $\varepsilon$ and $M$.  
    Now applying the heat kernel upper bound \eqref{def: G_t} together with \eqref{C_epsilon} and Proposition \ref{Prop: G_alpha} gives us 
    \begin{align}\label{upper bound L_1 midstep 1}\tilde{\mathcal{L}}_1&\leq C_3C_2^2 G_t(x_0,x)G_t(x_0',x')\int_0^\frac{t}{2}ds\iint_{M^2}dzdz' \bm{d}(z,z')^{2\alpha-d}G_s(z,x)G_s(z',x')\nonumber\\
  &\leq C_3C_2^2 G_t(x_0,x)G_t(x_0',x')\int_0^\frac{t}{2}k_1(s)ds .\end{align}

This gives \eqref{est t/2}.  

\bigskip
In order to show \eqref{est: int k1},  we write 
    \begin{align*}&G_s(z,x)G_s(z,x')\\
    \leq&(2\pi s)^{-d}e^{-\frac{\bm{d}(z,x)^2}{2s}}e^{-\frac{\bm{d}(z',x')^2}{2s}}+C\left[(2\pi s)^{-d/2}e^{-\frac{\bm{d}(z,x)^2}{2s}}+(2\pi s)^{-d/2}e^{-\frac{\bm{d}(z',x')^2}{2s}}\right]+C^2,\end{align*}
    which gives us $$\iint_{M^2}dzdz' \bm{d}(z,z')^{2\alpha-d}G_s(z,x)G_s(z',x')\leq C(I_1+I_2+I_3),$$ where \begin{align*}
        I_1:=&\sup_{x,x'\in M}\iint_{M^2} dzdz' (2\pi s)^{-d}e^{-\frac{\bm{d}(z,x)^2}{2s}}e^{-\frac{\bm{d}(z',x')^2}{2s}}\bm{d}(z,z')^{2\alpha-d},\\
        I_2:=&\sup_{x\in M}\iint_{M^2} dzdz' (2\pi s)^{-d/2}e^{-\frac{\bm{d}(z,x)^2}{2s}}\bm{d}(z,z')^{2\alpha-d},\\
        I_3:=&\iint_{M^2} dzdz' \bm{d}(z,z')^{2\alpha-d}.
    \end{align*}.

    Clearly 
    $$I_3= C^2\iint_{M^2}dzdz' \bm{d}(z,z')^{2\alpha-d}\leq C^2m_0c_{\alpha,M} = C_{1,\alpha,M}, \quad\mathrm{for\ all}\ s>0.$$
Here we used \eqref{L1 bound for covar} for the integral over $M^2$.
    
    An upper bound for $I_2$ is straightforward as well:
    \begin{align*}
       I_2\leq & 2C\sup_{x\in M} \iint_{M^2}dzdz' \bm{d}(z,z')^{2\alpha-d} (2\pi s)^{-d/2}e^{-\frac{\bm{d}(z,x)^2}{2s}}\\
        \leq&2 Cc_{\alpha,M}   \sup_{x\in M}\int_M dz (2\pi s)^{-d/2}e^{-\frac{\bm{d}(z,x)^2}{2s}}\\
        \leq&  2Cc_{\alpha,M} c_M= C_{2,\alpha,M},\quad\mathrm{for \ all}\ s>0.
    \end{align*}
    In the above, we used \eqref{L1 bound for covar} for the second inequality and \eqref{int Gaussian M} for the third. 
   
    \smallskip
    Finally, we estimate $I_1$. 
    Let $U_1=B(x,\delta)$ and $U_2=B(x',\delta)$,
    we further decompose the integral over $M^2$ into 4 parts: 
     \begin{align}&\iint_{M^2}(2\pi s)^{-d}e^{-\frac{\bm{d}(z,x)^2}{2s}}e^{-\frac{\bm{d}(z',x')^2}{2s}}\bm{d}(z,z')^{2\alpha-d}dzdz'\nonumber\\
    =&\iint_{U_1\times U_2}+\iint_{U_1^c\times U_2}+\iint_{U_1\times U_2^c}+\iint_{U_1^c\times U_2^c}=J_1(s)+J_2(s)+J_3(s)+J_4(s).\label{decomp I_1}\end{align}

Since $\bm{d}(x,z),\bm{d}(x,z')\geq \delta$ when $z$ and $z'$ are outside the corresponding balls,  $J_4(s)$ can be trivially bounded from above as follows,
    \begin{align}\label{est: J_4}J_4(s)\leq \iint_{U_1^c\times U_2^c} dzdz' \left((2\pi s)^{-\frac{d}{2}}e^{-\frac{\delta^2}{2s}}\right)^2 \bm{d}(z,z')^{2\alpha-d}\leq \left((2\pi s)^{-\frac{d}{2}}e^{-\frac{\delta^2}{2s}}\right)^2 m_0 c_{\alpha,M}.\end{align}
    Here, we have used \eqref{L1 bound for covar} for the last inequality.

    Utilizing $\bm{d}(z,x)\geq \delta$, together with \eqref{L1 bound for covar} and \eqref{int Gaussian M}, we also have 
    \begin{align}\label{est: J2}J_2(s)\leq (2\pi s)^{-\frac{d}{2}}e^{-\frac{\delta^2}{2s}}\int_{U_2}(2\pi s)^{-\frac{d}{2}} e^{-\frac{\bm{d}(z',x')^2}{2s}} \norm{\bm{d}(\cdot,z')^{2\alpha-d}}_{L^1(M)}dz'\leq(2\pi s)^{-\frac{d}{2}} e^{-\frac{\delta^2}{2s}}c_M c_{\alpha,M}.\end{align}
    It is clear that $J_3(s)$ can be treated similarly.\\

    The estimate for $J_1(s)$ takes more effort and is obtained differently according to $\bm{d}(x,x')\geq 5i_M/16$ or $\bm{d}(x,x')<5i_M/16$.
    
     When $\bm{d}(x,x')\geq 5i_M/16$, one has
     \begin{equation}
     \bm{d}(z,z')\geq \frac{i_M}{16}\quad \mathrm{for}\ \  z\in U_1,z'\in U_2, 
     \end{equation}
    which, together with \eqref{int Gaussian M}, implies 
    \begin{align}\label{d(x,x')large}
    J_1(s)\leq \iint_{U_1\times U_2}dzdz' (2\pi s)^{-d}e^{-\frac{\bm{d}(z,x)^2}{2s}}e^{-\frac{\bm{d}(z',x')^2}{2s}}\left(\frac{i_M}{16}\right)^{2\alpha-d}\leq c_M^2\left(\frac{i_M}{16}\right)^{2\alpha-d}.
    \end{align}
    
    On the other hand, when $\bm{d}(x,x')<5_M/16$ and $z\in U_1, z'\in U_2$, one has 
    \begin{align*}z'\in B(x',\delta)\implies \bm{d}(x,z')\leq \bm{d}(x,x')+\bm{d}(x',z)\leq \frac{5i_M}{16}+\frac{i_M}{8}=\frac{7i_M}{16}<\frac{i_M}{2}.
    \end{align*}
    That is,
\begin{align*}B(x',\delta)\subset B(x,i_M/2).\end{align*}
    Therefore 
    \begin{align}\label{est: J_1}J_1(s)\leq \iint_{B(x,i_M/2)\times B(x,i_M/2)}dzdz' (2\pi s)^{-d}e^{-\frac{\bm{d}(z,x)^2}{2s}}e^{-\frac{\bm{d}(z',x')^2}{2s}}\bm{d}(z,z')^{2\alpha-d}.\end{align}
    We will compute the right-hand side of \eqref{est: J_1} in local coordinates. We choose normal coordinates $z=(z_1,\dots,z_d)$ around $x=(0,\dots,0)$.
    For any $z, z'\in B(x,i_M/2)$, denote by 
    $$\abs{z-z'}=\left((z_1-z_1')^2+\cdots+(z_d-z_d')^2\right)^{1/2},$$ 
    so that  $\bm{d}(z,x)=|z-x|$. Note that $M$ being compact gives a uniform bound on the volume form in \eqref{est: J_1}. Moreover, non-positive curvature implies $d(z,x')\geq \abs{z-x'},\bm{d}(z,z')\geq \abs{z-z'}$. Indeed, for $y,y'\in B(x,i_M/2)$, we have $d(y,x)=\abs{y-x},d(y',x)=\abs{y'-x}$. Then $d(y,y')\geq \abs{y-y'}$ follows immediately from $d(y,y')^2\geq d(y,x)^2+d(y',x)^2-2d(y,x)d(y',x)\cos(\alpha_{yy'})=\abs{y-y'}^2$, where $\alpha_{yy'}$ is the angle made by the geodesics connecting $x,y$ and $x,y'$ (see \cite[Chapter 6]{PetersenRG}). With all the considerations above, when estimating the right-hand side of \eqref{est: J_1} in coordinates we can replace all Riemannian distances by $\abs{\cdot}$, and the integral in \eqref{est: J_1} is upper bounded (up to a multiple of a constant depending on $M$) by 
    $$\sup_{x,x'\in\mathbb{R}^d}\iint_{\R^{2d}}dzdz' (2\pi s)^{-d}e^{-\frac{\abs{z-x}^2}{2s}}e^{-\frac{\abs{z'-x'}^2}{2s}}\abs{z-z'}^{2\alpha-d}.$$ Standard Fourier analysis shows that the above is finite when $\alpha>(d-2)/2$ and that the supremum is achieved at $x=x'$. In particular, if we pick $x=x'=0$, a change of variables $y=z/\sqrt{s}, y'=z'/\sqrt{s}$ together with some elementary computation gives
    $$\sup_{x,x'\in\mathbb{R}^d}\iint_{\R^{2d}}dzdz' (2\pi s)^{-d}e^{-\frac{\abs{z-x}^2}{2s}}e^{-\frac{\abs{z'-x'}^2}{2s}}\abs{z-z'}^{2\alpha-d}\leq  Cs^{\frac{2\alpha-d}{2}}.$$
    Combining with \eqref{d(x,x')large}, we have shown 
    \begin{align}\label{est: J1}J_1(s)\leq C_4\left(1+s^{\frac{2\alpha-d}{2}}\right),\end{align}
    for some constant $C_4>0$ depending on $M$.

Now inserting estimates \eqref{est: J_4}, \eqref{est: J2} and \eqref{est: J1} into \eqref{decomp I_1}, we obtain

     $$I_1\leq C_M(1+s^{\frac{2\alpha-d}{2}}),\quad\mathrm{for\ all}\ s>0.$$
     which together with the estimates $I_2\leq C_{2,\alpha,M}$ and $I_3\leq C_{2,\alpha,M}$ for all $s>0$ completes the proof.
\end{proof}

\subsection{Upper Bound for $\mathcal{L}_1$: $t<\varepsilon$}

 We now turn our attention to the estimate of $\mathcal{L}_1$ (more specifically \eqref{main quantity for interation G}) in small time. This is where the global geometry of $M$ starts to play a role and the curvature condition is used. The main result of this section is Theorem \ref{theorem:short time L1 bound}.    \\

In order to state the main result of this section, we need to introduce some notation. Recall the definition of $G_t(x,y)$ and $G_{t,x,y}(s,z)$ in \eqref{def: G_t(x,y)} and \eqref{def: G_{t,x,y}(s,z)} respectively, we have
\begin{align}\label{decomp of G}
    G_{t,x_0,x}(s,z)=&\frac{G_{t-s}(x_0,z) G_s(z,x)}{G_t(x_0,x)}\nonumber\\
    \leq& \left(2\pi\frac{s(t-s)}{t}\right)^{-\frac{d}{2}}\exp{\left(\frac{\bm{d}(x_0,x)^2}{2t}-\frac{\bm{d}(x_0,z)^2}{2s}-\frac{\bm{d}(z,x)^2}{2(t-s)}\right)}\nonumber\\
    \quad &+\frac{\text{terms with one or no Gaussians}}{C}\nonumber\\
    \leq&\left(2\pi\frac{s(t-s)}{t}\right)^{-\frac{d}{2}}\exp{\left(\frac{\bm{d}(x_0,x)^2}{2t}-\frac{\bm{d}(x_0,z)^2}{2s}-\frac{\bm{d}(z,x)^2}{2(t-s)}\right)}\nonumber\\
    &+\left((2\pi(t-s))^{-\frac{d}{2}}e^{-\frac{\bm{d}(x_0,z)^2}{2(t-s)}}+(2\pi s)^{-\frac{d}{2}}e^{-\frac{\bm{d}(z,x)^2}{2s}}\right)+C_H\nonumber\\
    :=\,&\Xi_{t,x_0,x}(s,z)+f_{t,x_0,x}(s,z)+C_H.
\end{align}
In the sequel, to lighten the notation,  whenever there is no confusion we will use $\Xi(*)$ and $\Xi(*')$ (respectively, $f(*)$ and $f(*')$) for $\Xi_{t,x,y}(s,z)$ (respectively, $f_{t,x,y}(s,z)$) depending on the space variables being $x,y,z$ or $x',y',z'$. With this notation, we have
\begin{align}\label{GG bound by Xi f}
    G_{t,x_0,x}(s,z)G_{t,x_0',x'}(s,z')\leq & \Xi(*)\Xi(*')+f(*)f(*')+C_H^2\nonumber\\
    &+\Xi(*)f(*')+\Xi(*')f(*)+C_H[f(*)+f(*')+\Xi(*)+\Xi(*')].
\end{align}
    Denote the right-hand side of \eqref{GG bound by Xi f} by $R_{t,x_0,x,x_0',x'}(s,z,z')$. \\

\begin{theorem}\label{theorem:short time L1 bound}
    Assume $\frac{d}{2}>\alpha>\frac{d-2}{2}$.
    Define for each $s>0$,$$k_2(s):=\sup_{t\geq 2s}\sup_{x_0,x,x_0',x'\in M}\iint_{M^2 }dzdz'R_{t,x_0,x,x_0',x'}(s,z,z')\bm{d}(z,z')^{2\alpha-d}.$$
    If $M$ has non-positive sectional curvature, then
 $$k_2(s)\leq C_M(1+s^{\frac{2\alpha-d}{2}}),\quad\mathrm{for\ all}\ s>0,$$
for some positive constant depending on $M$. In addition, for all $t>0$
$$\mathcal{L}_1(t,x_0,x,x_0',x')\leq C_S G_t(x_0,x)G_t(x_0',x')\left(\int_0^t k_2(s)ds\right),$$
    where $C_S$ depends on $\alpha$ and $M$. 
\end{theorem}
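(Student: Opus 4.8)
The plan is to separate a routine reduction from the geometric heart of the argument. First I would reduce the asserted bound on $\mathcal{L}_1$ to the bound on $k_2(s)$, exactly in the spirit of the large-time case. Applying \eqref{bound: p by G} to both the unprimed and primed pairs of heat factors gives
\[
\mathcal{L}_1(t,x_0,x,x_0',x')\leq G_t(x_0,x)G_t(x_0',x')\int_0^t\! ds\iint_{M^2}\! dzdz'\,G_{t,x_0,x}(s,z)G_{t,x_0',x'}(s,z')\,G_{\alpha,\rho}(z,z'),
\]
and since $\alpha<d/2$, Proposition \ref{Prop: G_alpha} together with $d(z,z')\leq R_M$ gives $0\leq G_{\alpha,\rho}(z,z')\leq C_M d(z,z')^{2\alpha-d}$. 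Split $\int_0^t=\int_0^{t/2}+\int_{t/2}^t$. On $[0,t/2]$ one has $t\geq 2s$, so \eqref{GG bound by Xi f} yields $G_{t,x_0,x}(s,z)G_{t,x_0',x'}(s,z')\leq R_{t,x_0,x,x_0',x'}(s,z,z')$, and integrating against $d(z,z')^{2\alpha-d}$ bounds the inner integral by $C_M k_2(s)$ (the pair $(t,s)$ is admissible in the supremum defining $k_2$). For $\int_{t/2}^t$ I would substitute $s\mapsto t-s$ and use the identity $G_{t,x_0,x}(t-s,z)=G_{t,x,x_0}(s,z)$, immediate from the symmetry of $G_\sigma$ in its two space arguments; this reduces it to the same estimate with $(x_0,x),(x_0',x')$ replaced by $(x,x_0),(x',x_0')$, hence again $\leq C_M\int_0^{t/2}k_2(s)ds$ since $k_2$ takes the supremum over all four base points. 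As $k_2\geq 0$, this gives $\mathcal{L}_1\leq C_S G_t(x_0,x)G_t(x_0',x')\int_0^t k_2(s)ds$ for all $t>0$, with $C_S$ depending only on $\alpha$ and $M$.

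It remains to bound $\iint_{M^2}R_{t,x_0,x,x_0',x'}(s,z,z')\,d(z,z')^{2\alpha-d}\,dzdz'$ uniformly in $t\geq 2s$ and in the base points, and the only place where the curvature hypothesis is essential is the summand $\Xi(*)\Xi(*')$ of $R$. Recall $\Xi_{t,x_0,x}(s,z)=(2\pi v)^{-d/2}e^{-F_{s,t;x_0,x}(z)}$ with $v=s(t-s)/t$, that $F_{s,t;x_0,x}\geq 0$, and that $F_{s,t;x_0,x}$ vanishes exactly at the points $p_i=\gamma_i(\tfrac{s}{t}d(x_0,x))$, where $\gamma_1,\dots,\gamma_N$ are the minimizing geodesics from $x_0$ to $x$. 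The crux of the whole theorem, and the step I expect to be the main obstacle, is the Gaussian domination
\[
\Xi_{t,x_0,x}(s,z)\leq C_M\sum_{i=1}^{N}(2\pi v)^{-d/2}\exp\!\Big(-c_M\frac{d(z,p_i)^2}{v}\Big),\qquad 0<s<t,
\]
with $c_M,C_M>0$ and, crucially, $N\leq N_M$ uniformly over $x_0,x\in M$. For the uniform count I would pass to the Cartan--Hadamard universal cover, where geodesics are unique: minimizing geodesics from $x_0$ to $x$ biject with deck transformations moving a fixed lift of $x$ a distance exactly $d(x_0,x)\leq R_M$, and cocompactness of the deck action plus a Bishop--Gromov volume bound (Ricci is bounded below, $M$ being compact) forces the number of such to be bounded uniformly. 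For the exponential estimate the point is that $F_{s,t;x_0,x}$ grows at least quadratically away from its minimizers at the natural rate $1/v=t/(s(t-s))$: along each minimizing geodesic $F_{s,t;x_0,x}$ equals $\tfrac{1}{2v}d(\cdot,p_i)^2$ exactly, and in the transverse directions non-positive curvature makes it only more convex (Hessian comparison for $d(a,\cdot)^2$), so locally $F_{s,t;x_0,x}(z)\geq \tfrac{1}{2v}d(z,p_i)^2$; the genuine work, which is the content of the rest of Section 3.2, is to patch these local pictures into a global lower bound, using triangle comparison against Euclidean triangles and treating the cut loci of $x_0$ and of $x$ separately (this is exactly where the finiteness $N\le N_M$ is used). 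As a byproduct one gets $\int_M\Xi_{t,x_0,x}(s,z)dz\leq C_M$ by integrating the dominating Gaussians with \eqref{int Gaussian M}.

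Granting the Gaussian domination, I would finish by estimating the nine summands of $R$ against $d(z,z')^{2\alpha-d}$. The four built from $f$ and $C$ only are handled exactly as in the proof of Theorem \ref{theorem:large time L1 bound}: $C^2\iint d(z,z')^{2\alpha-d}\leq C^2m_0c_{\alpha,M}$ by \eqref{L1 bound for covar}; $\iint Cf(*)d(z,z')^{2\alpha-d}\leq Cc_{\alpha,M}\int_M f(*)dz\leq 2Cc_{\alpha,M}c_M$ by \eqref{L1 bound for covar} and \eqref{int Gaussian M}; and for $f(*)f(*')$ one splits $M^2$ over products of geodesic balls and their complements, bounds the boundary pieces by the Gaussian decay $\sigma^{-d/2}e^{-\delta^2/(2\sigma)}\leq c$, and on the central piece passes to normal coordinates where non-positive curvature gives $d(z,z')\geq|z-z'|$, reducing to $\sup_{a,a'}\iint_{\R^{2d}}(2\pi\sigma)^{-d/2}(2\pi\sigma')^{-d/2}e^{-|z-a|^2/(2\sigma)}e^{-|z'-a'|^2/(2\sigma')}|z-z'|^{2\alpha-d}dzdz'=\E\big|N(0,(\sigma+\sigma')I)+(a-a')\big|^{2\alpha-d}\leq C(\sigma+\sigma')^{(2\alpha-d)/2}$, which is finite because $\alpha>(d-2)/2$. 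For the five summands involving $\Xi$, I would replace each $\Xi(*)$ by its dominating sum of Gaussians from the previous paragraph: $\int_M C\Xi(*)d(z,z')^{2\alpha-d}dz'dz$ becomes bounded via \eqref{L1 bound for covar} and \eqref{int Gaussian M}, $\iint\Xi(*)f(*')d(z,z')^{2\alpha-d}$ becomes a finite sum of Gaussian--Gaussian integrals of the type just treated, and $\iint\Xi(*)\Xi(*')d(z,z')^{2\alpha-d}\leq C_M N_M^2\,k_1(v')$ for a variance $v'$ comparable to $v$, where $k_1$ is the function of Theorem \ref{theorem:large time L1 bound}, already known to satisfy $k_1(v')\leq C_{\alpha,M}(1+(v')^{(2\alpha-d)/2})$. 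In every Gaussian bound the relevant variance is $\asymp s$ (indeed $v=s(t-s)/t\in[s/2,s)$ for $t\geq 2s$, and the $\sigma$'s are $\geq s$), so, since $2\alpha-d<0$, each of the nine terms is $\leq C_M(1+s^{(2\alpha-d)/2})$; summing, $k_2(s)\leq C_M(1+s^{(2\alpha-d)/2})$, which together with the reduction above proves the theorem. Apart from the geometric estimate of the middle paragraph, every step is bookkeeping modeled on the large-time argument.
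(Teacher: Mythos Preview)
Your reduction of the $\mathcal{L}_1$ bound to the bound on $k_2(s)$ is correct and matches the paper exactly, as does your treatment of the summands of $R$ built only from $f$ and $C$. The structure of your argument for the $\Xi$-terms is also the right one. However, the precise Gaussian domination you state is false as written, and this is not a cosmetic issue.

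You claim $\Xi_{t,x_0,x}(s,z)\leq C_M\sum_{i=1}^N(2\pi v)^{-d/2}\exp(-c_M d(z,p_i)^2/v)$ with $p_i$ on the \emph{minimizing} geodesics and $c_M$ uniform in $x_0,x$. This cannot hold with a uniform $c_M>0$. On $\mathbb{T}^1$ take $x_0=0$, $x=\tfrac12-\epsilon$, $a=\tfrac12$, $z=\tfrac34$. There is a unique minimizing geodesic (the short arc), and $p_1\approx\tfrac14$, so $d(z,p_1)^2\approx\tfrac14$. But a direct computation gives $F_a(z,x_0,x)=\tfrac{\epsilon}{2}+O(\epsilon^2)$, so $F_a/d(z,p_1)^2\to 0$ as $\epsilon\to 0$. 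The point $z$ sits near the \emph{almost}-minimizing geodesic (the long arc), which is invisible to your sum; as $\epsilon\to 0$ it becomes minimizing and the problem disappears, but for each $\epsilon>0$ your bound fails. The paper's remedy is precisely to replace the minimizing geodesics by \emph{all} geodesics of length $\leq 2R_M$: one covers $M$ by ``sausages'' $S^i_{x_0x}$ indexed by these geodesics (Definition~\ref{sausages and restricted balls}, Remark~\ref{sausages cover $M$}), and Lemma~\ref{behavior of $F$ in sausages} gives $F_a(z,x_0,x)\geq d(z,\gamma_i(a))^2$ for $z\in S^i_{x_0x}$, using the CAT(0) inequality in the universal cover applied to the lifted triangle $\triangle\overline{x_0}\,\overline{x}\,\overline{z}$ with $\overline{d}(\overline{x_0},\overline{x})=L(\gamma_i)$ rather than $d(x_0,x)$. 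The uniform finiteness you need is then Lemma~\ref{Finite geodesic count} for geodesics of length $\leq 2R_M$ (your cover argument works verbatim with ``distance $\leq 2R_M$'' in place of ``distance exactly $d(x_0,x)$'').

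With that correction the remainder of your plan is exactly the paper's: on each sausage $\Xi(*)$ is dominated by a single Gaussian of variance $\asymp s$ centered at $\gamma_i(s/t)$, and the integrals (i)--(iii) reduce, after the $B^i(s)/C^i(s)$ split, to the $J_1,\dots,J_4$ computations already carried out in Theorem~\ref{theorem:large time L1 bound}.
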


\begin{remark}
The upper bound of $\mathcal{L}_1$  claimed in Theorem \ref{theorem:short time L1 bound} is indeed valid for all $t>0$ (not only for small $t<\varepsilon$). From the analysis below, we expect it to be sharp for small time $t$. However, it happens to match the upper bound obtained in Theorem \ref{theorem:large time L1 bound} for large time as well.
\end{remark}

The proof of Theorem \ref{theorem:short time L1 bound} requires a good understand of how the measure of a Brownian bridge (more precisely, the measure given by $G_{t,x,y}(s,z)$) is concentrated. From the decomposition in \eqref{decomp of G}, it is clear that
the main difficulty stems from the term involving $\Xi(*)$:
 we need to carefully study the quantity in the exponential of $\Xi(*)$, which is the function $F$ given in \eqref{func: F}, 
    $$-F_{s,t;x,y}(z)=\frac{\bm{d}(x,y)^2}{2t}-\frac{\bm{d}(x,z)^2}{2s}-\frac{\bm{d}(z,y)^2}{2(t-s)}=
    \frac{1}{2\frac{s(t-s)}{t}}\left(\frac{s}{t}\cdot\frac{t-s}{t}\bm{d}(x,y)^2-\frac{t-s}{t}\bm{d}(x,z)^2-\frac{s}{t}\bm{d}(z,y)^2\right).$$
Let $a=s/t$, so we have $0<a<1.$ 
The term inside the parenthesis is thus \begin{align}\label{def: F}-F_a(z,x,y):=a(1-a)\bm{d}(x,y)^2-(1-a)\bm{d}(y,z)^2-a\bm{d}(z,x)^2.\end{align}
On $\R^d$, elementary computation shows that $F_a(z,x,y)=\bm{d}(z,w)^2$, where $w$ is the point on the line segment connecting $x$ and $y$ satisfying $d(x,w)=a\bm{d}(x,y)$. It implies that the Euclidean Brownian bridge is concentrated around $w$. One certainly should not expect such an identity to hold on a general manifold, which makes further analysis of $F$ necessary. 
The analysis of $F$ is tied to the global geometry of $M$. We perform this analysis in the next section for compact $M$ with non-positive sectional curvature.
\subsubsection{Preparation in geometry and topology}

We start by recalling some notions and well-known facts in geometry and topology. Then we show that assuming non-positive sectional curvature, although there are infinitely many geodesics connecting $x$ and $y$, each geodesic is in a different homotopy class and there are only finitely many of them with length bounded by $L$ for any $L>0$. Moreover, $F_a(z,x,y)$  takes its minimum on those geodesics; thus the measure of $G_{t,x,y}(s,z)$ is concentrated around the minimums. More precise statement will be given in Lemmas \ref{Finite geodesic count} and \ref{behavior of $F$ in sausages} below. In what follows, we follow the convention that a geodesic $\gamma$ connecting $x$ and $y$ on $M$ is parametrized on $[0,1]$ with $\gamma(0)=x,\gamma(1)=y$.\\


\smallskip
Let $\Delta$ be a geodesic triangle connecting points $p,q,r$ in $M$. Suppose $\overline{\Delta}$ is a triangle with the same side lengths in $\R^2$ connecting points $\overline{p},\overline{q},\overline{r}$. Denote by $[pq]$ a geodesic connecting the points $p,q$. $\overline{x}\in [\overline{p}\,\overline{q}]$ is a {\it comparison point} of $x\in [pq]$ if $d(q,x)=\abs{\overline{q}-\overline{x}}$. Comparison points for other sides are defined similarly. We say $\Delta$ satisfies the {\it CAT(0) inequality} if for all $x,y\in \Delta$ and comparison points $\overline{x},\overline{y}\in \overline{\Delta}$, $$\bm{d}(x,y)\leq \abs{\overline{x}-\overline{y}}.$$
$M$ is a {\it CAT(0) space} if all geodesic triangles satisfy the CAT(0) inequality.

\begin{figure}[h]
    \centering
    \includegraphics[width=12cm, height=4cm]{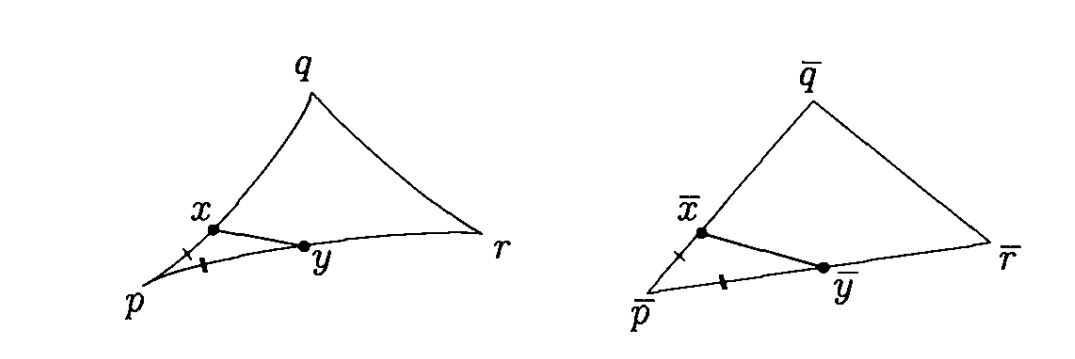}
    \caption{CAT(0) inequality, from \cite[Chapter II.1, figure 1.1]{BH}}
    \label{CAT_K Inequality}
\end{figure}

\begin{remark}\label{CAT(0) implies desired inequality}
Again we recall that on $\mathbb{R}^d$ one has $F_a(z,x,y)=|z-w|^2$, where $w$ is the point on the line segment connecting $x$ and $y$ satisfying $d(x,w)=a\bm{d}(x,y)$. Now let $\Delta$ be a geodesic triangle with vertices $x,y,z$ in a geodesic metric space $X$. If the CAT(0) inequality is satisfied by $\Delta$, applying it to $z$ and $w$, where $w\in[xy]$ satisfies $d(x,w)=a\bm{d}(x,y)$, we have $F_a(z,x,y)\geq \bm{d}(z,w)^2$.
\end{remark}

The above remark is the key observation that allows us to have a handle on $F_a(z,x,y)$. To see this, we need some facts about path homotopies taken from \cite[Appendix B]{JostRG}.\\

\smallskip
On a manifold $M$, paths $c_1,c_2:[0,1]\to M$ sharing the same endpoints are {\it homotopic} if there exists $H:[0,1]^2\to M$ continuous such that $H(t,0)=c_1(t)$ and $H(t,1)=c_2(t)$, $H(s,0)=c_1(0)=c_2(0)$ and $H(s,1)=c_1(1)=c_2(1)$. Denote by $c_1\simeq c_2$ if $c_1$ and $c_2$ are homotopic. It is clear that $\simeq$ gives an equivalence relation, and a homotopy class of curves consists of all curves in the homotopy equivalence class. Equivalence classes of homotopic paths with the same endpoints in $M$ form a group which does not depend on the choice of end points and is isomorphic to the group of homotopy-equivalent loops, which is 
    called the {\it fundamental group} denoted by $\pi_1(M)$.
    It is well known that the fundamental group of a manifold is countable \cite[Theorem 7.21]{LeeTopMfd}. 
    A space where the fundamental group is trivial is {\it simply connected}.\\

\smallskip

For two manifolds $M,\overline{M},$ a map $\pi:\overline{M}\to M$ is a {\it covering map} if for any $p\in M$, there exists a neighborhood $U_p$ of $p$ such that any connected component of $\pi^{-1}(U_p)$ is mapped homeomorphically onto $U_p$.  We say $\overline{M}$ is the {\it universal cover} of $M$ if $\overline{M}$ is simply connected. For any manifold, a universal cover is unique up to homeomorphism. Any path homotopy $H:[0,1]^2\to M$ lifts to a corresponding path homotopy $\overline{H}:[0,1]^2\to \overline{M}$ \cite[Proposition 1.30]{Hatcher}. A metric tensor on $M$ induces a metric tensor on $\overline{M}$, where the $\pi-$preimages of geodesics in $M$ are geodesics in $\overline{M}$ and $\pi$ becomes a local isometry \cite[Chapter I.3]{BH}. In particular, given a geodesic triangle $\triangle xyz$ on $M$ where the concatenation of two sides is homotopic to the third, there always exists a geodesic triangle $\triangle \overline{x}\,\overline{y}\,\overline{z}$ in $\overline{M}$ which is the pre-image of $\triangle xyz$ and the corresponding side lengths are the same. 

The following Cartan-Hadamard Theorem is standard in differential geometry.

\begin{theorem}(Cartan-Hadamard)\label{Cartan-Hadamard}
    If a manifold $M$ of dimension $d$ admits a metric tensor satisfying $\sec_M\leq 0$, the following holds for its universal cover $\overline{M}$:
    \begin{enumerate}
        \item $\overline{M}$ is diffeomorphic to $\R^d$ via $\exp_{\overline{p}}$, the exponential map based at any point $\overline{p}\in \overline{M}$.
        \item $\overline{M}$ equipped with the induced metric tensor from $M$ is a CAT(0) space.
        \item For any $p\in M$, $\exp_p:\overline{M}\cong T_pM\to M$ is a covering map.
    \end{enumerate}
\end{theorem}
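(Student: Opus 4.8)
The plan is to deduce all three assertions from three classical ingredients: (i) under $\sec_M\le 0$ there are no conjugate points along any geodesic, so each exponential map is a local diffeomorphism; (ii) a local isometry out of a complete Riemannian manifold is a covering map; and (iii) the Rauch comparison theorem, which upgrades the curvature bound into the metric convexity underlying CAT(0). Since $M$ is compact it is complete, so every $\exp_p:T_pM\to M$ is defined on all of $T_pM$.

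For (1) and (3), I would first show that each $\exp_p$ is a local diffeomorphism. Fix $p\in M$, $v\in T_pM$, and set $\gamma(t)=\exp_p(tv)$; then $d(\exp_p)_v$ fails to be invertible precisely when there is a nontrivial Jacobi field $J$ along $\gamma$ with $J(0)=J(1)=0$. For any Jacobi field with $J(0)=0$ one computes $\frac{d^{2}}{dt^{2}}|J(t)|^{2}=2|J'(t)|^{2}-2\langle R(J,\gamma')\gamma',J\rangle\ge 2|J'(t)|^{2}\ge 0$, the curvature term having the favorable sign because $\sec_M\le 0$; hence $|J|^{2}$ is convex, vanishes at $0$ with nonnegative derivative, and is therefore strictly positive for $t>0$ unless $J\equiv 0$. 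Thus $\gamma$ has no conjugate points and each $\exp_p$ is a local diffeomorphism. Next pull the metric back, $g_p:=\exp_p^{*}g$, so that $\exp_p:(T_pM,g_p)\to(M,g)$ is a local isometry and $(T_pM,g_p)$ again satisfies $\sec\le 0$. The rays $t\mapsto tv$ map to the $g$-geodesics $t\mapsto\exp_p(tv)$, hence are $g_p$-geodesics defined on all of $\R$, so by Hopf--Rinow (geodesic completeness at one point suffices) $(T_pM,g_p)$ is complete. Since a local isometry from a complete manifold is a covering map (e.g.\ \cite[Ch.~6]{PetersenRG}), assertion (3) follows for every $p$. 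Because $T_pM\cong\R^d$ is simply connected, $(T_pM,g_p)$ with $\exp_p$ is a universal cover of $M$, so by uniqueness we identify $\overline M$ with $(T_pM,g_p)$; in particular $\overline M$ is diffeomorphic to $\R^d$ and is itself complete, simply connected, and non-positively curved. Running the same argument inside $\overline M$: for any $\overline q\in\overline M$, $\exp_{\overline q}$ is a local diffeomorphism and a covering map onto $\overline M$, and since $\overline M$ is simply connected this covering is one-sheeted, so $\exp_{\overline q}$ is a diffeomorphism --- this is assertion (1).

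For (2), I would invoke the Rauch comparison theorem: since $\sec_{\overline M}\le 0=\sec_{\R^d}$ and $\overline M$ has no conjugate points, every Jacobi field $J$ along a geodesic of $\overline M$ with $J(0)=0$ satisfies $|J(t)|\ge t|J'(0)|$, the value in the flat comparison space. Integrating this infinitesimal estimate along a geodesic variation shows that for any two geodesics $\sigma_1,\sigma_2:[0,1]\to\overline M$ issuing from a common point, $t\mapsto d(\sigma_1(t),\sigma_2(t))$ is convex, so in particular the vertex angles of a geodesic triangle are no larger than those of its Euclidean comparison triangle of equal side lengths. Given a geodesic triangle in $\overline M$ and such a comparison triangle, applying this convexity twice --- once to the two sides issuing from a vertex, then to two sides issuing from a point on one of them --- yields $d(x,y)\le|\overline x-\overline y|$ for all points $x,y$ of the triangle and their comparison points, which is exactly the CAT(0) inequality. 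Hence $\overline M$ is a CAT(0) space, proving (2).

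The genuinely technical point is (2): converting the infinitesimal Rauch estimate into the global triangle comparison requires the second-variation/convexity argument together with careful bookkeeping of comparison points, whereas (1) and (3) are essentially mechanical once the no-conjugate-points computation and the ``complete local isometry $\Rightarrow$ covering map'' lemma are in hand. Since the statement is entirely classical, a perfectly legitimate alternative is simply to cite it: (1) and (3) are the Cartan--Hadamard theorem as in \cite[Ch.~6]{PetersenRG}, and (2) is the standard fact that Hadamard manifolds are CAT(0), see \cite[Ch.~II.1 and II.1A]{BH} or \cite{JostRG}.
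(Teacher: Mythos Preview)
Your proposal is correct. The paper's own proof is simply a list of citations --- Jost's \emph{Riemannian Geometry and Geometric Analysis} for (1), Bridson--Haefliger for (2), and Lee's \emph{Riemannian Manifolds} for (3) --- so your closing remark that one may just cite the result is exactly what the paper does.

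The difference is only in level of detail: you supply a self-contained sketch (no conjugate points $\Rightarrow$ $\exp$ is a local diffeomorphism $\Rightarrow$ covering map via the complete-local-isometry lemma, then Rauch comparison and convexity of $t\mapsto d(\sigma_1(t),\sigma_2(t))$ for the CAT(0) inequality), whereas the paper treats the theorem as background and defers entirely to the literature. Your sketch is sound and is the standard textbook argument behind those citations; for the purposes of this paper either your sketch or the bare citations would be acceptable, since the theorem is used as a black box in the subsequent analysis.
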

\begin{proof}
  See \cite[Corollary 6.9.1]{JostRG} for statement 1, \cite[Theorem II.4.1]{BH} for statement 2, and \cite[Theorem 12.8]{leeRmMfd} for statement 3.
\end{proof}
\begin{remark}\label{Universal Cover Distance}
    Let $\bm{d}$ be the distance function on $M$ and $\bm{\overline{d}}$ be the distance function associated with the induced metric on $\overline{M}$. Because $\exp_p$ is a local isometry for any $p\in M$, we must have $\bm{\overline{d}}(\overline{x},\overline{y})\geq \bm{d}(x,y)$ for any $x,y\in M$ and any two lifts $\overline{x}$ of $x$ and $\overline{y}$ of $y$. 
\end{remark}

\begin{lemma}\label{a geodesic exists in all homotopy classes}
    Suppose $M$ is a Riemannian manifold with non-positive sectional curvature.  Let $x,y\in M$. In every homotopy class of curves connecting $x$ and $y$, a unique geodesic exists and minimizes length over curves with endpoints $x,y$ in that homotopy class.
\end{lemma}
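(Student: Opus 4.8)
The plan is to pass to the universal cover $\pi:\overline{M}\to M$ and reduce everything to the elementary situation in a Cartan--Hadamard manifold, where geodesics between two points are unique and minimizing. By Theorem \ref{Cartan-Hadamard}, with the induced metric $\overline{M}$ is simply connected, diffeomorphic to $\R^d$ via $\exp_{\overline p}$ for any $\overline p$, and CAT(0); it is complete since $M$ is. The first step is to set up the classical dictionary between homotopy classes of paths from $x$ to $y$ and points of the fiber $\pi^{-1}(y)$. Fix once and for all a lift $\overline x\in\pi^{-1}(x)$. Every path $c$ from $x$ to $y$ lifts uniquely to a path $\overline c$ in $\overline M$ with $\overline c(0)=\overline x$ (path-lifting for covering maps), and by the homotopy lifting property \cite[Proposition 1.30]{Hatcher} the endpoint $\overline c(1)\in\pi^{-1}(y)$ depends only on the homotopy class $[c]$, not on the chosen representative. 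Conversely, since $\overline M$ is simply connected, any two paths in $\overline M$ from $\overline x$ to a fixed point $\overline y$ are homotopic rel endpoints, and projecting such a homotopy by $\pi$ shows their projections to $M$ are homotopic rel endpoints. Hence the assignment $[c]\mapsto \overline c(1)=:\overline y_{[c]}$ is a bijection between homotopy classes of curves from $x$ to $y$ and the fiber $\pi^{-1}(y)$.

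Next, work in $\overline M$ with the fixed pair of points $\overline x$ and $\overline y_{[c]}$. Since $\exp_{\overline x}:T_{\overline x}\overline M\to\overline M$ is a diffeomorphism, there is exactly one vector $v$ with $\exp_{\overline x}(v)=\overline y_{[c]}$, hence exactly one constant-speed geodesic $\overline\gamma:[0,1]\to\overline M$ from $\overline x$ to $\overline y_{[c]}$ (namely $t\mapsto\exp_{\overline x}(tv)$). Because $\overline M$ is complete, Hopf--Rinow (equivalently, the CAT(0) property in Theorem \ref{Cartan-Hadamard}(2)) provides a length-minimizing geodesic between these two points; being a geodesic from $\overline x$ to $\overline y_{[c]}$, it must coincide with $\overline\gamma$. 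Thus $\overline\gamma$ is simultaneously the unique geodesic and the unique shortest curve from $\overline x$ to $\overline y_{[c]}$, with $\mathrm{length}(\overline\gamma)=\overline d(\overline x,\overline y_{[c]})$ (notation as in Remark \ref{Universal Cover Distance}). Set $\gamma:=\pi\circ\overline\gamma$. Since $\pi$ is a local isometry it carries geodesics to geodesics, so $\gamma$ is a geodesic in $M$ from $x$ to $y$; and since $\overline\gamma$ is a path from $\overline x$ to $\overline y_{[c]}$, the dictionary above places $\gamma$ in the class $[c]$.

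It remains to verify minimality and uniqueness within $[c]$. Given any curve $c'$ from $x$ to $y$ in the class $[c]$, lift it to $\overline{c'}$ with $\overline{c'}(0)=\overline x$; by the dictionary $\overline{c'}(1)=\overline y_{[c]}$, and since $\pi$ is a local isometry one has $\mathrm{length}(c')=\mathrm{length}(\overline{c'})$, so
\[
\mathrm{length}(c')=\mathrm{length}(\overline{c'})\ \ge\ \overline d(\overline x,\overline y_{[c]})=\mathrm{length}(\overline\gamma)=\mathrm{length}(\gamma),
\]
which shows $\gamma$ minimizes length in its homotopy class. For uniqueness, if $\gamma_1:[0,1]\to M$ is another geodesic from $x$ to $y$ in $[c]$, its lift from $\overline x$ is a geodesic in $\overline M$ (local isometry again) ending, by the dictionary, at $\overline y_{[c]}$; uniqueness of geodesics between $\overline x$ and $\overline y_{[c]}$ in $\overline M$ forces this lift to equal $\overline\gamma$, hence $\gamma_1=\gamma$ (up to constant-speed reparametrization). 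The argument is largely covering-space bookkeeping once Cartan--Hadamard is in hand; the only points requiring genuine care are the bijection between homotopy classes and fiber points and the check that the projected geodesic $\gamma$ truly lands back in the prescribed class, both of which rely on the simple connectedness of $\overline M$.
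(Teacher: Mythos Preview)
Your proof is correct. The paper does not give its own argument but simply refers to \cite[Theorem 6.9.1]{JostRG}; the route you take---lifting to the universal cover, using the bijection between homotopy classes of paths from $x$ to $y$ and points of $\pi^{-1}(y)$, and invoking uniqueness and minimality of geodesics in the Cartan--Hadamard manifold $\overline{M}$---is exactly the standard proof one finds in that reference.
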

\begin{proof} See \cite[Theorem 6.9.1]{JostRG}.
\end{proof}

\begin{lemma}\label{Finite geodesic count}
Fix $L>0$. For $x,y\in M$,  denote by $N_L(x,y)$ the number of geodesics connecting $x$ and $y$ with length bounded by $L$. Assume $M$ has non-positive sectional curvature, then $0<N_L(x,y)<+\infty$. In addition, when $M$ is compact, $N_L(x,y)$ is uniformly bounded in $x,y\in M$.
\end{lemma}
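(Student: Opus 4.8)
The plan is to lift the problem to the universal cover $\overline{M}$ and exploit the bijection between geodesics from $x$ to $y$ on $M$ and geodesics from a fixed lift $\overline{x}$ to the various lifts of $y$ on $\overline{M}$. First I would fix $x,y\in M$ and a lift $\overline{x}\in \overline{M}$. By Lemma \ref{a geodesic exists in all homotopy classes}, in each homotopy class of curves from $x$ to $y$ there is a unique length-minimizing geodesic, and conversely every geodesic from $x$ to $y$ represents some homotopy class; since $\pi_1(M)$ is countable this already shows the set of geodesics from $x$ to $y$ is countable and nonempty (so $N_L(x,y)\geq 1$ because a minimizing geodesic always exists on a compact manifold). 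The key structural fact, from Theorem \ref{Cartan-Hadamard}(3), is that $\exp_{x}:\overline{M}\to M$ (identifying $\overline{M}$ with $T_xM$ via $\exp_{\overline x}$) is a covering map that is a local isometry; a geodesic $\gamma$ from $x$ to $y$ of length $\ell$ lifts uniquely to a geodesic $\overline{\gamma}$ in $\overline{M}$ starting at $\overline{x}$, of the same length $\ell$, ending at \emph{some} lift $\overline{y}$ of $y$, and distinct geodesics on $M$ lift to geodesics ending at distinct points of the fiber $\pi^{-1}(y)$. Moreover, since $\overline{M}$ is CAT(0) (hence uniquely geodesic), each such $\overline{\gamma}$ is \emph{the} geodesic from $\overline{x}$ to that lift $\overline{y}$, and its length is exactly $\overline{d}(\overline{x},\overline{y})$. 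Therefore
\[
N_L(x,y)=\#\{\,\overline{y}\in \pi^{-1}(y)\ :\ \overline{d}(\overline{x},\overline{y})\leq L\,\}.
\]

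Next I would bound this count. The fiber $\pi^{-1}(y)$ is a discrete subset of $\overline{M}$, and two distinct points of it are at distance at least $2 i_M$ apart (a lift of the injectivity radius: if $\overline{d}(\overline{y}_1,\overline{y}_2)<2i_M$ for distinct lifts, projecting the minimizing geodesic between them gives a geodesic loop at $y$ of length $<2i_M$, contradicting the definition of $i_M$ — here one uses that $\pi$ is a local isometry and $i_M>0$ on a compact manifold). Hence the balls $B_{\overline{M}}(\overline{y}_i, i_M)$ around the lifts $\overline{y}_i$ with $\overline{d}(\overline{x},\overline{y}_i)\leq L$ are pairwise disjoint and all contained in $B_{\overline{M}}(\overline{x},L+i_M)$. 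A volume comparison in $\overline{M}$ then yields
\[
N_L(x,y)\cdot \inf_i \mathrm{vol}\,B_{\overline{M}}(\overline{y}_i,i_M)\ \leq\ \mathrm{vol}\,B_{\overline{M}}(\overline{x},L+i_M).
\]
Since $\overline{M}$ has non-positive curvature, the Bishop–Gromov / Günther volume comparison gives $\mathrm{vol}\,B_{\overline{M}}(\cdot, r)\geq \omega_d r^d$ (Euclidean lower bound, as $\sec\leq 0$), and an upper bound $\mathrm{vol}\,B_{\overline{M}}(\cdot, R)\leq V(R)$ where $V(R)$ depends only on $R$, $d$, and a lower Ricci bound for $\overline M$ — which exists and is uniform because $M$ is compact. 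Thus $N_L(x,y)\leq V(L+i_M)/(\omega_d i_M^{\,d})$, a bound independent of $x$ and $y$. This proves both $N_L(x,y)<\infty$ and the uniform bound on compact $M$.

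The main obstacle I anticipate is making the ``distinct lifts of $y$ are $\geq 2i_M$ apart, and lifts of geodesics are geodesics realizing $\overline d$'' step fully rigorous — i.e.\ carefully citing that $\exp_x$ is a covering local isometry so that path-lifting preserves length and the geodesic property, that the CAT(0) structure makes $\overline{\gamma}$ the unique (hence minimizing) geodesic to its endpoint, and that a short geodesic loop downstairs would violate $i_M>0$; a small subtlety is that $i_M$ is the injectivity radius of $M$, so the correct separation statement is in terms of geodesic loops at $y$ rather than naively $2i_M$, but any positive uniform lower bound suffices for the volume-packing argument. The volume comparison inputs (Günther lower bound from $\sec\leq 0$, Bishop–Gromov upper bound from the uniform lower Ricci bound available by compactness) are standard and I would simply invoke them with references such as \cite[Chapter 9]{PetersenRG}. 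Everything else is bookkeeping about covering spaces already set up in the preceding paragraphs of the paper.
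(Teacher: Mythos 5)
Your proof is correct and begins with exactly the same reduction as the paper: lift to the universal cover $\overline{M}$, identify geodesics from $x$ to $y$ of length $\le L$ with lifts of $y$ in $\overline{B}(\overline{x},L)$, and observe that distinct lifts of $y$ are at least $2i_M$ apart because a shorter minimizing segment between them would project to a geodesic loop at $y$ shorter than $2i_M$, contradicting the injectivity radius bound. Where you diverge is in the final counting step. The paper first deduces finiteness from discreteness of the fiber inside a compact ball, and then proves the \emph{uniform} bound by a separate compactness/contradiction argument: assuming $\sup_{x,y}N_L(x,y)=\infty$, extracting a convergent subsequence $(x_n,y_n)\to(x,y)$, and showing that lifts of the nearby $y_n$'s force infinitely many lifts of $y$ into a fixed ball, contradicting the $2i_M$-separation. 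You instead run a volume-packing argument on $\overline{M}$: the balls $B_{\overline{M}}(\overline{y}_i,i_M)$ are pairwise disjoint and all sit inside $B_{\overline{M}}(\overline{x},L+i_M)$; the G\"unther comparison ($\sec\le 0$) gives the Euclidean lower bound $\omega_d i_M^d$ for each small ball, while the Bishop--Gromov comparison with the uniform lower Ricci bound inherited from compact $M$ gives an upper bound $V(L+i_M)$ for the big ball, yielding an explicit estimate $N_L(x,y)\le V(L+i_M)/(\omega_d i_M^d)$ independent of $x,y$. This is a cleaner route: it produces both finiteness and uniformity in one stroke, with a quantitative bound, at the cost of invoking comparison geometry that the paper's argument avoids. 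Your worry about the precise meaning of $i_M$ is well-founded but correctly resolved: any positive uniform lower bound on the pairwise distance between fiber points suffices for the packing, and $2i_M$ works since non-positive curvature rules out conjugate points so the injectivity radius at $y$ is exactly half the length of the shortest geodesic loop at $y$.
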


\begin{proof}
    Take a lift $\overline{x}$ of $x$ and consider $B(\overline{x},L)$ in $\overline{M}$. 
    Since $M$ has non-positive sectional curvature Cartan-Hadamard Theorem implies that each geodesic connecting $x$ and $y$ with length shorter than $L$ corresponds to a unique lift of $y$ in $B(\bar{x},L)$. Thus the first statement in the theorem is equivalent to bounding the number of lifts of $y$ inside $B(\overline{x},L)$. By the definition of the injectivity radius $i_M$, $2i_M$ is the shortest length of any geodesic loop. Thus for any two lifts $\overline{y}$, $\overline{y}'$ of $y$ we must have $\bm{\overline{d}}(\overline{y},\overline{y}')\geq 2i_M$. This implies that for any chosen lift $\overline{y}$ of $y$, $B(\overline{y},i_M)$ has no other lifts of $y$ in it. Thus lifts of $y$ in $B(\bar{x},L)$ are isolated, hence could only be finite.
   \vspace{2mm}
    
    The uniform bound (in $x$ and $y$) will be proved by contradiction and uses compactness of $M$. Suppose $\sup_{x,y\in M}N_L(x,y)=+\infty$, then there is a sequence $(x_n,y_n)\subset M\times M$ such that $N_L(x_n,y_n)\uparrow +\infty$ as $n$ tends to infinity.
 Since $M$ is compact, this sequence has at least one limit point which we denote by $(x,y)$. Without loss of generality, we assume $(x_n,y_n)\to (x,y)$. Pick and fix a lift $\overline{x}$ of $x$. In what follows, we will construct infinitely many lifts of $y$ in a closed ball centered at $\overline{x}$, which  contradicts the fact that all lifts of $y$ should be $2i_M$ apart.

 \vspace{2mm}
 
 First recall that $R_M$ is the diameter of $M$. All lifts of $y_n$ are inside a ball of radius $L+R_M$ centered at $\overline{x}$. On the other hand, since $(x_n,y_n)\to (x,y)$, we have $d(y_n,y)<i_M$ for $n>N$, where $N$ depends on $i_M$. We now show there are infinite many lifts of $y$ inside $\overline{B}(\overline{x},L+R_M+i_M)$. Indeed, since the covering map is locally isometric to $M$, for any fixed $n>N$ each lift of $y_n$ must correspond to a unique lift of $y$ at most $i_M$ away from its corresponding lift of $y_n$.  Moreover, since $n>N$, all these lifts of $y$ lie inside the ball $\overline{B}(\overline{x},L+R_M+i_M)$. By assumption, there are at least $N_L(x_n,y_n)$ number lifts of $y$ for each $n$, and $N_L(x_n,y_n)\uparrow\infty$. The proof is thus completed. 
\end{proof}

\begin{remark}
    Lemma \ref{Finite geodesic count} is false if we do not restrict the lengths of geodesics. For example, consider $(0,0),(\frac{1}{2},\frac{1}{2})\in\mathbb{T}^2=\R^2/\Z^2$ identified with $[0,1)\times [0,1)$. Then any line $y=\frac{1}{2^n}x,n\in \N$ in $\R^2$ produces a geodesic connecting $(0,0)$ to $(\frac{1}{2},\frac{1}{2})$ when projected down to $\mathbb{T}^2$. Obviously the lengths of these geodesic segments go to infinity as $n\uparrow+\infty$.
\end{remark}

\begin{definition}\label{sausages and restricted balls}
    For any $x,y\in M$, let $\Gamma_{xy}=\set{\gamma_i}_{i=1}^{N(x,y)}$  be the collection of geodesics up to length $2R_M$ connecting them, and denote by $\overline{xy}$ a (not necessarily unique) minimizing geodesic connecting them. For each $\gamma_i\in\Gamma_{xy}$,  the {Sausage $S^i_{xy}$ around $\gamma_i$} is defined by $$S^i_{xy}:=\set{z\in M: \mathrm{there\ exists\ }\overline{xz}\ \mathrm{and}\ \overline{zy}\text{ such that }\overline{xz}\sqcup\overline{zy}\simeq \gamma_i}.$$
    For $a\in[0,1]$ and $\delta={i_M}/{8}$, the {restricted ball around $\gamma_i(a)$} is defined by $B^i_{xy}(a,\delta):=B(\gamma_i(a),\delta)\cap S^i_{xy}$, and the {set outside the restricted ball in the sausage} will be denoted $C^i_{xy}(a,\delta):=S^i_{xy}\setminus B^i_{xy}(a,\delta)$.
\end{definition}

\begin{remark}\label{sausages cover $M$}
    For any fixed $x,y\in M$, since the lengths of minimizing geodesics $\overline{xz}$ and $\overline{zy}$ are bounded by $R_M$, $\overline{xz}\sqcup\overline{zy}$ has length no greater than $2R_M$.  Lemma \ref{a geodesic exists in all homotopy classes} then implies that any  $z\in M$ must be in $S^i_{xy}$ for some $i=1,\dots,N_M$. Thus $\set{S^i_{xy}}_{i=1}^{N(x,y)}$ covers $M$. 
\end{remark}

\begin{remark}\label{topology of sausages}Thanks again to Lemma \ref{a geodesic exists in all homotopy classes}, each homotopy class contains a unique geodesic that minimizes distance among all curves in that homotopy class.
    Therefore a point not in the cut-locus of either $x$ or $y$ can only be in one sausage. 
    This implies all sausages are measurable sets. Indeed, subtracting the cut-locus of $x$ and $y$ which has measure $0$, each sausage is an open set.  
\end{remark}

\begin{remark}\label{lifting relevant triangles in sausages}
In general, lifts of a triangle in $M$ may not be a triangle in $\overline{M}$. The construction of $S^i_{xy}$ ensures that for any $z\in S^i_{xy}$ the triangle formed by $\overline{xz}\sqcup\overline{zy}\sqcup\gamma_i$ can be lifted to a geodesic triangle $\triangle \overline{x}\,\overline{y}\,\overline{z}$ in $\overline{M}$ with the same side lengths. 
Since $\overline{M}$ is a $CAT(0)$ space by the Cartan-Hadamard theorem, $\triangle \overline{x}\,\overline{y}\,\overline{z}$ satisfies the $CAT(0)$ inequality. This fact is crucial for our analysis of $F_a(z,x,y)$.
\end{remark}

Now we can state the main result of this section. Recall the definition of $S^i_{xy}$ and $C^i_{xy}$ in Definition~\ref{sausages and restricted balls}.
\begin{lemma}\label{behavior of $F$ in sausages}
    Fix $x,y\in M$. For any $1\leq i\leq N(x,y)$, $z\in S^i_{xy}$, and $a\in(0,1)$ we have 
    \begin{align}\label{eq: hebavior of $F$ in sausages}F_a(z,x,y)\geq \max_{i:\,z\in S^i_{xy}}\bm{d}(z,\gamma_i(a))^2.
    \end{align}
    In particular, we have  $F_a(z,x,y)\geq \delta^2$ if $z\in C^i_{xy}(a,\delta)$ for some $i=1,\dots,N(x,y)$.
\end{lemma}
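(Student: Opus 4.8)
The plan is to exploit the lift to the universal cover $\overline{M}$ and the CAT(0) inequality. Fix $x,y\in M$, fix $i$ with $z\in S^i_{xy}$, and pick minimizing geodesics $\overline{xz}$ and $\overline{zy}$ with $\overline{xz}\sqcup\overline{zy}\simeq\gamma_i$. By Remark~\ref{lifting relevant triangles in sausages}, the geodesic triangle formed by $\overline{xz}$, $\overline{zy}$, and $\gamma_i$ lifts to a genuine geodesic triangle $\triangle\,\overline{x}\,\overline{y}\,\overline{z}$ in $\overline{M}$ with the same side lengths; in particular $\overline{d}(\overline{x},\overline{z})=d(x,z)$, $\overline{d}(\overline{z},\overline{y})=d(z,y)$, and $\overline{d}(\overline{x},\overline{y})=\mathrm{length}(\gamma_i)$. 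Note this last quantity may be \emph{larger} than $d(x,y)$, but that only helps: since $d(x,y)^2\le \overline{d}(\overline{x},\overline{y})^2$ and the coefficient $a(1-a)$ in front of $d(x,y)^2$ in \eqref{def: F} is positive, we get
\begin{align*}
-F_a(z,x,y)&=a(1-a)d(x,y)^2-(1-a)d(z,y)^2-a\,d(z,x)^2\\
&\le a(1-a)\overline{d}(\overline{x},\overline{y})^2-(1-a)\overline{d}(\overline{z},\overline{y})^2-a\,\overline{d}(\overline{z},\overline{x})^2.
\end{align*}

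Next I would run the Euclidean identity together with the CAT(0) comparison exactly as in Remark~\ref{CAT(0) implies desired inequality}. Let $\overline{w}$ be the point on the lifted side $[\overline{x}\,\overline{y}]$ with $\overline{d}(\overline{x},\overline{w})=a\,\overline{d}(\overline{x},\overline{y})$. Writing $\overline{F}_a$ for the analogue of $F_a$ computed with the distances in $\overline{M}$ along this triangle, the elementary planar computation (valid since the three relevant distances are realized by an honest geodesic triangle) gives that the right-hand side above equals $-\,\overline{d}(\overline{z},\overline{w})^2\cdot\big(\tfrac{t}{s(t-s)}\big)^{-1}$-style rearrangement — more precisely, $\overline{F}_a(\overline{z},\overline{x},\overline{y})\ge \overline{d}(\overline{z},\overline{w})^2$ by applying the CAT(0) inequality to the pair $\overline{z},\overline{w}$ with $\overline{w}$ on side $[\overline{x}\,\overline{y}]$ and its comparison point $w'$ on the Euclidean comparison triangle satisfying $|\overline{p}'-w'|=a|\overline{p}'-\overline{q}'|$, for which the Euclidean value is exactly $|\,\overline{z}'-w'\,|^2$ and the planar identity $F_a^{\mathrm{eucl}}=|\cdot-w'|^2$ applies. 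Hence $F_a(z,x,y)\ge \overline{d}(\overline{z},\overline{w})^2$.

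It remains to descend: since $\pi=\exp_x$ (equivalently the covering projection) is a local isometry, distances do not increase under $\pi$, so $\overline{d}(\overline{z},\overline{w})\ge d(z,\pi(\overline{w}))$ by Remark~\ref{Universal Cover Distance}. Finally $\pi(\overline{w})=\gamma_i(a)$: indeed $\pi$ maps the lifted side $[\overline{x}\,\overline{y}]$ isometrically onto $\gamma_i$ preserving the parametrization proportional to arc length, and $\overline{w}$ is the point at fractional arc length $a$ along it, so its image is $\gamma_i(a)$. Therefore $F_a(z,x,y)\ge d(z,\gamma_i(a))^2$. Taking the maximum over all $i$ with $z\in S^i_{xy}$ yields \eqref{eq: hebavior of $F$ in sausages}. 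For the last assertion: if $z\in C^i_{xy}(a,\delta)=S^i_{xy}\setminus B(\gamma_i(a),\delta)$ then $d(z,\gamma_i(a))\ge\delta$, so $F_a(z,x,y)\ge d(z,\gamma_i(a))^2\ge\delta^2$.

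The step I expect to be the main obstacle is making the CAT(0) comparison rigorous in the regime where $\gamma_i$ is \emph{not} minimizing, i.e. $\overline{d}(\overline{x},\overline{y})>d(x,y)$: one must be careful that the planar identity $F^{\mathrm{eucl}}_a(z',x',y')=|z'-w'|^2$ is applied to the comparison triangle of the \emph{lifted} triangle (whose base is $\gamma_i$'s length, not $d(x,y)$), and then separately use positivity of $a(1-a)$ to replace $d(x,y)^2$ by the larger lifted value at the very start. A second minor subtlety is checking that $\overline{xz}\sqcup\overline{zy}\sqcup\gamma_i$ genuinely lifts to a geodesic triangle (not merely a geodesic triangle with a vertex where the sides need not be distinct geodesics) — this is exactly what the definition of $S^i_{xy}$ via the homotopy condition guarantees, so I would cite Remark~\ref{lifting relevant triangles in sausages} rather than re-derive it.
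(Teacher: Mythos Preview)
Your proposal is correct and follows essentially the same route as the paper: replace $d(x,y)^2$ by $L(\gamma_i)^2=\overline{d}(\overline{x},\overline{y})^2$ using positivity of $a(1-a)$, lift the triangle $\overline{xz}\sqcup\overline{zy}\sqcup\gamma_i$ to $\overline{M}$ via Remark~\ref{lifting relevant triangles in sausages}, apply the CAT(0) comparison of Remark~\ref{CAT(0) implies desired inequality} to the lifted $\overline{F}_a$, and then descend via Remark~\ref{Universal Cover Distance}. The only cosmetic difference is that the paper states the inequality $F_a(z,x,y)\ge \overline{F}_a(\overline{z},\overline{x},\overline{y})$ directly rather than phrasing it via $-F_a$, and your discussion of ``potential obstacles'' at the end is unnecessary since you have already handled the non-minimizing-$\gamma_i$ case correctly in the first displayed inequality.
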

\begin{proof}
    The second statement follows trivially from the first, so it suffices to prove the first. 

    \smallskip
    Suppose $z\in S^i_{xy}$ for some $i=1,\dots,N(x,y)$. By the definition of $S^i_{xy}$, there exist minimizing geodesics $\overline{xz},\overline{zy}$ such that the $\overline{xz}\sqcup\overline{zy}\simeq \gamma_i$.   For any curve $\gamma:[0,1]\ra M$, denote by $L(\gamma)$  the length of $\gamma$. 
    Recall the definition of $F_a(z,x,y)$ in \eqref{def: F}. Since $L(\gamma_i)\geq \bm{d}(x,y)$, we have for every $a\in(0,1)$ 
\begin{align}\label{m step 1}
F_a(z,x,y)\geq (1-a)\bm{d}(x,z)^2+a\bm{d}(z,y)^2-a(1-a)L(\gamma_i)^2.
\end{align}
    Thanks to Remark \ref{lifting relevant triangles in sausages},  we can lift the geodesic triangle $\triangle xyz$ onto a geodesic triangle $\triangle\overline{x}\,\overline{y}\,\overline{z}$ of the same side lengths in $T_xM$, which is a $CAT(0)$ space. For each $a\in(0,1)$, let $\overline{w}_i(a)$ be the lift of $\gamma_i(a)$ to this triangle.
    By definition, $\bm{\overline{d}}(\overline{x},\overline{y})=L(\gamma_i)$. As noted in Remark \ref{lifting relevant triangles in sausages}, $\triangle\overline{x}\,\overline{y}\,\overline{z}$ satisfies the CAT(0) inequality. If we define $\overline{F}_a$ using the universal cover distance $\bm{\overline{d}}$ the same way as $F_a$:
\begin{align}\label{m step 2}
\overline{F}_a(\overline{z},\overline{x},\overline{y}):=(1-a)\bm{\overline{d}}(\overline{x},\overline{z})^2+a\bm{\overline{d}}(\overline{z},\overline{y})^2-a(1-a)\bm{\overline{d}}(\overline{x},\overline{y})^2,
\end{align}
we have the right hand side of \eqref{m step 1} equal to $\overline{F}_a(\overline{z},\overline{x},\overline{y})$. Hence
$$F_a(z,x,y)\geq \overline{F}_a(\overline{z},\overline{x},\overline{y}).$$
On the other hand, applying the CAT(0) inequality to $\triangle\overline{x}\,\overline{y}\,\overline{z}$ and Remark \ref{CAT(0) implies desired inequality} along with Remark \ref{Universal Cover Distance} gives us $$\overline{F}_a(\overline{z},\overline{x},\overline{y})\geq \bm{\overline{d}}(\overline{z},\overline{w}_i(a))^2\geq \bm{d}(z,\gamma_i(a))^2.$$
    The proof is now completed.
\end{proof}

\subsubsection{Proof of Theorem \ref{theorem:short time L1 bound}}

Recall
\begin{align*}\mathcal{L}_1(t,x_0,x,x_0',x')&=\int_0^t ds\int_{M^2}dzdz' P_{t-s}(x_0,z)P_s(z,x)P_{t-s}(x_0',z')P_s(z',x')\bm{G}_{\alpha,\rho}(z,z')\\
&\leq G_t(x,x_0)G_t(x',x_0')\int_0^t ds\int_{M^2}dzdz'  G_{t,x_0,x}(s,z) G_{t,x_0',x'}(s,z)\bm{G}_{\alpha,\rho}(z,z').
\end{align*}
As before, we can decompose the time integral into two parts $\int_0^t=\int_0^{t/2}+\int_{t/2}^t.$ We claim
\begin{align}\label{short time key bound}
\int_{M^2}dzdz'  G_{t,x_0,x}(s,z) G_{t,x_0',x'}(s,z)\bm{G}_{\alpha,\rho}(z,z')
\leq  k_2(s)\leq C_M(1+s^{\frac{2\alpha-d}{2}}),\quad \mathrm{for\ all}\ s\in (0,t/2).
\end{align}
Then by the symmetry between $s$ and $t-s$, and that between $x, x_0$ and $x', x_0'$, we can conclude 
$$\int_0^t ds\int_{M^2}dzdz'  G_{t,x_0,x}(s,z) G_{t,x_0',x'}(s,z)\bm{G}_{\alpha,\rho}(z,z')
\leq 2C_M\int_0^{\frac{t}{2}}k_2(s)ds\leq 2C_M \int_0^tk_2(s)ds,$$
which, together with \eqref{short time key bound},  gives the desired bound in Theorem \ref{theorem:short time L1 bound}.

\bigskip
In what follows we establish \eqref{short time key bound}.   Recall the decomposition in \eqref{GG bound by Xi f}, the rest of the proof is divided into two steps.

\bigskip

{\noindent\it Step 1:  Terms not involving $\Xi(*)$ or $\Xi(*')$.}

  The statement holds trivially for $C_H$. The terms of the form {$C_Hf$} can be treated similarly to $I_2$ in the proof of Theorem \ref{theorem:large time L1 bound}. 
    This leaves us with the term $f(*)f(*')$, which we compute below:
    \begin{align}\label{est: ff small t}
        f(*)f(*')=&(2\pi (t-s))^{-d}e^{-\frac{\bm{d}(x_0,z)^2}{2(t-s)}}e^{-\frac{\bm{d}(x_0',z')^2}{2(t-s)}}+(2\pi s)^{-d}e^{-\frac{\bm{d}(z,x)^2}{2s}}e^{-\frac{\bm{d}(z',x')^2}{2s}}\nonumber\\
        &+(2\pi)^{-d}s^{-\frac{d}{2}}(t-s)^{-\frac{d}{2}}\left(e^{-\frac{\bm{d}(x_0,z)^2}{2(t-s)}}e^{-\frac{\bm{d}(z',x')^2}{2s}}+e^{-\frac{\bm{d}(x_0',z')^2}{2(t-s)}}e^{-\frac{\bm{d}(z,x)^2}{2s}}\right).
    \end{align}
    The second term in the above is the same as $I_1(s)$ in the proof of Theorem \ref{theorem:large time L1 bound}, and thus upper bounded by $C_M(s^{\frac{2\alpha-d}{2}}+1)$.  In addition, since we assumed $s<t/2$ (or, equivalently, $t>2s$) together with the fact that $2\alpha-d<0$, so does the first when taking the supremum over $t>2s$.\\
    Finally, we estimate the third term in \eqref{est: ff small t}. It suffices to show
     \begin{align}\label{est: mix kernel covariance}\sup_{t\geq 2s}\sup_{x_0,x'\in M}\iint_{M^2}dzdz'(2\pi)^{-d}s^{-\frac{d}{2}}(t-s)^{-\frac{d}{2}}e^{-\frac{\bm{d}(x_0,z)^2}{2(t-s)}}e^{-\frac{\bm{d}(z',x')^2}{2s}}\bm{d}(z,z')^{2\alpha-d}\leq C_M(s^{\frac{2\alpha-d}{2}}+1).\end{align}
    For this purpose, observe that for $s\in [0,t/2)$, we have $\frac{t}{2}<t-s$, which implies $(t-s)^{-\frac{d}{2}}e^{-\frac{\bm{d}(x_0,z)}{2(t-s)}}\leq (t/2)^{-\frac{d}{2}}e^{-\frac{\bm{d}(x_0,z)}{2t}}$. Hence
    \begin{align}\label{weird cross term}
        &\iint_{M^2}dzdz'(2\pi)^{-d}s^{-\frac{d}{2}}(t-s)^{-\frac{d}{2}}e^{-\frac{\bm{d}(x_0,z)^2}{2(t-s)}}e^{-\frac{\bm{d}(z',x')^2}{2s}}\bm{d}(z,z')^{2\alpha-d}\nonumber\\
       & \leq \int_M dz(t/2)^{-\frac{d}{2}}e^{-\frac{\bm{d}(x_0,z)}{2t}} (2\pi)^{-d} \int_M dz' s^{-\frac{d}{2}}e^{-\frac{\bm{d}(z',x')^2}{2s}}\bm{d}(z,z')^{2\alpha-d}\nonumber\\
      &  \leq C_{M}
      \sup_{z\in M}\left\{ \int_M dz' s^{-\frac{d}{2}}e^{-\frac{\bm{d}(z',x')^2}{2s}}\bm{d}(z,z')^{2\alpha-d}\right\}.
    \end{align}
    Here we have used Remark \ref{rk: turn to eclidean integral} for the second inequality.
    For the spatial integral in \eqref{weird cross term}, we apply the decomposition for $\delta=i_M/8$, 
    \begin{align}\label{est: decompose M to two parts}\int_M=\int_{B(x',\delta)\cup B(z,\delta)}+\int_{M\setminus[B(x',\delta)\cup B(z,\delta)]}.\end{align}
    For the second integral above, since $z'\notin B(x',\delta)\cup B(z,\delta)$, the integrand is bounded by $\delta^{2\alpha-d}s^{-\frac{d}{2}}e^{-\frac{\delta^2}{2s}}$, and so does the integral thank to the fact that $M$ is compact.\\
    For the first integral, 
    we divide by cases according to $\bm{d}(x',z)\geq \frac{5i_M}{16}$ and $\bm{d}(x',z)< \frac{5i_M}{16}$.\\
    {\it Case 1:} $\bm{d}(x',z)\geq \frac{5i_M}{16}$. \quad In this case,  $B(x',\delta)\cap B(z,\delta)=\emptyset$. Hence $$z'\in B(x',\delta)\implies \bm{d}(z,z')^{2\alpha-d}<(i_M/16)^{2\alpha-d},$$ while $$z'\in B(z,\delta)\implies e^{-\frac{\bm{d}(x',z')^2}{2s}}\leq e^{-\frac{(i_M/16)^2}{2s}}.$$
By Remark \ref{rk: turn to eclidean integral}, we  have
\begin{align}\label{est: easy term case 1}
  &\int_{B(x',\delta)\cup B(z,\delta)}dz' s^{-\frac{d}{2}}e^{-\frac{\bm{d}(z',x')^2}{2s}}\bm{d}(z,z')^{2\alpha-d}\nonumber\\
 \leq &\int_{B(z,\delta)}dz' s^{-\frac{d}{2}}e^{-\frac{\bm{d}(z',x')^2}{2s}}\bm{d}(z,z')^{2\alpha-d}+\int_{B(x',\delta)}dz' s^{-\frac{d}{2}}e^{-\frac{\bm{d}(z',x')^2}{2s}}\bm{d}(z,z')^{2\alpha-d}\nonumber\\
\leq& \int_{B(z,\delta)}dz's^{-\frac{d}{2}}e^{-\frac{(i_M/16)^2}{2s}}\bm{d}(z,z')^{2\alpha-d}+\int_{B(x',\delta)}dz' s^{-\frac{d}{2}}e^{-\frac{\bm{d}(z',x')^2}{2s}}(i_M/16)^{2\alpha-d}\nonumber\\
\leq & C_\alpha s^{-\frac{d}{2}}e^{-\frac{(i_M/16)^2}{2s}}+ C_M(i_M/16)^{2\alpha-d}.
\end{align}

    {\it Case 2:} $\bm{d}(x',z)< \frac{5i_M}{16}$. \quad We have for $z'\in B(x',\delta)\cup B(z,\delta)$, $$\bm{d}(x',z')<\frac{7i_M}{16}< \frac{i_M}{2},$$
    which implies
    $B(x',\delta)\cup B(z,\delta)\subset B(x',\frac{i_M}{2}).$ We then apply $$\int_{B(x',\delta)\cup B(z,\delta)}\leq \int_{B(x',\frac{i_M}{2})}$$
    and take normal coordinates at $x'=0$, changing all distance functions to Euclidean distances following the same considerations as used in treating $J_1(s)$  in the proof of Theorem \ref{theorem:large time L1 bound}. 
    We then obtain
    \begin{align}\label{est: easy term case 2}
    &\int_{B_{\R^d}(0,\frac{i_M}{2})}dz' s^{-\frac{d}{2}}e^{-\frac{\abs{z'}^2}{2s}}\abs{z-z'}^{2\alpha-d}\nonumber\\
   &\leq \int_{\R^d}dz' s^{-\frac{d}{2}}e^{-\frac{\abs{z'}^2}{2s}}\abs{z-z'}^{2\alpha-d}\nonumber\\
   &\leq\int_{\R^d}dz's^{-\frac{d}{2}}e^{-\frac{\abs{z'}^2}{2s}} \abs{z'}^{2\alpha-d}\leq Cs^{\frac{2\alpha-d}{2}},
    \end{align}
    where the last equality is obtained by a change of variable $w=z'/\sqrt{s}$.\\
    Putting together the considerations from \eqref{est: decompose M to two parts} to \eqref{est: easy term case 2}, we conclude 
    \begin{align}\label{est: int Gaussian and convariance}\int_Mdz' s^{-\frac{d}{2}}e^{-\frac{\bm{d}(z',x')^2}{2s}}\bm{d}(z,z')^{2\alpha-d}\leq C_{M,\delta} s^{-\frac{d}{2}}[e^{-\frac{\delta^2}{2s}}+e^{-\frac{(i_M/16)^2}{2s}}]+1+s^{\frac{2\alpha-d}{2}}.
    \end{align}
    
    It is clear that the right-hand side of \eqref{est: int Gaussian and convariance} is independent of the choice of $x'$ and upper bounded by $C_M(s^{\frac{2\alpha-d}{2}}+1)$ for a proper choice of $C_M>0$. The proof of \eqref{est: mix kernel covariance} is thus completed. 

\bigskip

{\noindent\it Step 2. Terms involving $\Xi(*)$ and $\Xi(*')$.}

First recall that $R_M$ is the radius of $M$.  For $x_0,x\in M$, set $n=N_{2R_M}(x_0,x)$ the number of geodesics connecting $x_0$ and $x$ with length no longer than $2R_M$, and denote by $\Gamma_{x_0x}=\set{\gamma_i}_{i=1}^{n}$ the collection of such geodesics. 
    For each $1\leq i\leq n$,  $S^i_{x_0x}$, $B^i_{x_0x}(s/t,\delta)$, and $C^i_{x_0x}(s/t,\delta)$ are introduced in Definition \ref{sausages and restricted balls}. To lighten the notation, we will use $S^i, B^i(s)$ and $C^i(s)$ when there is no confusion.
For $x_0',x'\in M$,  $\Gamma_{x_0'x'}=\set{\eta_j}_{j=1}^m$,   $S^j_{x_0'x'}$, $B^j_{x_0'x'}(s/t,\delta)$, and $C^j_{x_0'x'}(s/t,\delta)$ (as well as ${S^{j}}', B^j(s)'$ and $C^j(s)'$) are defined analogously. 
 Note that  both $n$ and $m$ are uniformly bounded above thanks to Lemma \ref{Finite geodesic count}.   We finally emphasize that since we assume $s\in (0,t/2)$ in \eqref{short time key bound} one has $\frac{t-s}{t}\geq \frac{1}{2}$ which will be used repeatedly below.
    \\
   
    \bigskip
   
By symmetry of the roles between  $x_0,x$ and $x_0',x'$, we need only bound three types of integrals listed below:\begin{enumerate}[label=(\roman*)]
    \item  $\iint_{M^2}dzdz' \Xi(*)\bm{d}(z,z')^{2\alpha-d}$,
    \item $ \iint_{M^2}dzdz' \Xi(*)f(*')\bm{d}(z,z')^{2\alpha-d}$,
    \item $ \iint_{M^2}dzdz' \Xi(*)\Xi(*')\bm{d}(z,z')^{2\alpha-d}$.
    \end{enumerate}
   
    For integral (i), by \eqref{L1 bound for covar} we have
    \begin{align*}
        & \iint_{M^2}dzdz' \Xi(*)\bm{d}(z,z')^{2\alpha-d}\\
        =& \iint_{M^2}dzdz' \left(2\pi\frac{s(t-s)}{t}\right)^{-\frac{d}{2}}\exp\left\{\frac{-F_{s/t}(z,x_0,x)}{2\frac{s(t-s)}{t}}\right\}\bm{d}(z,z')^{2\alpha-d}\\
        \leq & c_{\alpha,M} \int_M dz \left(2\pi\frac{s(t-s)}{t}\right)^{-\frac{d}{2}}\exp\left\{\frac{-F_{s/t}(z,x_0,x)}{2\frac{s(t-s)}{t}}\right\}\\
        \leq & c_{\alpha,M} \sum_{i=1}^n\int_{S^i} dz \left(2\pi\frac{s(t-s)}{t}\right)^{-\frac{d}{2}}\exp\left\{\frac{-F_{s/t}(z,x_0,x)}{2\frac{s(t-s)}{t}}\right\}.
\end{align*}
Thanks to Lemma \ref{behavior of $F$ in sausages} and \eqref{int Gaussian M}, for each $1\leq i\leq n$, the space integral in the summation above is further bounded by
\begin{align*}
          \int_{S^i} dz\left(2\pi\frac{s(t-s)}{t}\right)^{-\frac{d}{2}}\exp\left\{\frac{-\bm{d}(z,\gamma_i(s/t))^2}{2\frac{s(t-s)}{t}}\right\} \leq   c_{M}.
    \end{align*}
     Since the above bound is uniform in $x,x_0,x',x'_0$ and $t\geq s$, we conclude that for all $s>0$,
\begin{align}\label{small time first term}
\sup_{t\geq 2s}\sup_{x_0,x,x_0',x'\in M}\iint_{M^2}dzdz' \Xi(*)\bm{d}(z,z')^{2\alpha-d}\leq C_M.
\end{align}

\bigskip

    For integral (ii),  since $s\in (0,t/2)$ we have
    $$(2\pi(t-s))^{-\frac{d}{2}}e^{-\frac{\bm{d}(x_0,z)^2}{2(t-s)}}\leq (\pi t)^{-\frac{d}{2}}e^{-\frac{\bm{d}(x_0,z)^2}{2t}}.$$ 
   Recalling the defintion of $f(*)$ in \eqref{GG bound by Xi f},   integral (ii) is bounded above by
   \begin{align}\label{short time key term (2)} \iint_{M^2}dzdz' \Xi(*)\left[(2\pi s)^{-\frac{d}{2}}e^{-\frac{\bm{d}(z',x')^2}{2s}}+(\pi t)^{-\frac{d}{2}}e^{-\frac{\bm{d}(x_0',z')^2}{2t}}\right]\bm{d}(z,z')^{2\alpha-d}.\end{align}
    An estimate of the Gaussian term without $s$ is straightforward, and can be obtained as follows,
    \begin{align*}
        & \iint_{M^2}dzdz' \Xi(*)(\pi t)^{-\frac{d}{2}}e^{-\frac{\bm{d}(x_0',z')^2}{2t}}\bm{d}(z,z')^{2\alpha-d}\\
        =&\int_M dz' (\pi t)^{-\frac{d}{2}}e^{-\frac{\bm{d}(x_0',z')^2}{2t}}\int_M dz\Xi(*)\bm{d}(z,z')^{2\alpha-d}\\
        \leq& \int_M dz' (\pi t)^{-\frac{d}{2}}e^{-\frac{\bm{d}(x_0',z')^2}{2t}}\left(\sum_{i=1}^n \int_{S^i}\right)dz \Xi(*)\bm{d}(z,z')^{2\alpha-d}\\
       \leq & C_M\sup_{z'\in M}\left\{\left(\sum_{i=1}^n \int_{S^i}\right)dz\,\Xi(*)\bm{d}(z,z')^{2\alpha-d} \right\}.
    \end{align*}
 Here we have used \eqref{int Gaussian M} for the last step.
   To proceed, we apply lemma \ref{behavior of $F$ in sausages} to $\Xi(*)$ and estimate in \eqref{est: int Gaussian and convariance} in order to obtain for each $1\leq i\leq n$, \begin{align*}
        \int_{S^i}dz \Xi(*)\bm{d}(z,z')^{2\alpha-d}&\leq \int_{S^i}dz \left(2\pi\frac{s(t-s)}{t}\right)^{-\frac{d}{2}}\exp\left\{\frac{-\bm{d}(z,\gamma_i(s/t))^2}{2\frac{s(t-s)}{t}}\right\}\bm{d}(z,z')^{2\alpha-d}\\
        &\leq \int_{M}dz (\pi s)^{-\frac{d}{2}}e^{-\frac{\bm{d}(z,\gamma_i(s/t))^2}{2s}}\bm{d}(z,z')^{2\alpha-d}\\
        &\leq C_M(1+s^{\frac{2\alpha-d}{2}}).
    \end{align*}
We thus have,
 \begin{align}\label{est: short time key term 2-1}
         \sup_{t\geq 2s}\sup_{x_0,x,x_0',x'\in M}\iint_{M^2}dzdz' \Xi(*)(\pi t)^{-\frac{d}{2}}e^{-\frac{\bm{d}(x_0',z')^2}{2t}}\bm{d}(z,z')^{2\alpha-d}\leq C_M(1+s^{\frac{2\alpha-d}{2}}).
\end{align}
    


    For the Gaussian term with $s$ in \eqref{short time key term (2)}, Remark \ref{sausages cover $M$} gives us \begin{align*}
        & \iint_{M^2}dzdz' \Xi(*)(2\pi s)^{-\frac{d}{2}}e^{-\frac{\bm{d}(z',x')^2}{2s}}\bm{d}(z,z')^{2\alpha-d}\\
        \leq &  \left(\sum_{i=1}^n \int_{S^i}\right)dz\left(\int_{B(x',\delta)}+\int_{B(x',\delta)^c}\right)dz'\Xi(*)(2\pi s)^{-\frac{d}{2}}e^{-\frac{\bm{d}(z',x')^2}{2s}}\bm{d}(z,z')^{2\alpha-d}\\
        =& \left(\sum_{i=1}^n \iint_{S^i\times B(x',\delta)}+\sum_{i=1}^n\iint_{S^i\times B(x',\delta)^c}\right)dzdz'\Xi(*)(2\pi s)^{-\frac{d}{2}}e^{-\frac{\bm{d}(z',x')^2}{2s}}\bm{d}(z,z')^{2\alpha-d}.
    \end{align*}
    Using Lemma \ref{behavior of $F$ in sausages} for $\Xi(*)$, it is clear that one can treat the integrals in the second summation similarly to $J_2(s)$ in the proof of Theorem \ref{theorem:large time L1 bound}, which leads to an upper bound by  ${C_M}{s^{-d/2}}e^{-\frac{\delta^2}{s}}$.
    For the first summation, applying again Lemma \ref{behavior of $F$ in sausages} to $\Xi(*)$ gives
    \begin{align}\label{proof of th12 (ii)1}
        &\iint_{S^i\times B(x',\delta)} \Xi(*)(2\pi s)^{-\frac{d}{2}}e^{-\frac{\bm{d}(z',x')^2}{2s}}\bm{d}(z,z')^{2\alpha-d}dzdz'\nonumber\\
        \leq & \left(\iint_{B^i(s)\times B(x',\delta)}+\iint_{C^i(s)\times B(x',\delta)}\right) \Xi(*)(2\pi s)^{-\frac{d}{2}}e^{-\frac{\bm{d}(z',x')^2}{2s}}\bm{d}(z,z')^{2\alpha-d}dzdz'\nonumber\\
        \leq & \iint_{B(\gamma_i(s/t),\delta)\times B(x',\delta)} (\pi s)^{-d}e^{-\frac{d(z,\gamma_i(s/t))^2}{2s}}e^{-\frac{d(z'x')^2}{2s}}\bm{d}(z,z')^{2\alpha-d}dzdz'\nonumber\\
        &+\iint_{M\times B(x',\delta)} (\pi s)^{-d}e^{-\frac{\delta^2}{2s}}e^{-\frac{d(z'x')^2}{2s}}\bm{d}(z,z')^{2\alpha-d}dzdz'.
    \end{align}
    Note that have chosen $\delta=i_M/8$, the first  integral (and second, respectively) on the right-hand side of \eqref{proof of th12 (ii)1} can be treated in the same as $J_1(s)$ (and $J_2(s)$, respectively) in the proof of Theorem \ref{theorem:large time L1 bound}. We therefore conclude that 
    \begin{align*}
\iint_{S^i\times B(x',\delta)} \Xi(*)(2\pi s)^{-\frac{d}{2}}e^{-\frac{\bm{d}(z',x')^2}{2s}}\bm{d}(z,z')^{2\alpha-d}dzdz'\leq C_M(1+s^{\frac{2\alpha-d}{2}}).
    \end{align*}
    Hence 
     \begin{align}\label{est: short time key term 2-2}
         \sup_{t\geq 2s}\sup_{x_0,x,x_0',x'\in M}\iint_{M^2}dzdz' \Xi(*)(2\pi s)^{-\frac{d}{2}}e^{-\frac{\bm{d}(x_0',z')^2}{2s}}\bm{d}(z,z')^{2\alpha-d}\leq C_M(1+s^{\frac{2\alpha-d}{2}}).
\end{align}

Collecting \eqref{est: short time key term 2-1} and \eqref{est: short time key term 2-2}, we have
\begin{align}\label{est: short time key term 2}
    \sup_{t\geq 2s}\sup_{x_0,x,x_0',x'\in M}  \iint_{M^2}dzdz' \Xi(*)f(*')\bm{d}(z,z')^{2\alpha-d}\leq C_M(1+s^{\frac{2\alpha-d}{2}}).
\end{align}

    Finally, for integral (iii), Remark \ref{sausages cover $M$} implies
    $$\iint_{M\times M}\leq \sum_{i=1}^n\sum_{j=1}^m \iint_{S^i\times {S^{j}}'}=\sum_{i=1}^n\sum_{j=1}^m \iint_{B^i(s)\times B^j(s)'}+\iint_{B^i(s)\times C^j(s)'}+\iint_{C^i(s)\times B^j(s)'}+\iint_{C^i(s)\times C^j(s)'}.$$
    For each summand, we first apply lemma \ref{behavior of $F$ in sausages} to $\Xi(*)$ and $\Xi(*')$, then each term can be estimated similarly to $J_1$(s), $J_2(s)$, $J_3(s)$ and $J_4(s)$ in proof of Theorem \ref{theorem:large time L1 bound}. We thus have
    \begin{align}\label{est: short time key 3}
 \sup_{t\geq 2s}\sup_{x_0,x,x_0',x'\in M}\iint_{M^2}dzdz' \Xi(*)\Xi(*')\bm{d}(z,z')^{2\alpha-d}\leq C_M(1+s^{\frac{2\alpha-d}{2}}).
    \end{align}
    Combining \eqref{small time first term}, \eqref{est: short time key term 2} and \eqref{est: short time key 3}, the proof is thus completed.

\subsection{Upper bound for $\mathcal{L}_n$}
Combining Theorem \ref{theorem:large time L1 bound} and Theorem \ref{theorem:short time L1 bound}, we have the following. Define for all $s>0$
\begin{align}\label{kappa}
k(s):=k_1(s)+k_2(s).
\end{align}
\begin{theorem}\label{theorem:combined L1 bound}
    Suppose $M$ has non-positive sectional curvature. Then for any $t\in(0,\infty)$, $x_0,x,x_0',x'\in M$, $$\mathcal{L}_1(t,x_0,x,x_0',x')\leq (C_L+C_S)G_t(x_0,x)G_t(x_0',x')\left(\int_0^t k(s)ds\right).$$
\end{theorem}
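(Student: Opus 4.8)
The statement is obtained by simply patching together the two estimates of $\mathcal{L}_1$ established above, so I do not expect any genuine difficulty here: all the real work was done in Theorems \ref{theorem:large time L1 bound} and \ref{theorem:short time L1 bound}, and what remains is an assembly step. Fix once and for all the constant $\varepsilon>0$ appearing in Theorem \ref{theorem:large time L1 bound}; the plan is to split according to whether $t\geq\varepsilon$ or $0<t<\varepsilon$ and invoke the appropriate one of the two theorems in each regime.

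First I would record the two elementary monotonicity observations that make the merge work. Since $k_1$ and $k_2$ are both nonnegative, for every $t>0$ we have $\int_0^t k_1(s)\,ds\leq \int_0^t k(s)\,ds$ and $\int_0^t k_2(s)\,ds\leq \int_0^t k(s)\,ds$, with $k=k_1+k_2$ as in \eqref{kappa}; and trivially $C_L\leq C_L+C_S$ and $C_S\leq C_L+C_S$. With these in hand, for $t\geq\varepsilon$ I apply Theorem \ref{theorem:large time L1 bound} to obtain
\[\mathcal{L}_1(t,x_0,x,x_0',x')\leq C_L\,G_t(x_0,x)G_t(x_0',x')\Big(\int_0^t k_1(s)\,ds\Big)\leq (C_L+C_S)\,G_t(x_0,x)G_t(x_0',x')\Big(\int_0^t k(s)\,ds\Big),\]
while for $0<t<\varepsilon$ I instead use the bound of Theorem \ref{theorem:short time L1 bound}, which is valid for all $t>0$, to get
\[\mathcal{L}_1(t,x_0,x,x_0',x')\leq C_S\,G_t(x_0,x)G_t(x_0',x')\Big(\int_0^t k_2(s)\,ds\Big)\leq (C_L+C_S)\,G_t(x_0,x)G_t(x_0',x')\Big(\int_0^t k(s)\,ds\Big).\]
Combining the two regimes yields the asserted inequality for all $t\in(0,\infty)$ and all $x_0,x,x_0',x'\in M$.

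Finally, although not strictly part of the statement, I would note in passing — since it is exactly what Section 3.3 will feed into the iteration for $\mathcal{L}_n$ — that $k_1$ and $k_2$ obey the same polynomial-in-$s$ bound $C_M(1+s^{(2\alpha-d)/2})$ by Theorems \ref{theorem:large time L1 bound} and \ref{theorem:short time L1 bound}, so $k(s)\leq C_M(1+s^{(2\alpha-d)/2})$ as well; because $2\alpha-d>-2$ under our standing hypothesis $\alpha>(d-2)/2$, the exponent $(2\alpha-d)/2$ exceeds $-1$, so $s\mapsto k(s)$ is integrable near $0$ and $\int_0^t k(s)\,ds<\infty$ for every finite $t$. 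The only thing one might worry about is whether the constant $C_L$ from Theorem \ref{theorem:large time L1 bound} degenerates, but $\varepsilon$ is a single prefixed number and $C_L$ depends only on $\varepsilon$ and $M$, so this is a non-issue; in short, there is no real obstacle in this proof.
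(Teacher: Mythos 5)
Your proof is correct and matches the paper's (implicit) reasoning: the paper states Theorem \ref{theorem:combined L1 bound} without a written proof, as an immediate consequence of Theorems \ref{theorem:large time L1 bound} and \ref{theorem:short time L1 bound}, which is exactly the monotonicity-and-patching argument you give. The only small remark is that, since Theorem \ref{theorem:short time L1 bound} is already stated for all $t>0$, the split into $t\geq\varepsilon$ and $t<\varepsilon$ is logically optional -- invoking Theorem \ref{theorem:short time L1 bound} alone, then enlarging the constant and the integrand, would suffice -- but your version is equally valid and mirrors the section's organization.
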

Observe that $C_L+C_S$ does not depend on space arguments, which is essential for inductively bounding $\mathcal{L}_n$. For the same purpose, we will need the following elementary lemma.
\begin{lemma}\label{lemma:hn non-decreasing}
    Define inductively $\set{h_n(t)}_{n\geq 1}$ by $$h_1(t)=\int_0^t k(s)ds,\quad\mathrm{and}\quad h_{n}(t)=\int_0^t h_{n-1}(t-s)k(s)ds,n\geq 2.$$
    Then $h_n$ is non-decreasing for all $n\geq1$.
\end{lemma}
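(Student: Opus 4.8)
The plan is a direct induction on $n$, with a short preliminary step establishing non-negativity of $k$. First I would record that $k(s) = k_1(s) + k_2(s) \geq 0$ for every $s > 0$: in the definition of $k_1(s)$ the kernel $G_s(x,y)$ is a sum of a Gaussian and the non-negative term $C(s\wedge 1)$, hence $G_s \geq 0$, and $d(z,z')^{2\alpha-d} \geq 0$, so the integrand is non-negative; likewise each of the summands $\Xi(*),\Xi(*'),f(*),f(*'),C$ making up $R_{t,x_0,x,x_0',x'}(s,z,z')$ is non-negative, so the integrand defining $k_2(s)$ is non-negative as well. Combined with Dalang's condition $\alpha > (d-2)/2$ — which makes $s^{(2\alpha-d)/2}$ integrable near $0$ and hence, by Theorems \ref{theorem:large time L1 bound} and \ref{theorem:short time L1 bound}, gives $\int_0^t k(s)\,ds < \infty$ — this also shows every $h_n$ is finite: directly for $h_1$, and $h_n(t) \leq \big(\sup_{[0,t]} h_{n-1}\big)\int_0^t k(s)\,ds$ by induction. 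I would also note the trivial fact $h_n(0) = 0$ for all $n$.

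For the base case, $h_1(t) = \int_0^t k(s)\,ds$ is non-decreasing because $k \geq 0$. For the inductive step, assume $h_{n-1}$ is non-decreasing; since $h_{n-1}(0) = 0$, this forces $h_{n-1} \geq 0$ on $[0,\infty)$. Fix $0 \leq t_1 \leq t_2$ and split $\int_0^{t_2} = \int_0^{t_1} + \int_{t_1}^{t_2}$ to obtain
$$h_n(t_2) - h_n(t_1) = \int_0^{t_1}\left[h_{n-1}(t_2-s) - h_{n-1}(t_1-s)\right]k(s)\,ds + \int_{t_1}^{t_2} h_{n-1}(t_2-s)\,k(s)\,ds.$$
On $[0,t_1]$ one has $t_2 - s \geq t_1 - s \geq 0$, so the bracket is non-negative by monotonicity of $h_{n-1}$, and $k \geq 0$; on $[t_1,t_2]$ the factor $h_{n-1}(t_2-s)$ is non-negative and so is $k$. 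Hence both integrals are $\geq 0$, giving $h_n(t_2) \geq h_n(t_1)$ and closing the induction.

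There is no genuine obstacle here — the lemma is elementary and the argument is the standard "monotone integrand plus monotone inductive factor" splitting. The only points needing a moment's attention are the preliminary verification that $k \geq 0$, obtained by unwinding the definitions of $k_1$ and $k_2$ and observing that all the heat-kernel-type building blocks $G_s$, $\Xi$, $f$ are non-negative, and the bookkeeping remark $h_n(0) = 0$, which is exactly what upgrades the inductive hypothesis from "non-decreasing" to "non-negative" so that the second integral above can be controlled.
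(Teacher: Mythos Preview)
Your proof is correct and follows essentially the same inductive approach as the paper: both use non-negativity of $k$ for the base case, and for the inductive step both split $h_n(t_2)$ into a piece over $[0,t_1]$ (handled by monotonicity of $h_{n-1}$) and a tail over $[t_1,t_2]$ (handled by non-negativity of $h_{n-1}$ and $k$). You are more explicit than the paper in verifying $k\geq 0$ and in noting that $h_{n-1}(0)=0$ is what upgrades monotonicity to non-negativity, but the argument is the same.
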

\begin{proof}
    We proceed by induction. 
    The case $n=1$ is true by non-negativity of $k(t)$. 
    Now suppose it holds up to $n$. 
    We then have \begin{align*}
        h_{n+1}(t+\varepsilon)=&\int_0^{t+\varepsilon} h_n(t+\varepsilon-s)k(s)ds\\
        \geq&\int_0^t h_n(t+\varepsilon-s)k(s)ds\\
        \geq&\int_0^t h_n(t-s)k(s)ds=h_n(t).
    \end{align*}
\end{proof}

The following theorem gives the desired estimate for $\mathcal{L}_n.$
\begin{theorem}\label{thm: Ln bound}
    There exits $C>0$ depending only on $\alpha$ and $M$ such that for all $t>0$ and $x_0,x,x_0',x'\in M$, we have \begin{align}\label{bound: L_n}\mathcal{L}_n(t,x_0,x,x_0',x')\leq 2^n C^n G_t(x_0,x)G_t(x_0',x')h_n(t).\end{align}
\end{theorem}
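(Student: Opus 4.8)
The plan is to prove \eqref{bound: L_n} by induction on $n\geq 1$. The base case $n=1$ is immediate from Theorem \ref{theorem:combined L1 bound}, which gives $\mathcal{L}_1\leq(C_L+C_S)\,G_t(x_0,x)G_t(x_0',x')\,h_1(t)$: this is \eqref{bound: L_n} for $n=1$ as soon as $2C\geq C_L+C_S$. I will enlarge $C$ once more below, but its final value will still depend only on $\alpha$ and $M$ (recall $C_L$ depends only on the prefixed $\varepsilon$ and $M$, and $C_S$ only on $\alpha$ and $M$).

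Before the inductive step I would record the following \emph{pointwise-in-$s$} version of the $\mathcal{L}_1$ estimate, which is what the analysis of Sections 3.1--3.2 actually produces: there is $C_1=C_1(\alpha,M)$ such that for all $0<s<t$ and all $x_0,x,x_0',x'\in M$,
\begin{align*}
\iint_{M^2}dzdz'\,G_{t-s}(z,x)G_s(x_0,z)G_{t-s}(z',x')G_s(x_0',z')\,G_{\alpha,\rho}(z,z')\leq C_1\,G_t(x_0,x)G_t(x_0',x')\,k\big(s\wedge(t-s)\big).
\end{align*}
Indeed, by the definition \eqref{def: G_{t,x,y}(s,z)} (and symmetry of $G$ in its two space variables) one has $G_{t-s}(z,x)G_s(x_0,z)=G_t(x_0,x)\,G_{t,x_0,x}(t-s,z)$, so the left side equals $G_t(x_0,x)G_t(x_0',x')\iint_{M^2}G_{t,x_0,x}(t-s,z)G_{t,x_0',x'}(t-s,z')G_{\alpha,\rho}(z,z')\,dzdz'$; when $s>t/2$ the estimate \eqref{short time key bound} bounds the inner integral by $k_2(t-s)$, and when $s<t/2$ the $s\leftrightarrow t-s$ symmetry used in the proof of Theorem \ref{theorem:short time L1 bound} (swapping the roles of $x_0,x$ and $x_0',x'$) bounds it by $k_2(s)$, so in all cases it is $\leq k_2(s\wedge(t-s))\leq k(s\wedge(t-s))$; the constant part $\rho/m_0$ of $G_{\alpha,\rho}$ and the non-Gaussian pieces of the $G$-kernels are absorbed exactly as the $C$- and $Cf$-terms in that proof.

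For the inductive step, assume \eqref{bound: L_n} for $n-1$. Since $\mathcal{L}_n=\mathcal{L}_0\rhd\mathcal{L}_{n-1}$, the definition of $\rhd$ gives
\begin{align*}
\mathcal{L}_n(t,x_0,x,x_0',x')=\int_0^t ds\iint_{M^2}dzdz'\,P_{t-s}(z,x)P_{t-s}(z',x')\,\mathcal{L}_{n-1}(s,x_0,z,x_0',z')\,G_{\alpha,\rho}(z,z').
\end{align*}
Bounding $\mathcal{L}_{n-1}(s,x_0,z,x_0',z')$ by the inductive hypothesis and $P_{t-s}\leq G_{t-s}$ via Lemma \ref{th: heat kernel upper bound} (with $m=1$), and then applying the displayed pointwise estimate, gives
\begin{align*}
\mathcal{L}_n(t,x_0,x,x_0',x')\leq 2^{n-1}C^{n-1}C_1\,G_t(x_0,x)G_t(x_0',x')\int_0^t h_{n-1}(s)\,k\big(s\wedge(t-s)\big)\,ds.
\end{align*}
It remains to check $\int_0^t h_{n-1}(s)\,k(s\wedge(t-s))\,ds\leq 2h_n(t)$. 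Splitting the integral at $t/2$ and substituting $s\mapsto t-s$ on $(t/2,t)$ turns it into $\int_0^{t/2}h_{n-1}(s)k(s)\,ds+\int_0^{t/2}h_{n-1}(t-s)k(s)\,ds$; since $h_{n-1}$ is non-decreasing by Lemma \ref{lemma:hn non-decreasing} and $s\leq t-s$ on $(0,t/2)$, the first term is dominated by the second, so the whole thing is $\leq 2\int_0^{t/2}h_{n-1}(t-s)k(s)\,ds\leq 2\int_0^t h_{n-1}(t-s)k(s)\,ds=2h_n(t)$, using $h_{n-1},k\geq 0$. Hence $\mathcal{L}_n\leq 2^n C^{n-1}C_1\,G_t(x_0,x)G_t(x_0',x')h_n(t)$, which is \eqref{bound: L_n} provided $C\geq C_1$. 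Choosing $C=\max\{C_1,(C_L+C_S)/2\}$ --- still a function of $\alpha$ and $M$ only --- completes the induction.

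I do not expect a genuine obstacle here: all the hard analysis, namely the concentration of the Brownian bridge and the control of $F_a$ via non-positive curvature, lives in the $\mathcal{L}_1$ bounds of Sections 3.1--3.2, and the step above is the classical Chen--Dalang iteration. The only items needing care are the bookkeeping that extracts the pointwise-in-$s$ estimate from \eqref{short time key bound}, the combination of the $s\leftrightarrow t-s$ symmetry with the monotonicity of $h_{n-1}$ that yields the clean recursion $h_n(t)=\int_0^t h_{n-1}(t-s)k(s)\,ds$ up to a harmless factor $2$ per step (absorbed into $2^n$), and tracking that each constant produced depends only on $\alpha$ and $M$.
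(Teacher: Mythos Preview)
Your proposal is correct and follows essentially the same approach as the paper: induction on $n$, the base case from Theorem~\ref{theorem:combined L1 bound}, and for the inductive step bounding $P_{t-s}\leq G_{t-s}$, inserting the hypothesis for $\mathcal{L}_{n-1}$, splitting $\int_0^t$ at $t/2$, using the $s\leftrightarrow t-s$ symmetry $G_{t,x_0,x}(t-s,z)=G_{t,x,x_0}(s,z)$ together with the monotonicity of $h_{n-1}$, and invoking the $\mathcal{L}_1$ analysis for the space integral. Your version is in fact a bit more explicit than the paper's, since you isolate the pointwise-in-$s$ estimate and package the symmetry as $k(s\wedge(t-s))$, which makes the passage from $\int_0^t h_{n-1}(s)k(s\wedge(t-s))\,ds$ to $2h_n(t)$ transparent; the paper compresses these steps into ``the space integral is handled the same as before'' and ``by symmetry.''
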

\begin{proof}
    We again proceed by induction, where the case $n=1$ is the content of Theorem~\ref{theorem:combined L1 bound}. 
    Now suppose it holds up to $n-1$. We thus have \begin{align*}
        \mathcal{L}_n=&\int_0^t ds\iint_{M^2} dzdz' P_{t-s}(z,x)P_{t-s}(z',x') \mathcal{L}_{n-1}(s, x_0,z,x_0',z') \bm{G}_{\alpha,\rho}(z,z')\\
        \leq& (2C)^{n-1}G_t(x_0,x)G_t(x_0',x')\int_0^t ds h_{n-1}(s)\iint_{M^2}dzdz'G_{t,x_0,x}(s,z)G_{t,x_0',x'}(s,z')\bm{G}_{\alpha,\rho}(z,z')
    \end{align*}
    By lemma \ref{lemma:hn non-decreasing} and symmetry of the roles of $s$ and $t-s$ in $G_{t,x_0,x}(s,z)$, we need only to upper bound $$\int_{\frac{t}{2}}^t ds h_{n-1}(s)\iint_{M^2}dzdz' G_{t,x_0,x}(s,z)G_{t,x_0',x'}(s,z')\bm{G}_{\alpha,\rho}(z,z')$$ because $\int_0^{\frac{t}{2}}$ can be treated likewise.
    A change of variables $s=t-s$ shows that the above equals $$\int_0^{\frac{t}{2}}ds h_{n-1}(t-s)\iint_{M^2}dzdz' \bm{G}_{\alpha,\rho}(z,z') G_{t,x,x_0}(s,z)G_{t,x',x_0'}(s,z').$$
    The space integral is handled in large time the same as in Section 4 and small time the same as in Section 5, giving us $$\int_0^{\frac{t}{2}}ds h_{n-1}(t-s)\iint_{M^2}dzdz'\bm{G}_{\alpha,\rho}(z,z') G_{t,x,x_0}(s,z)G_{t,x',x_0'}(s,z')\leq C\int_0^{\frac{t}{2}}h_{n-1}(t-s)k(s)ds.$$
    Adding with the part which starts with $\int_0^{\frac{t}{2}}$ gives \eqref{bound: L_n}. Now recall the definition of $\mathcal{K}_\beta$ in \eqref{K_beta}, the upper bound in \eqref{boudn: K_beta} is a direct consequence of \eqref{bound: L_n} and the definition of $H_\lambda$.
\end{proof}

\section{Well-Posedness and Moment Upper Bound}
We are now ready to prove the well-posedness and moments upper bounds for equation \eqref{PAM}. Recall the iteration procedure outlined at the beginning of Section 3. In particular, equation \eqref{iteration corelation} implies that the existence of an $L^2$-solution to \eqref{PAM} relies on the convergence of the series,
\begin{equation*}
    \mathcal{K}_\beta(t,x,z,x',z')=\sum_{n=0}^\infty \beta^{2n}\mathcal{L}_n(t,x,z,x',z').
\end{equation*}
Now that $\mathcal{L}_n$ is controlled by $h_n$ thanks to Theorem \ref{thm: Ln bound}, we set for any $\lambda>0$,  
$$H_\lambda(t):=\sum_{n=0}^\infty \lambda^{2n}h_n(t).$$

\begin{corollary}\label{cor:K beta bound}
    For any $t>0$ and $x,x_0,x_0',x'\in M$, we have \begin{align}\label{boudn: K_beta}\mathcal{K}_\beta(t,x_0,x,x_0',x')\leq G_t(x_0,x)G_t(x_0',x')H_{2\beta^2 C}(t).\end{align}
\end{corollary}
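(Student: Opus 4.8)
The estimate \eqref{boudn: K_beta} is a direct corollary of Theorem~\ref{thm: Ln bound}, obtained by summing the pointwise bound \eqref{bound: L_n} over $n$. The plan is as follows. Recall from \eqref{K_beta} that
\[
\mathcal{K}_\beta(t,x_0,x,x_0',x')=\sum_{n=0}^{\infty}\beta^{2n}\,\mathcal{L}_n(t,x_0,x,x_0',x'),
\]
and that each $\mathcal{L}_n$ is non-negative, being an iterated integral of the non-negative heat kernels $P_t$ and of $G_{\alpha,\rho}$ (recall that $\rho$ has been taken large enough that $G_{\alpha,\rho}\geq0$; alternatively one works with $|\mathcal{L}_n|$ throughout, since Theorem~\ref{thm: Ln bound} in fact bounds $|\mathcal{L}_n|$). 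Hence the series defines an element of $[0,+\infty]$ and the rearrangements below are justified by monotone convergence.

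First I would insert the bound $\mathcal{L}_n(t,x_0,x,x_0',x')\leq 2^nC^n\,G_t(x_0,x)G_t(x_0',x')\,h_n(t)$ of Theorem~\ref{thm: Ln bound} into each summand, with $C$ the constant produced there. Since the factor $G_t(x_0,x)G_t(x_0',x')$ does not depend on $n$, it may be pulled out of the sum, leaving
\[
\mathcal{K}_\beta(t,x_0,x,x_0',x')\ \leq\ G_t(x_0,x)\,G_t(x_0',x')\sum_{n=0}^{\infty}\bigl(2\beta^2C\bigr)^{n}h_n(t).
\]
It then remains only to recognize the power series on the right-hand side as $H_{2\beta^2C}(t)$, which is immediate from the definition of $H_\lambda$ (up to the convention relating $\lambda$ to the coefficients $(2\beta^2C)^n$ of $h_n(t)$). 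This is precisely \eqref{boudn: K_beta}.

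I do not expect any genuine obstacle here: all of the analysis has already been carried out in Theorems~\ref{theorem:large time L1 bound}, \ref{theorem:short time L1 bound} and \ref{thm: Ln bound}, and the corollary is pure bookkeeping together with the harmless interchange of a non-negative sum with a termwise inequality. The one point worth flagging --- though not needed for the statement itself --- is that \eqref{boudn: K_beta} is informative only when $H_\lambda(t)<+\infty$; this finiteness, which underlies the $L^2$-theory of \eqref{PAM} in Section~4, follows from the local integrability of $k$ established along the way (so that $k\in L^1([0,T])$ for every $T>0$), which forces the iterated convolutions $h_n$ to decay rapidly enough in $n$ for the series defining $H_\lambda$ to converge for every $\lambda>0$.
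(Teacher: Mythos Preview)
Your proposal is correct and matches the paper's approach exactly: the paper's proof is the single sentence ``This follows trivially from the definition of $\mathcal{K}_\beta$ and Theorem~\ref{thm: Ln bound},'' which is precisely the termwise substitution and summation you spell out. Your parenthetical about the indexing convention for $H_\lambda$ is apt --- with $H_\lambda(t)=\sum_n\lambda^{2n}h_n(t)$ the subscript in $H_{2\beta^2C}$ should really be read as $\lambda^2=2\beta^2C$ --- but this is a notational quirk of the paper, not a gap in your argument.
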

\begin{proof}
    This follows trivially from the definition of $\mathcal{K}_\beta$ and Theorem \ref{thm: Ln bound}.
\end{proof}

The following result for $H$ is needed to obtain exponential (in time) moment bounds for the solution $u$.
\begin{lemma}\label{lemma:behavior of H}
    Let $\alpha>\frac{d-2}{2}$, $\lambda>0$.  There exist constants $C, \theta>0$ depending on $\alpha,\lambda$ such that for all $t>0$, $$H_{\lambda}(t)\leq C e^{\theta t}.$$
\end{lemma}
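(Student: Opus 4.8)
The plan is to recognize that $\{h_n\}$ obeys a renewal-type recursion and then run the standard Laplace-transform argument, whose only real input is that Dalang's condition makes $k$ locally integrable. Write $(\mathbf 1 * g)(t)=\int_0^t g(s)\,ds$ and $(g_1*g_2)(t)=\int_0^t g_1(t-s)g_2(s)\,ds$. From the definition of $h_n$ in Lemma~\ref{lemma:hn non-decreasing} one gets, by an immediate induction, $h_n=\mathbf 1 * k^{*n}$ (the $n$-fold convolution of $k$), with the natural convention $h_0\equiv 1$ (consistent with $\mathcal L_0=P_t(x_0,x)P_t(x_0',x')\le G_t(x_0,x)G_t(x_0',x')$). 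Equivalently, $H_\lambda$ solves the \emph{renewal equation} $H_\lambda=\mathbf 1+\lambda^2\,(k*H_\lambda)$. I would make this rigorous via Laplace transforms $\widehat f(\mu):=\int_0^\infty e^{-\mu t}f(t)\,dt$, $\mu>0$.

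\emph{Step 1: control $\widehat k$.} By Theorems~\ref{theorem:large time L1 bound} and~\ref{theorem:short time L1 bound}, $k(s)=k_1(s)+k_2(s)\le C_M\bigl(1+s^{(2\alpha-d)/2}\bigr)$, and Dalang's condition $\alpha>(d-2)/2$ is precisely $(2\alpha-d)/2>-1$, so $s\mapsto s^{(2\alpha-d)/2}$ is integrable near $0$. Hence $k\in L^1_{\mathrm{loc}}(\R_+)$, and
\[
\widehat k(\mu)\le C_M\Bigl(\tfrac1\mu+\Gamma\bigl(\tfrac{2\alpha-d+2}{2}\bigr)\,\mu^{-\frac{2\alpha-d+2}{2}}\Bigr)<\infty\qquad\text{for every }\mu>0,
\]
with $\widehat k(\mu)\to0$ as $\mu\to\infty$. (When $\alpha\ge d/2$ the exponent $(2\alpha-d)/2$ is nonnegative, using Proposition~\ref{Prop: G_alpha} one gets an even better bound on $k$, and the same two properties hold a fortiori.)

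\emph{Step 2: sum the series in Laplace domain.} Since $k\ge0$ and every $h_n\ge0$, Tonelli permits all interchanges. The convolution rule gives $\widehat{h_n}(\mu)=\widehat{\mathbf 1}(\mu)\,\widehat k(\mu)^n=\mu^{-1}\widehat k(\mu)^n$, hence
\[
\widehat{H_\lambda}(\mu)=\sum_{n\ge0}\lambda^{2n}\widehat{h_n}(\mu)=\frac1\mu\sum_{n\ge0}\bigl(\lambda^2\widehat k(\mu)\bigr)^n=\frac{1}{\mu\,\bigl(1-\lambda^2\widehat k(\mu)\bigr)},
\]
which is finite as soon as $\mu=\mu_0$ is chosen large enough that $\lambda^2\widehat k(\mu_0)\le\tfrac12$; such $\mu_0$ exists by Step~1 and depends only on $\alpha,\lambda$ (and $M$).

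\emph{Step 3: upgrade to a pointwise bound via monotonicity.} By Lemma~\ref{lemma:hn non-decreasing} each $h_n$ is non-decreasing, hence so is $H_\lambda$ (a sum of non-negative non-decreasing functions). Therefore, for every $T>0$,
\[
\widehat{H_\lambda}(\mu_0)\ge\int_T^{T+1}e^{-\mu_0 t}H_\lambda(t)\,dt\ge H_\lambda(T)\int_T^{T+1}e^{-\mu_0 t}\,dt=H_\lambda(T)\,\frac{1-e^{-\mu_0}}{\mu_0}\,e^{-\mu_0 T},
\]
so $H_\lambda(T)\le C e^{\theta T}$ with $\theta=\mu_0$ and $C=\mu_0\,\widehat{H_\lambda}(\mu_0)/(1-e^{-\mu_0})$ (the same inequality also shows $H_\lambda(T)<\infty$ for each $T$). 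I do not expect a serious obstacle: this is a textbook renewal/generating-function computation, and the only delicate point, integrability of $k$ near $s=0$, is exactly what Dalang's condition provides; everything else is Tonelli together with the monotonicity already established in Lemma~\ref{lemma:hn non-decreasing}. If one wished to avoid Laplace transforms, iterating the majorant $k(s)\le C_M(1+s^{-\gamma})$ with $\gamma=(d-2\alpha)/2\in[0,1)$ yields Mittag--Leffler-type growth bounds on $h_n$ leading to the same exponential conclusion, but the transform route is cleaner.
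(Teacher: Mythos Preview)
Your proof is correct and follows essentially the same Laplace-transform route as the paper: compute $\widehat{h_n}(\mu)=\mu^{-1}\widehat{k}(\mu)^n$, use the bound $k(s)\le C_M(1+s^{(2\alpha-d)/2})$ together with Dalang's condition to find $\mu_0$ with $\lambda^2\widehat{k}(\mu_0)<1$ so that $\widehat{H_\lambda}(\mu_0)<\infty$, and then convert this to a pointwise exponential bound via the monotonicity of $H_\lambda$. Your Step~3 spells out the monotonicity-to-pointwise passage more explicitly than the paper does, and your identification of the convention $h_0\equiv 1$ is a helpful clarification, but the underlying argument is the same.
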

\begin{proof}
    The proof is taken from \cite[Lemma 2.5]{ChenKim19}.  We have for all $\gamma>0$, 
    \begin{align}\label{lem 26 midstep 1}\int_0^\infty e^{-\gamma t}h_n(t)dt=\frac{1}{\gamma}\left(\int_0^\infty e^{-\gamma t}k(t)dt\right)^n.
    \end{align}
    Theorem \ref{theorem:large time L1 bound} and Theorem \ref{theorem:short time L1 bound} implies 
    $$k(t)\leq C_M(1+t^{\frac{2\alpha-d}{2}}).$$
    Together with our assumption on $\alpha$, the integral on the right-hand side of \eqref{lem 26 midstep 1} is finite and decreases to 0 as $\gamma\uparrow \infty$. Clearly we can select $\theta:=\inf\set{\gamma >0: \int_{\R_+} e^{-\gamma t}k(t)dt<\frac{1}{\lambda^2}}$. This would give us for all $\gamma>\theta$ $$\int_{\R_+}H_\lambda(t)e^{-\gamma t}dt=\int_{\R_+}\sum_{n=0}^{\infty}\lambda^{2n}h_n(t)e^{-\gamma t}dt\leq \frac{1}{\gamma}\sum_{n=0}^{\infty}\lambda^{2n}\left(\int_{\R_+}e^{-\gamma t}k(t)dt\right)^n<\infty.$$
     This together with the fact that $H_\lambda$ is non-decreasing (since $h_n's$ are) implies the desired bound for $H_\lambda$.
\end{proof}
We now fully state the first main result of the paper. Let $\mathcal{B}$ be the Borel $\sigma-$algebra of $M$. For $A\in \mathcal{B}$, $t\geq 0$, define $W_t(A):=W(\1_{[0,t]}(s)\1_{A}(x)).$  Define the filtration $(\mathcal{F}_t)_{t\geq 0}$ by $$\mathcal{F}_t:=\sigma(W_s(A):0\leq s\leq t, A\in \mathcal{B})\vee \mathcal{N},$$ where $\mathcal{N}$ is the collection of $\P-$null sets of $\mathcal{F}$. 
\begin{definition}\label{Adapted mild solution}
    A random field $\set{u(t,x)}_{t\geq 0,x\in M}$ is an It\^{o} mild solution to the Cauchy problem if all the following holds.
    \begin{enumerate}[label=(\roman*)]
        \item Every $u(t,x)$ is $\mathcal{F}_t-$measurable.
        \item $u(t,x)$ is jointly measurable with respect to $\mathrm{B}((0,\infty)\times M)\otimes \mathcal{F}$
        \item For all $(t,x)\in (0,\infty)\times M$, we have $$\E \left[\int_0^tds\iint_{M^2}dzdz' \bm{G}_{\alpha,\rho}(z,z')P_{t-s}(x,z)u(s,z)P_{t-s}(x,z')u(s,z')\right]< \infty$$
        \item $u$ satisfies \eqref{pam:mild}.
    \end{enumerate}
\end{definition}

\begin{theorem}\label{main result1}
    For any $\alpha>\frac{d-2}{2}$ and finite measure $\mu$ on $M$, the Cauchy problem \eqref{PAM} has a random field solution $\set{u(t,x)}_{t>0,x\in M}$ which is $L^p(\Omega)$ continuous for $p\geq 2$ and satisfies the two-point correlation formula $$\E[u(t,x)u(t,x')]=J_1(t,x,x')+ \beta^2\iint_{M^2} \mu(dz)\mu(dz') K_\beta(t,z,x,z',x').$$
    Also the following moment bound holds, where $C=C_L+C_S$, $C',\theta>0$ depending on $\alpha,\beta,C$ and $p$ : $$\E[|u(t,x)|^p]^{\frac{1}{p}}\leq \sqrt{2}J_0(t,x)\left(H_{4\beta C\sqrt{p}}(t)\right)^{\frac{1}{2}}\leq C'J_0(t,x)e^{\theta t}.$$
\end{theorem}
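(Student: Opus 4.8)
The plan is to obtain $u$ as the $L^p(\Omega)$-limit of the Picard iteration $u_0(t,x):=J_0(t,x)$ and $u_{n+1}(t,x):=J_0(t,x)+\beta\int_0^t\!\int_M P_{t-s}(x,y)\,u_n(s,y)\,W(dy,ds)$, and to read all the assertions off from uniform control of the iterates and of their successive increments.

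\textbf{Step 1: the iterates are well defined.} I would first record the basic stochastic-analytic facts for $W=W_{\alpha,\rho}$: the Walsh--It\^{o} integral $\int_0^t\!\int_M Z(s,y)\,W(dy,ds)$ is defined for $(\mathcal F_t)$-predictable $Z$ with $\int_0^t\!\iint_{M^2}|G_{\alpha,\rho}(z,z')|\,\E|Z(s,z)Z(s,z')|\,dz\,dz'\,ds<\infty$, it obeys the It\^{o} isometry, and, via Burkholder--Davis--Gundy together with Minkowski's inequality, $\bigl\|\!\int Z\,dW\bigr\|_p^2\le c_p\int_0^t\!\iint_{M^2}P_{t-s}(x,z)P_{t-s}(x,z')|G_{\alpha,\rho}(z,z')|\,\|Z(s,z)\|_p\|Z(s,z')\|_p\,dz\,dz'\,ds$ with $c_p=O(p)$. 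Since $M$ is compact, Proposition~\ref{Prop: G_alpha} gives $|G_{\alpha,\rho}(z,z')|\le C_M\,d(z,z')^{2\alpha-d}$ when $\alpha<d/2$ (and $|G_{\alpha,\rho}|$ is bounded when $\alpha\ge d/2$), so every estimate of Section~3 -- which was carried out with $d(z,z')^{2\alpha-d}$ in place of $G_{\alpha,\rho}$ -- applies to $|G_{\alpha,\rho}|$ verbatim. An induction on $n$ then shows each $u_n(t,x)$ is $\mathcal F_t$-measurable, jointly measurable, and in $L^p(\Omega)$ for every $p\ge2$, $t>0$, $x\in M$.

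\textbf{Step 2: convergence and identification of the solution.} Set $d_n:=u_{n+1}-u_n$, so $d_0=\beta\int P_{t-\cdot}(x,\cdot)J_0(\cdot,\cdot)\,dW$ and $d_n=\beta\int P_{t-\cdot}(x,\cdot)d_{n-1}(\cdot,\cdot)\,dW$. Unrolling the It\^{o} isometry and substituting $J_0(s,z)J_0(s,z')=\iint\mu(dy)\mu(dy')P_s(z,y)P_s(z',y')$ gives, after the change of variables $s\mapsto t-s$ and the symmetry of $\mathcal L_n$ under swapping endpoints, the exact identity $\E[d_n(t,x)d_n(t,x')]=\beta^{2(n+1)}\iint\mu(dy)\mu(dy')\,\mathcal L_{n+1}(t,x,y,x',y')$; the $L^p$-version is obtained by the same iteration with the isometry replaced by BDG, which turns $\beta^{2(n+1)}$ into $(c_p\beta^2)^{n+1}$. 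Theorem~\ref{thm: Ln bound} then yields $\|d_n(t,x)\|_p^2\le (2c_p\beta^2 C)^{n+1}h_{n+1}(t)$ up to a prefactor controlled by $J_0(t,x)^2$. Because Lemma~\ref{lemma:behavior of H} makes $\sum_m\lambda^{2m}h_m(t)$ finite for every $\lambda>0$, in particular $h_m(t)\le C_\lambda\lambda^{-2m}$, choosing $\lambda$ large gives $\sum_n\|d_n(t,x)\|_p<\infty$, uniformly for $(t,x)$ in compact subsets of $(0,\infty)\times M$; hence $u_n(t,x)\to u(t,x)$ in $L^p(\Omega)$. Passing to the limit in the stochastic integral -- the integrands $P_{t-\cdot}(x,\cdot)u_n$ converge in the relevant Hilbert norm -- shows that $u$ satisfies \eqref{pam:mild}, and the measurability and integrability properties pass to the limit, so $u$ is an It\^{o} mild solution in the sense of Definition~\ref{Adapted mild solution}. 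Uniqueness within this class follows from a Gr\"onwall-type argument applied to the difference of two solutions, again using the bounds of Section~3.

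\textbf{Step 3: continuity, correlation, and the moment bound.} $L^p$-continuity of $(t,x)\mapsto u(t,x)$ on $(0,\infty)\times M$ follows from the continuity of $J_0$ and of $(\tau,x,y)\mapsto P_\tau(x,y)$ together with dominated convergence inside the It\^{o}/BDG estimates, as in the Euclidean theory. For the two-point correlation, applying the It\^{o} isometry to $u$ itself gives $g(t,x,x'):=\E[u(t,x)u(t,x')]=J_1(t,x,x')+\beta^2\!\int_0^t\!\iint_{M^2}P_{t-s}(x,z)P_{t-s}(x',z')G_{\alpha,\rho}(z,z')g(s,z,z')\,dz\,dz'\,ds$; iterating this renewal identity produces the series $g=\sum_{n\ge0}\beta^{2n}\iint\mu(dz)\mu(dz')\,\mathcal L_n(t,x,z,x',z')$, which is the formula of the theorem (with $\mathcal K_\beta$ as in \eqref{K_beta}), the interchange of summation and integration being legitimate since $\mathcal K_\beta\le G_t(x,z)G_t(x',z')H_{2\beta^2 C}(t)<\infty$ by Corollary~\ref{cor:K beta bound} and $\iint\mu(dz)\mu(dz')G_t(x,z)G_t(x',z')<\infty$. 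Finally, for the moment bound one iterates the renewal inequality $\|u(t,x)\|_p^2\le 2J_0(t,x)^2+2c_p\beta^2\!\int_0^t\!\iint_{M^2}P_{t-s}(x,z)P_{t-s}(x,z')|G_{\alpha,\rho}(z,z')|\,\|u(s,z)\|_p\|u(s,z')\|_p\,dz\,dz'\,ds$ coming from $\|J_0+\beta I\|_p^2\le 2J_0^2+2\beta^2\|I\|_p^2$ and BDG; using \eqref{bound: p by G}, the factorized bounds on $\mathcal L_n$ from Theorem~\ref{thm: Ln bound}, and a refinement of the iteration near $s=0$ (following \cite{ChenKim19}) that retains the prefactor $J_0(t,x)$ leads to $\|u(t,x)\|_p^2\le 2J_0(t,x)^2 H_{4\beta C\sqrt p}(t)$ with $C=C_L+C_S$; Lemma~\ref{lemma:behavior of H} then converts this into the claimed bound $\le C'J_0(t,x)e^{\theta t}$.

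\textbf{Main obstacle.} Essentially all of the geometry-dependent difficulty has already been absorbed into the estimates of $\mathcal L_n$ in Section~3, so what remains is bookkeeping of a familiar kind; the two points needing genuine care are (i) retaining the sharp prefactor $J_0(t,x)$, rather than the cruder $\int_M G_t(x,\cdot)\,d\mu$, for a general finite measure $\mu$ and small $t$ -- the coarse bound $\mathcal L_n\lesssim G_t G_t\,h_n$ discards the possibly super-exponentially small decay of $J_0$, so the iteration must be organized to keep a genuine heat-kernel leg attached to $x$, which for nice $\mu$ is harmless (by compactness of $M$ and a Gaussian lower bound for $P_t$) but for measure-valued data requires the refinement of \cite{ChenKim19}; and (ii) propagating the constant $4\beta C\sqrt p$ correctly through the $L^p$ (as opposed to $L^2$) iteration, which is where the $\sqrt p$ growth of the BDG constant enters.
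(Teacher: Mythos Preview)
Your proposal is correct and follows essentially the same route as the paper: the paper's own proof of this theorem is a three-line appeal to the Picard iteration scheme of \cite{ChenThesis,ChenDalangAOP} with the modifications of \cite{ChenKim19} and the $p$-th moment argument of \cite{COV23}, using the $\mathcal L_n$-estimates of Section~3 as input and Lemma~\ref{lemma:behavior of H} for the exponential growth. You have unpacked precisely this scheme, and your identification of the two delicate points---keeping the genuine $J_0(t,x)$ prefactor for measure-valued $\mu$ via the refinement of \cite{ChenKim19}, and tracking the BDG constant $c_p=O(p)$ into the parameter $4\beta C\sqrt{p}$---matches what the cited references actually do.
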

\begin{proof}
    The six-step Picard iteration scheme used in \cite{ChenThesis,ChenDalangAOP} with the modifications presented in \cite{ChenKim19} is usable here to obtain $L^2(\Omega)$ continuity and the correlation formula.
    The same proof as Theorem~1.3 in \cite{COV23} is possible by the above estimates for the first inequality in the $p$-th moment bound. The exponential bound for the $p$-th moment is due to Lemma \ref{lemma:behavior of H}.
\end{proof}

\section{Lower Bound assuming bounded initial condition}
We close our discussion by presenting an exponential lower bound for the second moment of the solution. It is proved under an extra condition that the initial data is given by a bounded measurable function under which one has the Feynman-Kac representation for the second moment of the solution to the parabolic Anderson model.\\

First recall the following spectral decomposition of the heat kernel,
$$P_t(x,y)=\sum_{n=0}^{\infty}e^{-\lambda_n t}\phi_n(x)\phi_n(y),$$
where $\set{\lambda_n}_{n=1}^\infty,0=\lambda_0<\lambda_1\leq \lambda_2...$ are the eigenvalues of $\Laplace_M$ and $\set{\phi_n}_{n=1}^\infty$ the corresponding orthonormal eigenfunctions. The definition of $\bm{G}_\alpha$ in \eqref{def: G_alpha etc} then gives $$\bm{G}_\alpha(x,y)=\sum_{n=1}^{\infty}\lambda_n^{-\alpha}\phi_n(x)\phi_n(y).$$


\begin{theorem}\label{lower bound}
    Assume $\alpha>\frac{d-2}{2}$ and  $\mu(dx)=f(x)dx$, where $f:M\ra \R$ is bounded and $\inf_{x\in M}f(x)\geq \varepsilon>0$. Suppose in addition $\rho>0$. Then there exists a positive constant $c$ such that, $$\E[u(t,x)^2]\geq \varepsilon^2 e^{c\, t},\quad\mathrm{for\ all}\ t>0.$$ 
\end{theorem}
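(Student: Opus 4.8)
The plan is to obtain the bound from the Feynman--Kac representation of the second moment together with Jensen's inequality and one short eigenfunction computation. Since $\mu(dx)=f(x)\,dx$ with $f$ bounded, the second moment of the It\^{o}--Walsh solution admits the representation
\[
\E[u(t,x)^2]=\E_{x,x}\!\left[f(B^1_t)\,f(B^2_t)\,\exp\!\Big(\beta^2\!\int_0^t G_{\alpha,\rho}(B^1_s,B^2_s)\,ds\Big)\right],
\]
where $B^1,B^2$ are two independent Brownian motions on $M$ with transition density $P_s(x,y)$, both started at $x$. First I would record this identity, either by invoking the Feynman--Kac formula for moments of the parabolic Anderson model (which applies here because $f$ is bounded), or, to keep the argument self-contained, by expanding the exponential and matching it term by term against the chaos/series representation of $\E[u(t,x)^2]$ produced in Section~3. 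Finiteness of the expectation of the exponential functional is guaranteed by Theorem~\ref{main result1} (or directly by $G_\alpha$ being bounded below on $M$ together with \eqref{L1 bound for covar}), so the identity is meaningful.

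Next, bounding $f\ge\varepsilon$ pointwise and then applying Jensen's inequality to the convex function $\exp$ --- with Fubini to exchange $\E_{x,x}$ and $\int_0^t ds$, which is legitimate since $G_\alpha$ is bounded below and $\E_{x,x}[G_{\alpha,\rho}(B^1_s,B^2_s)]$ is finite for each $s$ --- gives
\[
\E[u(t,x)^2]\ \ge\ \varepsilon^2\exp\!\Big(\beta^2\!\int_0^t\E_{x,x}\!\big[G_{\alpha,\rho}(B^1_s,B^2_s)\big]\,ds\Big).
\]
I would then evaluate the inner expectation via the spectral expansion $G_{\alpha,\rho}(z,z')=\rho/m_0+\sum_{n\ge1}\lambda_n^{-\alpha}\phi_n(z)\phi_n(z')$ recalled at the start of this section. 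Since $(B^1_s,B^2_s)$ has joint density $P_s(x,z)P_s(x,z')$ and $\int_M P_s(x,z)\,dz=1$, $\int_M P_s(x,z)\phi_n(z)\,dz=e^{-\lambda_n s}\phi_n(x)$, Tonelli yields
\[
\E_{x,x}\!\big[G_{\alpha,\rho}(B^1_s,B^2_s)\big]=\frac{\rho}{m_0}+\sum_{n\ge1}\lambda_n^{-\alpha}e^{-2\lambda_n s}\,\phi_n(x)^2\ \ge\ \frac{\rho}{m_0}\,,
\]
the residual sum being nonnegative term by term. Hence $\int_0^t\E_{x,x}[G_{\alpha,\rho}(B^1_s,B^2_s)]\,ds\ge\rho t/m_0$, and combining with the previous display gives $\E[u(t,x)^2]\ge\varepsilon^2 e^{ct}$ with $c=\beta^2\rho/m_0>0$.

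The main obstacle is the first step: rigorously establishing the Feynman--Kac representation of the second moment in the manifold/colored-noise setting. Once it is in hand the remainder is just Jensen plus the one-line spectral computation, and the hypothesis $\rho>0$ enters precisely through the surviving constant $\rho/m_0$. I would also note that one cannot instead simply discard terms in the renewal identity $g(t,x,x)=J_1(t,x,x)+\beta^2\int_0^t\!\iint_{M^2} P_{t-s}(x,z)P_{t-s}(x,z')\,G_{\alpha,\rho}(z,z')\,g(s,z,z')\,dz\,dz'$, because we only assume $\rho>0$ and not that $\rho$ is large enough for $G_{\alpha,\rho}$ to be pointwise nonnegative; the probabilistic representation circumvents this since it requires only the \emph{averaged} quantity $\E_{x,x}[G_{\alpha,\rho}(B^1_s,B^2_s)]$ to be positive, which holds for any $\rho>0$.
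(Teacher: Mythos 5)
Your proposal follows the same route as the paper: Feynman--Kac representation for the second moment, the pointwise lower bound $f\ge\varepsilon$, Jensen's inequality, and the spectral evaluation of $\E_{x,x}[G_{\alpha,\rho}(B_s,B'_s)]=\rho/m_0+\sum_{n\ge1}\lambda_n^{-\alpha}e^{-2\lambda_n s}\phi_n(x)^2$. The only cosmetic difference is that you simply drop the nonnegative remainder sum to obtain $\E_{x,x}[G_{\alpha,\rho}(B_s,B'_s)]\ge\rho/m_0$, whereas the paper integrates it in $s$ and identifies its $t\to\infty$ limit as $\tfrac12 G_{\alpha+1}(x,x)$ (which requires noting $\alpha+1>d/2$ from Dalang's condition); both lead to the same constant $c=\beta^2\rho/m_0$, and your version is slightly more economical. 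Your closing remark about why one cannot crudely discard terms in the renewal identity is a correct and useful observation about the role of $\rho>0$, and it matches the paper's choice of the probabilistic representation.
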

\begin{proof}
    When $f$ is bounded, standard approximation argument gives the Feynman-Kac formula for the second moment (see, e.g., \cite{HN09,HHNT15}) $$\E[u(t,x)^2]=\E_x \left[f(B_s)f(B'_s)\exp\left\{\beta^2\int_0^t \bm{G}_{\alpha,\rho}(B_s,B'_s)ds\right\}\right],$$ where $B,B'$ are two independent Brownian motions on $M$ starting at $x$.
    Under the assumption $\inf_{x\in M}f(x)\geq \varepsilon>0$, the second moment is bounded below by \begin{align}\label{E: lower bound moment 1}\E[u(t,x)^2]\geq \varepsilon^2\E_x \left[\exp\left\{\beta^2\int_0^t \bm{G}_{\alpha,\rho}(B_s,B'_s)ds\right\}\right]\geq \varepsilon^2 \exp\left\{\beta^2\int_0^t \E_x \bm{G}_{\alpha,\rho}(B_s,B'_s)ds\right\},\end{align}
    where the second inequality follows from an application of Jensen's inequality. Recall the definition of $\bm{G}_{\alpha,\rho}$ in \eqref{def: G_alpha etc}, the exponent on the right-hand side of \eqref{E: lower bound moment 1} equal to 
    \begin{align*}\beta^2\left(\frac{\rho t}{m_0}+\E_x\left[\int_0^t \bm{G}_{\alpha}(B_s,B'_s)ds\right]\right).\end{align*}
   In order to compute the expectation above, note that for each $n\geq1$, $\phi_n$ is the eigenfunction of the Laplacian corresponding to eigenvalue $\lambda_n$, hence
   $$\E_x[\phi_n(B_s)]=\E_x[\phi_n(B'_s)]=\phi_n(x)e^{-\lambda_ns}.$$
We therefore have, as $t\uparrow\infty$,
    \begin{align*}
        \E_x\left[\int_0^t \bm{G}_{\alpha}(B_s,B'_s)ds\right]=&\int_0^t \E_x[\bm{G}_{\alpha}(B_s,B'_s)]ds=\int_0^t \sum_{n=1}^{+\infty}\lambda_n^{-\alpha}\E_x[\phi_n(B_s)\phi_n(B'_s)] ds\\
       =&\int_0^t \sum_{n=1}^{+\infty}\lambda_n^{-\alpha}\E_x[\phi_n(B_s)]^2 ds= \int_0^t \sum_{n=1}^{+\infty}\lambda_n^{-\alpha}e^{-2\lambda_ns}\phi_n(x)^2 ds\\
        =& \sum_{n=1}^{+\infty} \frac{1-e^{-2\lambda_nt}}{2\lambda_n^{\alpha+1}}\phi_n(x)^2\quad\uparrow\quad
        \sum_{n=1}^{+\infty}\frac{\phi_n(x)^2}{2\lambda_n^{\alpha+1}}=\frac{1}{2}\bm{G}_{\alpha+1}(x,x).
    \end{align*}
    Note that the assumption on $\alpha$ for the well-posedness of equation \eqref{pam:mild} implies $\alpha+1>\frac{d}{2}$, which shows that $\bm{G}_{\alpha+1}(x,x)$ is finite thanks to Proposition \ref{Prop: G_alpha}. Hence, the exponent on the right-hand side of \eqref{E: lower bound moment 1} is of order
    $$\frac{\beta^2\rho}{m_0}t+\frac{\beta^2}{2}\bm{G}_{\alpha+1}(x,x),\quad\mathrm{as}\ t\uparrow\infty.$$
    The proof is thus completed. 
\end{proof}

The exponential lower bound stated in the above theorem is the result of the compactness of $M$. Indeed, a Brownian motion on a compact manifold is ergodic and hence the time average converges to the space avergage:
\begin{align*}
\frac{1}{t}\int_0^t \bm{G}_{\alpha}(B_s,B'_s)ds\ \to\  \frac{1}{m_0^2}\int_{M\times M}\bm{G}_\alpha(x,x')dxdx'=0.
\end{align*}
This is the main intuition that leads to the proof. We believe that the assumption on the initial data is only a technical assumption; we expect that the exponential lower bound still holds for rough initial data. 

\begin{remark}
    Using the fact that the $p$-th moment is lower bounded by the second moment for $p\geq2$, ones also obtains an exponential lower bound for the $p$-th moment, which matches the upper bound proved in the previous sections.
\end{remark}

\begin{remark}
  The argument for the lower bound relies on the specific construction of the covariance function, which allows for an explicit analysis of the action of the heat semigroup on $\bm{G}_\alpha$. It is not clear how this approach extends to more general noises. In contrast, the upper bound depends primarily on Proposition \ref{Prop: G_alpha} from the covariance structure of the noise, and therefore continues to hold for a broader class of noises with similarly behaved covariance functions.
\end{remark}

\printbibliography


\end{document}